\newtheorem{thm}{Theorem}[section]
\newtheorem{lem}[thm]{Lemma}
\newtheorem{cor}[thm]{Corollary}
\newtheorem{prop}[thm]{Proposition}
\newtheorem{defn}{Definition}[section]
\newtheorem{rem}{Remark}[section]
\numberwithin{equation}{section}
\renewcommand{\a}{\alpha}
\renewcommand{\b}{\beta}
\newcommand{\e}{\varepsilon}
\newcommand{\de}{\delta}
\newcommand{\la}{\lambda}
\newcommand{\si}{\sigma}
\newcommand{\Si}{\Sigma}
\newcommand{\De}{\Delta}
\newcommand{\Ga}{\Gamma}
\newcommand{\La}{\Lambda}
\newcommand{\Om}{\Omega}
\def\real{{\mathbb R}}
\def\integer{{\mathbb Z}}
\def\torus{{\mathbb T}}
\def\R{{\mathbb{R}}}
\def\N{{\mathbb{N}}}
\def\Z{{\mathbb{Z}}}
\def\T{{\mathbb{T}}}
\def\C{{\chi}}
\def\M{{\mathcal{M}}}
\def\Cy{{\mathcal{C}}}
\def\D{{\mathcal{D}}}
\title{Hydrodynamic limit for
two-species exclusion processes}
\author{Makiko Sasada \\
\small{Graduate School of Mathematical Sciences,} \\
\small{The University of Tokyo, Komaba, Tokyo 153-8914, Japan} \\
\small{e-mail: sasada@ms.u-tokyo.ac.jp} }
\date{}
\begin{document}
\maketitle

\begin{abstract}
\noindent
We consider two-species exclusion processes on the $d$-dimensional discrete torus taking the effects of exchange, creation and annihilation into account. The model is, in general, of nongradient type. We prove that the (charged) particle density converges to the solution of a certain nonlinear diffusion equation under the diffusive rescaling in space and time. We also prove a lower bound on the spectral gap for the generator of the process confined in a finite volume. 
\footnote{ Tel.: +81-3-5465-7001; Fax: +81-3-5465-7011 (M.Sasada).}
\footnote{
\textit{MSC: primary 60K35, secondary 82C22.}}
\end{abstract}

\noindent
Keywords: hydrodynamic limit; interacting particle systems; two-species exclusion processes

\section{Introduction}
The aim of this paper is to obtain the hydrodynamic behavior of two-species exclusion processes. Our results can be applied to establish the hydrodynamic limit for the evolution of height differences in interfaces governed by the 1-dimensional SOS dynamics.

The two-species exclusion process describes the evolution of a system of mechanically distinguishable particles, say $+$particles and $-$particles moving on a discrete lattice space under the constraint that at most one particle can occupy each site. The state space of the process is given by $\{-1,0,1\}^{\torus_N^d}$ where $\T^d_N$ stands for the $d$-dimensional discrete torus with side-length N and its elements (called configurations) are denoted by $\eta=(\eta(x), x \in \torus_N^d)$, with $\eta(x)=0$ or $1$ or $-1$ depending on whether $x \in \torus_N^d$ is empty or occupied by a $+$particle or a $-$particle, respectively. Each $\pm$ particle moves to a neighboring empty site with the constant jump rate $C_{\pm}>0$, respectively. Two different types of neighboring particles exchange their locations with the constant rate $C_{E} \ge 0$. Also they annihilate simultaneously when they are neighboring with the constant rate $C_{A} \ge 0$, and two different types of particles are created with the constant rate $C_{C} \ge 0$ if two empty sites are neighboring.

In this paper, we consider the case where $C_{A} > 0$ or $C_{C} > 0$, so that the process has a unique conserved quantity $\sum_{x \in \T^N_d}\eta(x)$. We prove the hydrodynamic limit for the profile associated with this quantity and obtain the explicit expression of the diffusion coefficient. We classify the dynamics into three types as the case where $C_{A} > 0$ and $C_{C} > 0$ (Case 1), $C_{A} > 0$ and $C_{C} = 0$ (Case 2) and $C_{A} = 0$ and $C_{C} > 0$ (Case 3) and give proofs separately. For Cases 2 and 3, we assume the gradient condition so far. We can show that all of the hydrodynamic equations of our processes have a diagonal diffusion coefficient matrix, therefore they have unique weak solutions without the smoothness of the diffusion coefficient.

Quastel proved the hydrodynamic limit for two-colored simple exclusion process in \cite{Q}. This process is corresponding to the model with $C_+=C_->0$ and $C_A=C_C=C_E=0$, so it is not included in our cases.

The SOS dynamics describe the evolution of the integer-valued heights of interfaces on the discrete lattice. In the 1-dimensional case, the height difference of SOS dynamics and the configuration of the two-species exclusion process have one-to-one correspondence, see, e.g. \cite{CDFG}.

This paper is organized as follows: In Section 2 we introduce our model and state the main results for three types of models respectively. In Section 3, we give the proof of the main theorem for Case 1. In the proof, we give a spectral gap estimate and characterize the class of closed forms. In Sections 4 and 5, we give the proofs of the main theorems for Cases 2 and 3, respectively. In Section 6, we state the uniqueness results for nonlinear parabolic equations whose diffusion coefficient matrices are diagonal.
 
\section{Model and Main Results}
\noindent
The two-species exclusion process is a Markov process $\eta_t$ on the configuration space $\chi^d_N=\{-1,0,1\}^{\T^d_N}$, where $\torus_N^d = (\integer/N\integer)^d$ is the $d$-dimensional discrete torus. The dynamics are defined by means of an infinitesimal generator $L_N$ acting on functions $f:\chi^d_N \to \R$ as
\begin{displaymath}
(L_Nf)(\eta) = \sum_{b \in (\T_N^d)^*}L_bf(\eta),
\end{displaymath}
where $(\T_N^d)^*$ stands for the set of all directed bonds $b=(x,y)$, i.e., the ordered pairs of $x,y \in \T^d_N$ such that $|x-y|=1$ where $|x-y| = \sum_{1\le i \le d}|x_i-y_i|$ is the sum norm in $\R^d$.
Here, for each bond $b \in (\T^d_N)^*$,
\begin{equation}
 L_bf(\eta) = c_b(\eta)(\pi_b f)(\eta), \label{eq:generator}
\end{equation}
where
\begin{align*}
(\pi_b f)&(\eta) = [1_{\{\eta(x)=1,\eta(y)=0\}} + 1_{\{\eta(x)=0,\eta(y)=-1\}} + 1_{\{\eta(x)=-1,\eta(y)=1\}}](f(\eta^{x,y})-f(\eta)) \\
	&+ 1_{\{\eta(x)=1,\eta(y)=-1\}}(f(\eta^{x=0,y=0})-f(\eta)) + 1_{\{\eta(x)=0,\eta(y)=0\}}(f(\eta^{x=-1,y=1})-f(\eta)),
\end{align*} 
and
\begin{align*}
c_b(\eta) &=C_+1_{\{\eta(x)=1,\eta(y)=0\}} + C_-1_{\{\eta(x)=0,\eta(y)=-1\}}+C_E1_{\{\eta(x)=-1,\eta(y)=1\}} \\
	&+ C_A1_{\{\eta(x)=1,\eta(y)=-1\}} + C_C1_{\{\eta(x)=0,\eta(y)=0\}}.
\end{align*} 
In the above formula, $\eta^{x,y}, \eta^{x=-1,y=1}$ and $\eta^{x=0,y=0} \in \C^d_N$ stand for 
\begin{displaymath}
 \eta^{x,y}(z) = \begin{cases}
	\eta(z)  & \text{if $z \neq x,y$,} \\
	\eta(y) & \text{if $z=x$,} \\
	\eta(x)  & \text{if $z=y$,} 
\end{cases} 
\end{displaymath}
\begin{displaymath}
 \eta^{x=-1,y=1}(z) = \begin{cases}
	\eta(z)  & \text{if $z \neq x,y$,} \\
	-1 & \text{if $z=x$,} \\
	1  & \text{if $z=y$,} 
\end{cases} 
\end{displaymath}
and
\begin{displaymath}
 \eta^{x=0,y=0}(z) = \begin{cases}
	\eta(z)  & \text{if $z \neq x,y$,} \\
	0 & \text{if $z = x$,} \\
	0  & \text{if $z = y$,} \\
\end{cases}
\end{displaymath}
respectively, and we assume that $C_+$ and $C_-$ are positive constants and $C_A,C_C$ and $C_E$ are nonnegative constants.

We will use the following simplified notations
\begin{displaymath}
 \eta^{(x,y)} = \begin{cases}
	\eta^{x,y}  & \text{if $(\eta(x),\eta(y)) = (1,0)$ or $(0,-1)$ or $(-1,1)$,} \\
	\eta^{x=0,y=0} & \text{if $(\eta(x),\eta(y)) = (1,-1)$,} \\
	\eta^{x=-1,y=1}  & \text{if $(\eta(x),\eta(y)) = (0,0)$,} \\
\end{cases}
\end{displaymath}
\begin{displaymath}
\Psi^{x,y}_{a,b}(\eta)=1_{\{\eta(x)=a,\eta(y)=b\}},
\end{displaymath}
and define $r_b$ for $b \in (\T^d_N)^*$ by
\begin{align*}
r_b(\eta) &=\Psi^{x,y}_{1,0}(\eta) + \Psi^{x,y}_{0,-1}(\eta) 
	+ \Psi^{x,y}_{-1,1}(\eta) + \Psi^{x,y}_{1,-1}(\eta)+\Psi^{x,y}_{0,0}(\eta).
\end{align*} 
With these notations, $L_b$ and $\pi_b$ can be rewritten as
\begin{align*}
L_bf(\eta)&=c_b(\eta)(f(\eta^{(x,y)})-f(\eta)), \\
\pi_bf(\eta)&=r_b(\eta)(f(\eta^{(x,y)})-f(\eta)), 
\end{align*}
respectively.

The process is reversible with respect to the following one parameter family of translation invariant product measures $\nu_\rho$.
\begin{defn}
For each fixed $\rho \in [-1,1]$, let $\nu_\rho$ be a product measure on $\C_N^d$ with marginals given by
\begin{align*}
\nu_\rho\{\eta(x)=1\} & = \frac{1-\Phi(\rho)+\rho}{2},  \\
\nu_\rho\{\eta(x)=0\} &= \Phi(\rho), \\
\nu_\rho\{\eta(x)=-1\} & = \frac{1-\Phi(\rho)-\rho}{2}, 
\end{align*}
for all $x \in \T_N^d$, where
\begin{displaymath}
\Phi(\rho)=\begin{cases}
\frac{1-\sqrt{4\b+\rho^2-4\b\rho^2}}{1-4\b} & \text{if $\b \neq \frac{1}{4}$} \\
\frac{1-\rho^2}{2} & \text{if $\b=\frac{1}{4}$} \\
\end{cases}
\end{displaymath}
with $\b=C_C / C_A$.
Especially, if $C_A>0, C_C=0$, then
\begin{align*}
\nu_\rho\{\eta(x)=1\} & = \rho \vee 0,  \\
\nu_\rho\{\eta(x)=0\} &= 1-|\rho|, \\
\nu_\rho\{\eta(x)=-1\} & = |\rho \wedge 0|, 
\end{align*}
and if $C_A=0, C_C>0$, then
\begin{align*}
\nu_\rho\{\eta_x=1\} & = \frac{1+\rho}{2}, \\
\nu_\rho\{\eta_x=0\} &= 0, \\
\nu_\rho\{\eta_x=-1\} & = \frac{1-\rho}{2}. 
\end{align*}
\end{defn}

The index $\rho$ stands for the density of particles with charge, namely $E_{\nu_\rho}[\eta(0)]=\rho$. We will abuse the same notation $\nu_\rho$ for the product measures on the configuration spaces $\chi_N^d$ or $\chi^d=\{-1,0,1\}^{\Z^d}$ on the torus or on the infinite lattice. The expectation with respect to $\nu_\rho$ will be sometimes denoted by 
\begin{displaymath}
\int f(\eta)\nu_{\rho}(d\eta)= \langle f \rangle _{\rho}.
\end{displaymath}

From the definition, our model satisfies the detailed balance condition, namely, for any directed bond $b=(x,y)$, 
\begin{equation}\label{eq:dbc}
c_b(\eta)\nu_{\rho}(\eta) =c_{b^{\prime}}(\eta^{(x,y)})\nu_{\rho}(\eta^{(x,y)})
\end{equation}
holds, where $b^{\prime}=(y,x)$ is the reversed bond of $b$.

Here and after, we call $\mathfrak{f}$ a cylinder function on $\chi^d$ if $\mathfrak{f}$ depends on the configurations only through a finite set of coordinates.
For any directed bond $b=(x,y)$ and cylinder functions $f$, $g$, let us define $\D_b(\nu_{\rho};f,g)$ and $\D_b(\nu_{\rho};f)$ by 
\begin{displaymath}
\D_b(\nu_{\rho};f,g) \ := \  \langle  -(L_b+L_{b^{\prime}})f,g  \rangle _{\rho}
\end{displaymath}
and
\begin{displaymath}
\D_b(\nu_{\rho};f) \ := \D_b(\nu_{\rho};f,f),
\end{displaymath}
where $b^{\prime}=(y,x)$ and $\langle  \cdot, \cdot  \rangle _{\rho}$ stands for the inner product in $L^2(\nu_{\rho})$. The reversibility (\ref{eq:dbc}) implies
\begin{equation}
\D_b(\nu_{\rho};f,g)= \langle c_b(\pi_bf)(\pi_bg) \rangle _{\rho}.
\end{equation}

Let $\tau_x$ be the shift operator acting on the set $A \subset \Z^d$ and cylinder functions $f$ as well as configurations $\eta$ as follows:

\begin{displaymath}
\tau_xA := x + A, \quad \tau_xf(\eta)=f(\tau_x\eta), \quad (\tau_x\eta)(z):=\eta(z-x), \quad z \in \Z^d.
\end{displaymath}
\noindent
For every cylinder function $g:\chi^d \to \real$, consider the formal sum
\begin{displaymath}
\Ga_g := \sum_{x \in \Z^d}\tau_xg
\end{displaymath}
\noindent
which does not make sense but for which the gradient
\begin{displaymath}
\pi\Ga_g = (\pi_{0,e_1}\Ga_g,...,\pi_{0,e_d}\Ga_g)
\end{displaymath}
\noindent
is well defined.

We are now in a position to define the diffusion coefficient. For each $\rho \in [-1,1]$, define
\begin{displaymath}
d(\rho) =\frac{1}{\chi(\rho)}\inf_{g}\D_{0,e}(\nu_{\rho};\eta(0)+\Ga_g) 
\end{displaymath}
where $\inf_{g}$ is taken over all cylinder functions $g$ and $e$ is a unit vector of arbitrary direction. In this formula $\chi(\rho)$ stands for the so-called static compressibility which in our case is equal to
\begin{displaymath}
\chi(\rho)= \langle \eta(0)^2 \rangle _{\rho}- \langle \eta(0) \rangle _{\rho}^2=1-\Phi(\rho)-\rho^2
\end{displaymath}
Notice that $d(\rho)$ does not depend on the choice of a unit vector $e_i, \ 1 \le i \le d$. 

For a probability measure $\mu^N$ on $\C_N^d$, we denote by $\mathbb{P}_{\mu^N}$ the distribution on the path space $D(\R_+,\C_N^d)$ of the Markov process $\eta_t=\{\eta_t(x), x \in \T_N^d \}$ with generator $N^2L_N$, which is accelerated by a factor $N^2$, and the initial measure $\mu^N$. Hereafter $\mathbb{E}_{\mu^N}$ stands for the expectation with respect to $\mathbb{P}_{\mu^N}$.

With these notations our main theorems are stated as follows:
\begin{thm}\label{thm:main1}
Assume $C_A>0$ and $C_C>0$. Let $(\mu^N)_{N \ge 1}$ be a sequence of probability measures on $\C_N^d$ such that the corresponding initial density fields satisfy
\begin{displaymath}
\lim_{N \to \infty} \mu^N[|\frac{1}{N^d}\sum_{x \in \T_N^d}G(\frac{x}{N})\eta(x) - \int_{\T^d=[0,1)^d} G(u)\rho_0(u)du|>\de]=0,
\end{displaymath}
for every $\de>0$, every continuous function $G:\torus^d \to \real$ and some measurable function $\rho_0 : \torus^d \to [-1,1]$. Then, for every $t>0$,
\begin{displaymath}
\limsup_{N \to \infty} \mathbb{P}_{\mu^N}[|\frac{1}{N^d}\sum_{x \in \T_N^d}G(\frac{x}{N})\eta_t(x) - \int_{\T^d} G(u)\rho(t,u)du|>\de]=0,
\end{displaymath}
for every $\de>0$ and every continuous function $G:\torus^d \to \real$, where $\rho(t,u)$ is the unique weak solution of the following nonlinear parabolic equation:
\begin{equation}
\left\{
\begin{aligned}
\partial_{t}\rho(t,u) & = \De(\tilde{d}(\rho(t,u))) \Big( = \sum_{i=1}^d \frac{\partial}{\partial u_i} \big\{ d(\rho(t,u))\frac{\partial\rho}{\partial u_i} (t,u) \big\} \Big) \\
\rho(0,\cdot) & = \rho_0(\cdot), \label{eq:hydro}
\end{aligned}
\right.
\end{equation}
where
\begin{displaymath}
\tilde{d}(\rho)=\int_{-1}^{\rho}d(\gamma)d\gamma .
\end{displaymath}
\end{thm}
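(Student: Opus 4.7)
The plan is to follow the nongradient entropy method of Varadhan and Quastel. I would begin by setting up the empirical density field $\pi^N_t(du) = N^{-d}\sum_{x \in \T_N^d}\eta_t(x)\,\delta_{x/N}(du)$ and establishing tightness of its law in the Skorohod space $D(\R_+,\M(\T^d))$ via the usual relative entropy and Dirichlet form estimates, together with an a priori energy bound. It then suffices to show that every limit point is concentrated on trajectories $\pi_t(du) = \rho(t,u)\,du$ with $\rho$ a weak solution of (\ref{eq:hydro}); uniqueness for such weak solutions, proved in Section~6, then upgrades this to convergence in probability.

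For $G \in C^2(\T^d)$, Dynkin's formula applied to $\pi^N_t(G)$, after summation by parts in space, produces a drift term involving the microscopic charge current $w_{0,e_i}$ across the bond $(0,e_i)$. Since the model is nongradient, $w_{0,e_i}$ cannot be written as $\tau_{e_i} h - h$ for any cylinder function $h$, so the core of the proof is a nongradient decomposition
\begin{displaymath}
w_{0,e_i} \;\approx\; d(\rho)\bigl(\eta(e_i)-\eta(0)\bigr) \;+\; L_N F,
\end{displaymath}
valid in a suitable $L^2(\nu_\rho)$ sense for each fixed $\rho$. To produce this, I would characterize the space of germ closed forms (translation covariant fields $\xi = (\xi_{0,e_i})$ satisfying the discrete exactness/cocycle condition compatible with the three elementary moves) modulo $L_N$-exact forms. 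Since we are in Case~1 with $C_A,C_C > 0$ the only conserved quantity is the total charge, so a Hodge-type argument should show this quotient is one-dimensional, spanned by the class of $\eta(0)$, and the variational formula defining $d(\rho)$ is precisely the orthogonal projection picking out the correct diffusion coefficient.

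The analytic ingredient underlying the replacement is a spectral gap of order $\ell^{-2}$ for the generator restricted to a cube of side $\ell$ with fixed numbers of $+$, $0$ and $-$ particles; this bound, advertised in the abstract, drives the one-block and two-block estimates that let one replace local averages of cylinder functions by their grand canonical expectations at the local empirical density. Combining the decomposition with these estimates, the drift converges to
\begin{displaymath}
\int_0^t \int_{\T^d} \sum_{i=1}^d \partial_{u_i} G(u)\, d(\rho(s,u))\, \partial_{u_i}\rho(s,u)\,du\,ds,
\end{displaymath}
while the martingale quadratic variation is $O(N^{-d})$ and vanishes. Hence any limit point satisfies $\partial_t \rho = \De(\tilde d(\rho))$ in the weak sense, and the uniqueness theorem of Section~6 concludes the proof.

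The main obstacle is the nongradient decomposition itself: the algebra of germ closed forms is more delicate here than in single-species exclusion because the cocycle condition must be consistent simultaneously with $\pm$-particle jumps, $+/-$ exchanges, and pair creation/annihilation. Proving that the resulting quotient is one-dimensional, with generator represented by $\eta(0)$, is what pins down $d(\rho)$ as defined in the statement; everything else, including the spectral gap (by a canonical ensemble/martingale approach) and the replacement lemmas (standard once the gap is in hand), is more routine.
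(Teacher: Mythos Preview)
Your overall architecture is correct and matches the paper's: martingale decomposition, tightness, nongradient replacement via closed forms and a spectral gap, then uniqueness. Two genuine gaps, however.

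First, the quotient of translation-covariant closed forms by exact forms is not one-dimensional: in dimension $d$ it is $d$-dimensional, with one generator per lattice direction (see the constants $(c_i)_{1\le i\le d}$ in the paper's structure theorem for closed forms). Consequently the nongradient decomposition produces a priori a full matrix $D_{i,j}(\rho)$ via
\[
W_{0,e_i} + \sum_{j=1}^d D_{i,j}(\rho)\bigl[\eta(e_j)-\eta(0)\bigr] \in \overline{L\Cy_0},
\]
not a scalar $d(\rho)$. Showing $D(\rho)=d(\rho)I$ is a separate step that your outline omits; the paper handles it by passing to the Green--Kubo representation and exploiting a reflection symmetry $\theta_i$ that flips $W_{0,e_i}$ while fixing $W_{0,e_j}$ for $j\neq i$, forcing the off-diagonal current--current correlations to vanish. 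Without this, the limiting equation would be $\partial_t\rho=\sum_{i,j}\partial_{u_i}(D_{i,j}(\rho)\partial_{u_j}\rho)$, and the uniqueness theorem of Section~6, which is stated for scalar $\phi$, would not apply directly.

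Second, your spectral gap is stated for ``fixed numbers of $+$, $0$ and $-$ particles,'' but in Case~1 ($C_A,C_C>0$) creation and annihilation destroy those three counts individually; only the total charge $\sum_x\eta(x)$ is conserved. The canonical measure is $\nu_\rho(\,\cdot\mid \sum_x\eta(x)=K)$ and the gap must be proved on that single-parameter hyperplane. The paper does this by first comparing to a mean-field generator, then splitting $L^2$ along the $\sigma$-field of the minority-species count $X(\eta)$, handling the orthogonal part via a multispecies exclusion gap and the $X$-measurable part via a one-dimensional birth--death chain with strong drift.
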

The rigorous definition of weak solutions is given in Section 6.
\begin{rem}
If we assume $C_+ + C_- -C_A-2C_E=0$, then our model turns out to be a gradient system. In this case, $d(\rho)=-\frac{\Phi^{\prime}(\rho)}{2}(C_+ - C_-)+\frac{1}{2}(C_+ + C_-)$ holds. In particular, we can compute the diffusion coefficient $d(\rho)$ explicitly from the concrete values of $C_+, C_-$ and $\b$.
\end{rem}

\begin{rem}
Generalized exclusion process with $\kappa=2$ is corresponding to our model with $C_+=C_-=C_A=C_C=1$ and $C_E=0$.
\end{rem}

\begin{thm}\label{thm:main2}
Assume $C_A>0$, $C_C=0$ and the gradient condition $C_+ + C_- -C_A-2C_E=0$. Let $(\mu^N)_{N \ge 1}$ satisfy the same assumption as in Theorem \ref{thm:main1}. Then, for every $t>0$,
\begin{displaymath}
\limsup_{N \to \infty} \mathbb{P}_{\mu^N}[|\frac{1}{N^d}\sum_{x \in \T_N^d}G(\frac{x}{N})\eta_t(x) - \int_{\T^d} G(u)\rho(t,u)du|>\de]=0,
\end{displaymath}
for every $\de>0$ and every continuous function $G:\torus^d \to \real$, where $\rho(t,u)$ is the unique weak solution of the following nonlinear parabolic equation:
\begin{equation}
\left\{
\begin{aligned}
\partial_{t}\rho(t,u) & = \De(P(\rho(t,u))) \Big( = \sum_{i=1}^d \frac{\partial^2}{\partial u_i^2} P(\rho(t,u)) \Big) \\
\rho(0,\cdot) & = \rho_0(\cdot), \label{eq:hydro2}
\end{aligned}
\right.
\end{equation}
where
\begin{equation}
P(\rho)=C_+\rho1_{\{\rho>0\}}-C_-\rho1_{\{\rho<0\}}. \label{eq:diffusion2}
\end{equation}
\end{thm}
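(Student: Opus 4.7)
The plan is to follow the entropy method of Guo--Papanicolaou--Varadhan, substantially simplified here by the gradient assumption $C_+ + C_- - C_A - 2C_E = 0$. The first step is to exhibit the gradient structure explicitly. Enumerating the allowed transitions across a bond $\{x, x+e_i\}$, one checks by direct computation that, precisely under the gradient condition, the net charge current from $x$ to $x+e_i$ admits the representation
\[
W_{x, x+e_i} \;=\; h_x - h_{x+e_i}, \qquad h_x(\eta) := C_+ 1_{\{\eta(x)=1\}} - C_- 1_{\{\eta(x)=-1\}}.
\]
Summing around a site yields $L_N \eta(x) = \sum_{i=1}^d (h_{x+e_i}+h_{x-e_i}-2h_x)$; in other words, $L_N$ acts on the charge observable as a discrete Laplacian of the local function $h$. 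Since in Case 2 the measure $\nu_\rho$ is supported on configurations taking values in $\{0,\mathrm{sgn}(\rho)\}$, a short computation shows that $\langle h_0 \rangle_{\nu_\rho} = P(\rho)$ with $P$ as in \eqref{eq:diffusion2}.

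Second, by Dynkin's formula and summation by parts, the empirical field $\pi^N_t(G) := N^{-d}\sum_{x} G(x/N) \eta_t(x)$ satisfies
\[
\pi^N_t(G) = \pi^N_0(G) + \int_0^t \frac{1}{N^d}\sum_{x} (\Delta_N G)(x/N)\, h_x(\eta_s)\,ds + M^{N,G}_t,
\]
where $M^{N,G}_t$ is a martingale with quadratic variation $O(N^{-d})$ that vanishes in $L^2(\mathbb{P}_{\mu^N})$. The remaining task is to close this equation via the replacement lemma: replace $h_x(\eta_s)$ in the integrand by $P(\bar\eta^{\e N}_s(x))$, where $\bar\eta^\ell(x)$ denotes the empirical charge averaged over a box of radius $\ell$ centred at $x$. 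This is performed in the standard two stages: the one-block estimate replaces $h_x$ by its canonical expectation given the charge in a microscopic box of size $\ell$, and the two-block estimate then replaces the microscopic average $\bar\eta^\ell$ by the macroscopic one $\bar\eta^{\e N}$.

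Once the replacement is complete, standard arguments (tightness of $\pi^N$ in the Skorokhod topology, identification of limit points as concentrated on weak solutions of \eqref{eq:hydro2}, and the uniqueness of such solutions established in Section~6, which applies because $P$ yields a diagonal diffusion matrix) combine to conclude the proof.

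The main obstacle is the one-block estimate, which requires a uniform spectral gap for the generator on each box $B_\ell$ restricted to configurations of fixed total charge, together with an equivalence-of-ensembles of the form $\lim_\ell \langle h_0 \mid \sum_{y \in B_\ell}\eta(y) = k\rangle = P(k/|B_\ell|) + o(1)$. Fortunately, Case 2 is structurally very favourable here: since $C_C = 0$ while $C_A > 0$, no $\pm$ pair can ever be created, so the canonical measure obtained by conditioning $\nu_\rho$ on $\sum_{y \in B_\ell} \eta(y) = k$ is supported on configurations containing only one sign of particle (only $+$'s when $k>0$, only $-$'s when $k<0$). On such a support the finite-box dynamics coincides with the ordinary symmetric simple exclusion process of $|k|$ particles with jump rate $C_+$ (resp.\ $C_-$), for which the classical spectral gap of order $\ell^{-2}$ yields the required estimate uniformly in $k$. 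The equivalence-of-ensembles is then a routine computation producing $P$.
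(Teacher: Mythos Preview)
Your identification of the gradient structure and the martingale decomposition are correct, as is the observation that on single-sign configurations the box dynamics reduces to simple exclusion. The genuine gap is in the replacement step. The GPV entropy method requires a reversible reference measure $\nu_\alpha^N$ with $H(\mu^N\mid\nu_\alpha^N)\le CN^d$, together with entropy production controlling the Dirichlet form. In Case~2 every invariant product measure $\nu_\rho$ is supported on $\{0,1\}^{\T_N^d}$ (if $\rho>0$) or on $\{-1,0\}^{\T_N^d}$ (if $\rho<0$); for generic initial data $\mu^N$ that charges configurations containing both species, the relative entropy $H(\mu^N\mid\nu_\rho^N)$ is $+\infty$ for every $\rho$, and the entropy--Dirichlet inequality is unavailable. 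Correspondingly, the canonical measure $\nu_\rho(\,\cdot\mid\sum_{B_\ell}\eta=k)$ is either undefined (wrong sign of $k$) or does not see the mixed-sign configurations that the process actually visits, so the usual reduction to canonical ensembles in the one-block estimate cannot be carried out. Your spectral-gap remark applies only on the absorbing single-sign component of the hyperplane, not on the full hyperplane $\{\sum_{B_\ell}\eta=k\}$ where the box dynamics is not reversible with respect to any positive measure.

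This degeneracy is exactly why the paper does not use GPV here. Instead it follows Funaki's liquid--solid argument: form the space--time averaged law $\tilde\mu^N=\frac{1}{TN^d}\sum_x\int_0^T\mu_t^N\circ\tau_x^{-1}\,dt$, show tightness on $\{-1,0,1\}^{\Z^d}$, and prove that any limit point $\mu$ is translation-invariant and $L$-stationary. Since $C_C=0$ and $C_A>0$, stationarity forces $\operatorname{supp}\mu\subset\{-1,0\}^{\Z^d}\cup\{0,1\}^{\Z^d}$, and on each piece the translation-invariant stationary measures are mixtures of Bernoulli; the local ergodic theorem then follows by the law of large numbers, and Young measures handle the passage from microscopic to macroscopic density. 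This route bypasses the need for an invariant reference measure of full support and is the substantive new ingredient your sketch is missing.
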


\begin{rem}
Equation (\ref{eq:hydro2}) is the weak or enthalpy formulation of the following two-phases Stefan problem:
\begin{displaymath}
\left\{
\begin{aligned}
\partial_{t}\rho(t,u) & = C_+ \De \rho(t,u) \quad \text{on} \quad \mathcal{L}(t)=\{\rho(t,u)>0\} \\
\partial_{t}\rho(t,u) & = C_- \De \rho(t,u) \quad \text{on} \quad \mathcal{S}(t)=\{\rho(t,u) <0\} \\
0 & = \bold{n} \cdot \Big( C_+ \nabla(\rho(t,u) \vee 0) - C_- \nabla(\rho(t,u) \wedge 0) \Big) \\
 & \quad \quad \quad \text{on} \quad \Sigma(t)=\{\rho(t,u)=0\}\\
\rho(0,\cdot) & = \rho_0(\cdot)
\end{aligned}
\right.
\end{displaymath}
where $\bold{n}$ denotes the unit normal vector on $\Sigma(t)$ directed to $\mathcal{L}(t)$ and $\nabla(\rho(t,u) \vee 0)$ (respectively $\nabla(\rho(t,u) \wedge 0)$) is the limit of the gradient of $\rho \vee 0$ (respectively $\rho \wedge 0$) at $u \in \Sigma(t)$ when approached from $\mathcal{L}(t)$ (respectively $\mathcal{S}(t)$), see \cite{F}.
\end{rem}

\begin{thm}\label{thm:main3}
Assume $C_A=0$, $C_C>0$ and the gradient condition $C_+ + C_- -2C_E=0$. Let $(\mu^N)_{N \ge 1}$ satisfy the same assumption as in Theorem \ref{thm:main1}. Then, for every $t>0$,
\begin{displaymath}
\limsup_{N \to \infty} \mathbb{P}_{\mu^N}[|\frac{1}{N^d}\sum_{x \in \T_N^d}G(\frac{x}{N})\eta_t(x) - \int_{\T^d} G(u)\rho(t,u)du|>\de]=0,
\end{displaymath}
for every $\de>0$ and every continuous function $G:\torus^d \to \real$, where $\rho(t,u)$ is the unique weak solution of the heat equation:
\begin{equation}
\left\{
\begin{aligned}
\partial_{t}\rho(t,u) & = C_E\De \rho(t,u) \Big( = C_E \sum_{i=1}^d \frac{\partial^2}{\partial u_i^2} \rho(t,u) \Big) \\
\rho(0,\cdot) & = \rho_0(\cdot). \label{eq:hydro3}
\end{aligned}
\right.
\end{equation}
\end{thm}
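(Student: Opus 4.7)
The plan is to exploit the gradient structure hidden in the hypothesis $C_+ + C_- - 2C_E = 0$ and then run the standard Guo--Papanicolaou--Varadhan entropy scheme for gradient systems. A direct bond-by-bond calculation, checking each of the nine patterns of $(\eta(x),\eta(y))$, shows that
\[
L_N \eta(x) \;=\; \sum_{y:\,|y-x|=1}\bigl[h(\eta(y)) - h(\eta(x))\bigr], \qquad h(1)=C_+,\ h(0)=0,\ h(-1)=-C_-.
\]
The only nontrivial compatibility to check is that the exchange pattern $(-1,1)$ contributes $2C_E(\eta(y)-\eta(x))$, which agrees with $h(1)-h(-1)=C_++C_-$ precisely when the gradient identity holds. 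The contributions of the creation pattern $(0,0)\mapsto(-1,1)$ cancel by symmetry of the two directed bonds over the undirected pair and so drop out.

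With the current thus written as a discrete gradient, the Dynkin decomposition for the empirical measure $\pi^N_t=N^{-d}\sum_x \eta_t(x)\delta_{x/N}$ tested against $G\in C^2(\T^d)$ becomes, after one summation by parts,
\[
\langle\pi^N_t,G\rangle - \langle\pi^N_0,G\rangle \;=\; \int_0^t \frac{1}{N^d}\sum_{x\in\T_N^d} (\De G)(x/N)\,h(\eta_s(x))\,ds \;+\; M^{G,N}_t + o_N(1),
\]
with $M^{G,N}_t$ a martingale of quadratic variation $O(tN^{-d})$. The classical one-block/two-block (replacement) estimates then allow the substitution $h(\eta_s(x)) \leadsto \bar h(\rho^\ell_x)$ with $\rho^\ell_x=(2\ell+1)^{-d}\sum_{|y-x|\le\ell}\eta_s(y)$ and $\bar h(\rho):=\langle h(\eta(0))\rangle_\rho$. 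A direct computation using the Case-3 marginals of Definition~2.1 gives $\bar h(\rho) = \tfrac12(C_+-C_-) + C_E\rho$, so $\De \bar h(\rho) = C_E\De\rho$, which is exactly the right-hand side of (\ref{eq:hydro3}). Tightness of $\{\pi^N_\cdot\}$ in $D([0,T],\M_1(\T^d))$ is immediate from the above decomposition together with the bound $|h|\le \max(C_+,C_-)$, and uniqueness of weak solutions to the linear heat equation is a special case of the results in Section~6.

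The main obstacle is that the reference measures $\nu_\rho$ are supported on configurations without empty sites, whereas the hypothesis of the theorem allows $\mu^N$ to carry an order-$N^d$ number of $0$'s. Since $C_A=0$ the number of empty sites is nonincreasing, but it need not be macroscopically negligible for any positive time, so the naive entropy bound $H(\mu^N|\nu_\rho)=O(N^d)$ fails. The remedy is to carry out the entropy/Dirichlet-form estimates against an auxiliary three-parameter product family of measures that assigns positive mass to the state $0$, controlled uniformly in $N$, and then show that the associated one-block estimate can be transferred back to $\nu_\rho$ once the local empty-site density has been absorbed into the block averaging (an argument facilitated by the spectral gap lower bound announced in the abstract, applied on each hyperplane of fixed charge). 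Apart from this reference-measure subtlety, the proof proceeds exactly along the gradient template already used for Case~2.
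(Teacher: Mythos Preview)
Your gradient computation and the resulting martingale decomposition are correct and coincide with the paper's starting point. The divergence is in how the functional $h(\eta(x))$ is handled afterwards. You propose a full one-block/two-block replacement $h(\eta(x))\leadsto \bar h(\eta^\ell(x))$ and then use that $\bar h$ is affine. The paper instead writes down the pointwise identity
\[
h(\eta(x)) \;=\; C_E\,\eta(x) \;+\; \tfrac{C_+-C_-}{2}\;-\;\tfrac{C_+-C_-}{2}\,1_{\{\eta(x)=0\}},
\]
so the principal term is already the empirical density and needs no replacement; the constant is killed by $\sum_x\Delta^N H(x/N)=0$; and only the indicator $1_{\{\eta(x)=0\}}$ remains. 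That last piece is shown to vanish by proving that any subsequential limit of the space--time averaged laws $\tilde\mu^N$ is translation invariant and annihilated by $L$, hence (because creation is on but annihilation is off) supported on $\{-1,1\}^{\Z^d}$. No entropy bound, no spectral gap, and no block replacement enter.

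Your proposed remedy for the reference-measure difficulty has a real gap. When $C_A=0$ and $C_C>0$ the transition $(0,0)\to(-1,1)$ has no reverse, so no product measure that charges the state $0$ satisfies detailed balance for this generator, and the usual Dirichlet-form/entropy machinery against such an auxiliary family is not available. More seriously, on any finite box the process restricted to a hyperplane of fixed charge is \emph{not irreducible}: the number of empty sites is nonincreasing, so configurations with different zero-counts lie in distinct communicating classes, and the spectral gap you invoke simply does not exist on that hyperplane (the spectral-gap theorem in the paper is proved only in Case~1, where $C_A>0$ restores irreducibility). The correct way to ``absorb the local empty-site density'' is exactly the paper's direct argument that $E^{\tilde\mu^N}[1_{\{\eta(0)=0\}}]\to 0$; once this is in hand, $h(\eta(x))$ is effectively $C_E\eta(x)$ and the heat equation follows with no replacement lemma at all.
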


\section{Proof of Theorem \ref{thm:main1}}
In this section, we consider Case 1, namely the dynamics with $C_A>0$ and $C_C>0$. For this case, we do not assume the gradient condition, so the system is, in general, nongradient. The strategy of the proof is essentially the same as given for the generalized exclusion process in \cite{KL}. The main step is obtaining the estimate of the spectral gap and the characterization of the closed forms, which are presented in the Subsections 3.4 and 3.5. To guarantee the uniqueness of the weak solution, we show that the diffusion coefficient matrix is diagonal. The method of the proof is also available for the large class of symmetric processes including generalized exclusion process.

\subsection{The Macroscopic Equation}
We start with considering a class of martingales associated with the empirical measure. We take $T>0$ arbitrarily and fix it in the rest of this section. For each smooth function $H:\T^d \to \R$, let $M^{H,N}(t)=M^{H}(t)$ be the martingale defined by
\begin{displaymath}
M^{H}(t)= \langle \pi^N_t, H \rangle  -  \langle \pi^N_0, H \rangle  - \int^t_0 N^2 L_N \langle \pi^N_s, H \rangle ds,
\end{displaymath}
where $\pi^N_t$ stands for the empirical measure associated with $\eta_t$, namely
\begin{equation}\label{eq:empirical}
\pi^N_t(du)=\frac{1}{N^d} \sum_{x \in \T_N^d} \eta_{t}(x) \de_{\frac{x}{N}}(du),\quad  0 \le t \le T, \quad u \in \T^d,
\end{equation}
and $ \langle \pi^N_t, f \rangle $ stands for the integration of $f$ with respect to $\pi^N_t$.

A simple computation shows that the expected value of the quadratic variation of $M^H(t)$ vanishes as $N \uparrow \infty$, and therefore by Doob's inequality, for every $\de>0$, we have
\begin{displaymath}
\lim_{N \to \infty}\mathbb{P}_{\mu^N}[\sup_{0 \le t \le T}|M^H(t)| \ge \de]=0.
\end{displaymath}

A spatial summation by parts permits to rewrite the martingale $M^{H}(t)$ as
\begin{align}
M^{H}(t) &= \langle \pi^N_t, H \rangle  -  \langle \pi^N_0, H \rangle  \nonumber \\
	& - \sum_{i=1}^d\int^t_0 N^{1-d}\sum_{x \in \T^d_N}(\partial_{u_i}^NH)(\frac{x}{N})\tau_xW_{0,e_i}(\eta_s)ds, \label{eq:martingale}
\end{align}
where $W_{0,e_i}$ represents the instantaneous current from $0$ to $e_i$:
\begin{align*}
W_{0,e_i}(\eta) &= C_+ (\Psi^{0,e_i}_{1,0}(\eta)-\Psi^{0,e_i}_{0,1}(\eta) ) 		+ (C_A+2C_E) (\Psi^{0,e_i}_{1,-1}(\eta)-\Psi^{0,e_i}_{-1,1}(\eta) ) \\
	&+ C_- (\Psi^{0,e_i}_{0,-1}(\eta)-\Psi^{0,e_i}_{-1,0}(\eta))
\end{align*}
and $\partial_{u_i}^NH$ represents the discrete derivative of $H$ in the $i$-th direction:
\begin{displaymath}
(\partial_{u_i}^NH)(\frac{x}{N})=N[H(\frac{x+e_i}{N})-H(\frac{x}{N})].
\end{displaymath}

Next we show that the current $W_{0,e_i}$ can be decomposed into a linear combination of the gradients $\{\eta(e_j)-\eta(0), 1 \le j \le d\}$ and a function in the range of the generator $L_N$: $W_{0,e_i} + \sum_{j=1}^d D_{i,j}(\rho)[\eta(e_j) - \eta(0)]=L_N\mathfrak{f}$ for a certain cylinder function $\mathfrak{f}$ and some matrix $D_{i,j}(\rho)$ that depends on the density, see Theorem \ref{thm:keythm} and Corollary \ref{cor:keycor} for more precise statement. 

Denote by $\{D_{i,j}(\rho),1 \le i,j \le d\}$ the unique symmetric matrix such that 
\begin{displaymath}
a^{*}D(\rho)a=\frac{1}{\chi(\rho)}\inf_g \Big\{\sum_{i=1}^d \D_{0,e_i}(\nu_{\rho},a_i\eta(0)+\Ga_g)\Big\},
\end{displaymath}
for every vector $a$ in $\R^d$ where $\inf_{g}$ is taken over all cylinder functions $g$.

For positive integers $l, N,$ a function $H$ in $C^2(\T^d)$ and a cylinder function $\mathfrak{f}$ on $\chi^d$, let
\begin{displaymath}
X^{\mathfrak{f},i}_{N,l}(H,\eta)=N^{1-d}\sum_{x \in \T^d_N}H(\frac{x}{N})\tau_xV^{\mathfrak{f},l}_i(\eta),
\end{displaymath}
where
\begin{displaymath}
V^{\mathfrak{f},l}_i(\eta)=W_{0,e_i}(\eta) + \sum_{j=1}^d D_{i,j}(\eta^l(0))[\eta^l(e_j) - \eta^l(0)]-L_N\mathfrak{f}(\eta),
\end{displaymath}
and
\begin{displaymath}
\eta^l(x)=\frac{1}{(2l+1)^d}\sum_{|y-x| \le l}\eta(y), \quad x \in \T_N^d.
\end{displaymath}
\begin{thm} \label{thm:keythm}
Fix $\rho \in (-1,1)$ arbitrarily. Then, for every function $H$ in $C^2(\T^d)$ and $1 \le i \le d$, we have
\begin{displaymath}
\inf_{\mathfrak{f} \in \Cy}\limsup_{\e \to 0}\limsup_{N \to \infty}\frac{1}{N^d}\log\mathbb{E}_{\nu_{\rho}^N}[\exp\{N^d|\int^T_0 X^{\mathfrak{f},i}_{N,\e N}(H,\eta_s)ds|\}] = 0,
\end{displaymath}
where $\Cy$ stands for the set of cylinder functions on $\chi^d$.
\end{thm}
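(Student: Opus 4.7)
The plan is to apply Varadhan's nongradient method in the form developed in Kipnis--Landim (Chapter~7) for the generalized exclusion process, using as main inputs the spectral gap estimate of Subsection~3.4 and the characterization of closed forms of Subsection~3.5.

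First, I would reduce to an equilibrium variational problem. By the Feynman--Kac formula together with the Rayleigh--Ritz principle, and using the elementary bound $e^{|u|} \le e^{u}+e^{-u}$ to remove the absolute value, the left-hand side is bounded by $T$ times
\[
\sup_{f \ge 0,\, \int f\, d\nu_\rho = 1}
\Big\{ \int X^{\mathfrak{f},i}_{N,\e N}(H,\eta)\, f(\eta)\, d\nu_\rho
      - N^{2-d} \sum_{b \in (\T_N^d)^*} \D_b(\nu_\rho; \sqrt{f}) \Big\}.
\]
The $L_N\mathfrak{f}$ piece inside $V^{\mathfrak{f},\e N}_i$ integrates by parts against $f$ using the detailed balance (\ref{eq:dbc}) and is absorbed into the Dirichlet form with an error that vanishes as $N\to\infty$. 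Thus we may replace $V^{\mathfrak{f},\e N}_i$ by the ``current plus gradient'' part $\tilde V^{\e N}_i := W_{0,e_i} + \sum_j D_{i,j}(\eta^{\e N}(0))[\eta^{\e N}(e_j) - \eta^{\e N}(0)]$.

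Second, I would apply the one-block and two-blocks estimates. Using the smoothness of $H$, a Schwarz-type inequality, and translation invariance, the localized contribution $\int H(x/N) \tau_x \tilde V^{\e N}_i\, f\, d\nu_\rho$ is controlled by first replacing averages on boxes of side $\e N$ by averages on cubes $\Lambda_l$ of fixed side $2l+1$ (\emph{one-block}), then comparing averages on neighboring cubes (\emph{two-blocks}); both steps rest on the equivalence of ensembles and the spectral gap of Subsection~3.4 applied to the canonical ensemble on $\Lambda_l$. After these reductions the problem becomes
\[
\inf_{\mathfrak{f} \in \Cy} \limsup_{l \to \infty} \sup_{|\rho|<1}
\frac{1}{(2l+1)^d}\,
\lan V^{\mathfrak{f},l}_i,\, (-L_{\Lambda_l})^{-1}\, V^{\mathfrak{f},l}_i\ran_{\rho,l} = 0,
\]
where $L_{\Lambda_l}$ is the generator restricted to $\Lambda_l$ and $\lan\cdot\ran_{\rho,l}$ is the canonical ensemble with mean density $\rho$. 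The spectral gap bound $-L_{\Lambda_l} \ge C\, l^{-2}$ on the mean-zero subspace is precisely what keeps the $H^{-1}$-norm of the $(2l+1)^d$-fold spatial average of $V^{\mathfrak{f},l}_i$ of order $1$.

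Third, I would identify the limiting $H^{-1}$ seminorm. Taking the Hilbert-space closure as $l\to\infty$, one obtains a seminorm on germs of local functions modulo the range of the generator. The characterization of closed forms of Subsection~3.5 will identify the orthogonal complement of the exact forms $\{\pi\Ga_g : g \in \Cy\}$ as the one-dimensional span of the charge germ $\eta(0)$; this matches the definition of $D_{i,j}(\rho)$ as the minimizer in the variational formula $a^* D(\rho) a = \chi(\rho)^{-1}\inf_g \sum_i \D_{0,e_i}(\nu_\rho;\, a_i \eta(0)+\Ga_g)$. Taking the near-optimal $g$ as the cylinder function $\mathfrak{f}$ then drives the infimum to zero, which is the claim.

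The main obstacle I expect is Step~3, namely the characterization of closed forms in this two-species setting. Unlike ordinary exclusion, each bond supports five distinct local moves (species-specific hops, pair creation, pair annihilation, and exchange of different species), so the Hilbert space of germ differentials has a richer algebraic structure, and one must show with care that $\eta(0)$ together with the exact forms $\pi\Ga_g$ span the entire space of closed forms. The spectral gap of Subsection~3.4 is similarly nontrivial for the same reason, but can be handled by a standard path-counting or moving-particle lemma adapted to these five moves.
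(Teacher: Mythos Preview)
Your outline is correct and follows essentially the same strategy as the paper: reduce via Feynman--Kac to a variational problem, then show that the canonical $H^{-1}$ variance (your displayed $\inf_{\mathfrak f}\limsup_l\sup$ expression, the paper's (\ref{eq:variance})) vanishes, using the spectral gap and the characterization of closed forms as inputs. The paper in fact defers the entire reduction to Kipnis--Landim, Chapter~7, exactly as you suggest.

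Where your proposal diverges from the paper is in how the two ``obstacles'' are actually resolved. For the closed forms, you anticipate a direct algebraic argument over the five bond moves; the paper instead constructs an explicit bijection between $I^1$-closed forms for this model and $I^2$-closed forms for the generalized exclusion process (GEP), and then invokes the known GEP characterization. This sidesteps entirely the richer differential structure you worry about. For the spectral gap, path-counting alone is not enough here: the paper first compares $L_{\Om_N}$ to a mean-field generator $\tilde L_{\Om_N}^m$, then conditions on the number $X$ of $-$particles; on each leaf the process is a multispecies exclusion (handled by Proposition~\ref{prop:multispectral}), while the induced process on $X$ is a birth--death chain whose gap is obtained via the strongly-asymmetric criterion of Proposition~\ref{prop:asym}. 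The moving-particle lemma enters only to pass from mean-field back to nearest-neighbour. So your high-level plan is right, but the model-specific work in Subsections~3.4--3.5 is more structured than a straightforward adaptation of the standard arguments.
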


The proof of Theorem \ref{thm:keythm} is postponed to the next subsection. This theorem implies the following corollary. For a positive integer $l$ and a function $H$ in $C^2(\T^d)$, let
\begin{displaymath}
Y^i_{N,l}(H, \eta)= N^{1-d}\sum_{x \in \T^d_N}H(\frac{x}{N})\{W_{x,x+e_i}(\eta) + \sum_{j=1}^d D_{i,j}(\eta^l(x))[\eta^l(x+e_j) - \eta^l(x)]\}.
\end{displaymath}

\begin{cor}\label{cor:keycor}
For every function $H$ in $C^2(\T^d)$ and $1 \le i \le d$,
\begin{displaymath}
\limsup_{\e \to 0}\limsup_{N \to \infty}\mathbb{E}_{\mu^N}[|\int^T_0 Y^i_{N,\e N}(H,\eta_s)ds|] = 0.
\end{displaymath}
\end{cor}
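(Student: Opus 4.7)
The plan is to deduce Corollary~\ref{cor:keycor} from Theorem~\ref{thm:keythm} via two standard ingredients: a Dynkin decomposition that turns the $L_N\mathfrak{f}$ discrepancy between $Y^i_{N,\e N}$ and $X^{\mathfrak{f},i}_{N,\e N}$ into a vanishing boundary term plus martingale, and a scaled entropy inequality that transfers the exponential moment bound under $\nu_{\rho_*}^N$ to an $L^1$ bound under the arbitrary initial measure $\mu^N$.

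First I would observe that, by the very definition of $V^{\mathfrak{f},l}_i$, for any cylinder function $\mathfrak{f}$,
\[
Y^i_{N,\e N}(H,\eta) - X^{\mathfrak{f},i}_{N,\e N}(H,\eta) = N^{1-d}\sum_{x\in\T_N^d}H(x/N)\,\tau_x L_N\mathfrak{f}(\eta) = L_N G_N(\eta),
\]
where $G_N(\eta):=N^{1-d}\sum_{x}H(x/N)\tau_x\mathfrak{f}(\eta)$ and the last equality uses translation invariance of $L_N$ on $\T_N^d$. Dynkin's formula applied to the accelerated process yields $\int_0^T L_N G_N(\eta_s)\,ds = N^{-2}[G_N(\eta_T)-G_N(\eta_0)-M^{G_N}(T)]$ for a martingale $M^{G_N}$. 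Since $\|G_N\|_\infty = O(N)$, the boundary contribution is $O(1/N)$; the quadratic variation $\int_0^T N^2\sum_b c_b(\pi_b G_N)^2\,ds$ is $O(N^{4-d})$ because $|\pi_b G_N|=O(N^{1-d})$ for each of the $O(N^d)$ bonds, giving $\mathbb{E}_{\mu^N}[|M^{G_N}(T)|]/N^2=O(N^{-d/2})$. Hence for each fixed $\mathfrak{f}$ the difference contributes zero in the double limit, and the corollary reduces to proving
\[
\inf_{\mathfrak{f}\in\Cy}\limsup_{\e\to 0}\limsup_{N\to\infty}\mathbb{E}_{\mu^N}\Big[\Big|\int_0^T X^{\mathfrak{f},i}_{N,\e N}(H,\eta_s)\,ds\Big|\Big] = 0.
\]

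Next I would fix a reference density $\rho_*\in(-1,1)$ for which every marginal of $\nu_{\rho_*}$ is strictly positive (automatic in Case~1 since $\Phi(\rho_*)>0$). Finiteness of $\C_N^d$ combined with the crude bound $\nu_{\rho_*}^N(\eta)\ge c^{N^d}$ then gives the uniform entropy estimate $\mathrm{Ent}(\mu^N\,|\,\nu_{\rho_*}^N)\le C_0 N^d$. For any $A>0$ the entropy inequality yields
\[
\mathbb{E}_{\mu^N}\Big[\Big|\int_0^T X^{\mathfrak{f},i}_{N,\e N}(H,\eta_s)\,ds\Big|\Big] \le \frac{C_0}{A} + \frac{1}{AN^d}\log\mathbb{E}_{\nu_{\rho_*}^N}\Big[\exp\Big\{AN^d\Big|\int_0^T X^{\mathfrak{f},i}_{N,\e N}(H,\eta_s)\,ds\Big|\Big\}\Big].
\]
By linearity of $X^{\mathfrak{f},i}_{N,\e N}$ in the test function, $AX^{\mathfrak{f},i}_{N,\e N}(H,\eta)=X^{\mathfrak{f},i}_{N,\e N}(AH,\eta)$, so Theorem~\ref{thm:keythm} applied to the smooth function $AH\in C^2(\T^d)$ shows that the second summand has $\inf_{\mathfrak{f}}\limsup_\e\limsup_N$ equal to $0$. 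Therefore the reduced quantity is bounded by $C_0/A$ after taking the three limits; letting $A\to\infty$ kills this constant and proves the corollary.

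The main obstacle of the overall program is hidden inside Theorem~\ref{thm:keythm}, whose proof rests on the spectral gap estimate and closed-form characterization of Subsections~3.4--3.5. Within this particular corollary the only non-routine observation is the rescaling $H\leadsto AH$, which self-improves the super-exponential estimate from prefactor $N^d$ to any prefactor $AN^d$ and, combined with $A\to\infty$ in the entropy bound, converts the coarse $O(1)$ entropy contribution into the sharp $0$.
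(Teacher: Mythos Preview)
Your proof is correct and follows exactly the standard route the paper defers to in \cite{KL}: the Dynkin decomposition makes the $L_N\mathfrak{f}$ discrepancy negligible (this is precisely what the paper means by ``the $L_N\mathfrak{f}$ term is negligible''), and the entropy inequality together with the rescaling $H\mapsto AH$ and $A\to\infty$ transfers the super-exponential estimate of Theorem~\ref{thm:keythm} under $\nu_{\rho_*}^N$ to an $L^1$ estimate under $\mu^N$. The paper gives no further details beyond the reference, so your write-up is simply an explicit version of the intended argument.
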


To prove this corollary, we can use the method in \cite{KL} straightforwardly. In particular, the $L_N\mathfrak{f}$ term is negligible. We have now all elements to prove the hydrodynamic behavior of our nongradient system.
\begin{proof}[Proof of Theorem \ref{thm:main1}]
Recall that the empirical measure $\pi^N_t$ is defined by (\ref{eq:empirical}). Denote by $Q_{\mu^N}$ the distribution on the path space $D([0,T], \mathcal{M}(\T^d))$ of the process $\pi^N_t$ where $\mathcal{M}(\T^d)$ stands for the space of signed measures on $\T^d$ endowed with the weak topology.

Following the same argument as for the generalized exclusion process in \cite{KL} it is easy to prove that the sequence $\{Q_{\mu^N}, N \ge 1 \}$ is weakly relatively compact and that every limit points $Q^{*}$ is concentrated on absolutely continuous paths $\pi_t(du)=\pi(t,u)du$ with density bounded by 1 and -1 from above and below respectively: $-1 \le \pi(t,u) \le 1$.

From Theorem \ref{thm:uniqueness} stated below, there exists at most one weak solution of (\ref{eq:hydro}). Therefore, to conclude the proof of the theorem, it remains to show that all limit points of the sequence $\{Q_{\mu^N}, N \ge 1\}$ are concentrated on absolutely continuous trajectories $\pi(t,du)=\pi(t,u)du$ whose densities are weak solutions of the equation (\ref{eq:hydro}). 

Fix a smooth function $H:\T^d \to \R$ and recall the definition of the martingale $M^{H}(t)$. Applying Corollary \ref{cor:keycor} to the last integral term in the formula (\ref{eq:martingale}) of $M^{H}(t)$, we obtain that for every $\de>0$,
\begin{align*}
\limsup_{\e \to 0}&\limsup_{N \to \infty}\mathbb{P}_{\mu^N}[| \langle \pi^N_T, H \rangle  -  \langle \pi^N_0, H \rangle \\
	& + \sum_{i,j=1}^d\int^T_0 N^{1-d}\sum_{x \in \T^d_N}(\partial_{u_i}^NH)(\frac{x}{N})\tau_xV_{i,j,\e N}(\eta_s)ds| > \de] = 0,
\end{align*} 
where
\begin{displaymath}
V_{i,j,\e N}(\eta)=D_{i,j}(\eta^{\e N}(0))[\eta^{\e N}(e_j)-\eta^{\e N}(0)].
\end{displaymath}
Denote by $\tilde{D}_{i,j}$ the integral of $D_{i,j}: \tilde{D}_{i,j}(\rho)=\int_{-1}^{\rho}D_{i,j}(\gamma)d\gamma$. Since $H$ is smooth and $D_{i,j}$ is continuous by Theorem \ref{thm:conti} stated below, with the help of Taylor's expansion and a spatial summation by parts, we have
\begin{align*}
\limsup_{\e \to 0}&\limsup_{N \to \infty}\mathbb{P}_{\mu^N}[| \langle \pi^N_T, H \rangle  -  \langle \pi^N_0, H \rangle \\
	& - \sum_{i,j=1}^d\int^T_0 N^{-d}\sum_{x \in \T^d_N}(\partial^2_{u_i,u_j}H)(\frac{x}{N})\tilde{D}_{i,j}(\eta^{\e N}_s(x))ds| > \de] = 0.
\end{align*}
Therefore, for every limit point $Q^{*}$ of the sequence $Q_{\mu^N}$,
\begin{align*}
\limsup_{\e \to 0}& Q^{*}[| \langle \pi_T, H \rangle  -  \langle \pi_0, H \rangle  \\
	& - \sum_{i,j=1}^d\int^T_0 ds \int_{\T^d} du (\partial^2_{u_i,u_j}H)(u)\tilde{D}_{i,j}((\pi_s * \iota_{\e})(u))| > \de] = 0
\end{align*}
where
\begin{displaymath}
\iota_{\e}(\cdot):=(2\e)^{-d}1_{[-\e,\e]^d}(\cdot)
\end{displaymath}
and $*$ represents the convolution. Since each limit point $Q^{*}$ is concentrated on absolutely continuous paths $\pi_t=\pi(t,u)du$ with $-1 \le \pi(t,u) \le 1$, for each fixed $0 \le s \le T$, $(\pi_s * \iota_{\e})(u)$ converges to $\pi(s,u)$ for almost $u$ in $\T^d$ as $\e \downarrow 0$. From this remark and the continuity of $\{ \tilde{D}_{i,j}, 1 \le i,j \le d \}$, we obtain that
\begin{align*}
Q^{*}[\Big| \langle \pi_T, H \rangle  -  \langle \pi_0, H \rangle - \sum_{i,j=1}^d\int^T_0 ds \int_{\T^d} du (\partial^2_{u_i,u_j}H)(u)\tilde{D}_{i,j}(\pi(s,u))\Big| > \de] = 0
\end{align*}
for all $H$ in $C^2(\T^d)$. The fact $D(\rho)=d(\rho)I$, namely $\tilde{D}(\rho)=\tilde{d}(\rho)I$ proved in Theorem \ref{thm:diagonal} permits to rewrite the last expression as
\begin{align*}
Q^{*}[\langle \pi_T, H \rangle  = \langle \pi_0, H \rangle + \int^T_0 ds \int_{\T^d} du \De H(u)\tilde{d}(\pi(s,u)) ] = 1.
\end{align*}
Denote by $\{t_n\}_{n \in \N}$ a dense subset of $[0,T]$ and repeat the same argument as we have done up to this point for any fixed $t_n$, then 
\begin{align*}
Q^{*}[\langle \pi_{t_n}, H \rangle  = \langle \pi_0, H \rangle + \int^{t_n}_0 ds \int_{\T^d} du \De H(u)\tilde{d}(\pi(s,u)) \quad \text{for every} \quad n \in \N] = 1.
\end{align*}
Since $Q^*$ is the probability measure on $D$ space and $\tilde{d}$ is bounded function on $[-1,1]$ and $Q^{*}$ is concentrated on paths $\pi_t=\pi(t,u)du$ with $-1 \le \pi(t,u) \le 1$, $Q^{*}$ is concentrated on the weak solution of (\ref{eq:hydro}) which concludes the proof of the theorem.
\end{proof}

\subsection{Central Limit Theorem Variances}
To state the main theorem of this subsection, first we introduce some notation. For a fixed positive integer $l$ we denote by $\La_l$ a cube in $\Z^d$ of side-length $2l+1$ centered at the origin: $\La_l:=\{-l, -l+1,..., l-1,l\}^d$. We denote the set of cylinder functions on $\chi^d$ by $\Cy$. For $\Psi$ in $\Cy$, denote by $\La_{\Psi}$ the smallest $d$-dimensional rectangle that contains the support of $\Psi$ and by $s_{\Psi}$ the smallest positive integer $s$ such that $\La_{\Psi} \subset \La_s$. Let $\Cy_0$ be the space of cylinder functions with mean zero with respect to all canonical invariant measures:
\begin{displaymath}
\Cy_0 = \{\ g \in \Cy \ ;  \langle g \rangle _{\La_g,K} = 0 \ \text{for all} \ -|\La_g| \le K \le |\La_g| \ \}.
\end{displaymath}
Here, for a finite subset $\La$ of $\Z^d$, we denote by $|\La|$ the cardinality of $\La$ and by $ \langle  \cdot  \rangle _{\La,K}$ the expectation with respect to the canonical measure $\nu_{\La,K}:=\nu_{\a}( \ \cdot \ | \sum_{x \in \La} \eta(x)=K)$ for $-|\La| \le K \le |\La|$ which is indeed independent of the choice of $\a$. For a rectangle $\La$ and a canonical measure $\nu_{\La,K}$, denote by $ \langle  \cdot, \cdot  \rangle _{\La,K}$ (resp.$ \langle  \cdot, \cdot  \rangle _{\a}$) the inner product in $L^2(\nu_{\La,K})$ (resp. $L^2(\nu_{\a})$).

It is known that to conclude the proof of Theorem \ref{thm:keythm} it is enough to show that
\begin{equation}
\inf_{\mathfrak{f} \in \Cy}\lim_{l \to \infty}\sup_K (2l)^{d} \langle (-L_{\La_l})^{-1}\tilde{V}^{\mathfrak{f},l}_{i}, \tilde{V}^{\mathfrak{f},l}_{i} \rangle _{l,K}=0 \label{eq:variance}
\end{equation}
where 
\begin{align*}
\tilde{V}^{\mathfrak{f},l}_i(\eta)&=(2l^{\prime}+1)^{-d}\sum_{|y| \le l^{\prime}}\tau_yW_{0,e_i}(\eta)  \\
&+ \sum_{j=1}^d D_{i,j}(\eta^l(0))[\eta^{l^{\prime}}(e_j) - \eta^{l^{\prime}}(0)]-(2l_{\mathfrak{f}}+1)^{-d}\sum_{y \in \La_{l_{\mathfrak{f}}}}(\tau_yL_N\mathfrak{f})(\eta),
\end{align*}
$l^{\prime}=l-1$ and $l_{\mathfrak{f}}=l-s_{\mathfrak{f}}-1$ so that $\tau_yL_N\mathfrak{f}$ is $\mathcal{F}_{\La_l}$-measurable for every $y$ in $\La_{l_{\mathfrak{f}}}$. This follows from Theorem \ref{thm:converge} and Corollary \ref{cor:uniconti} below.

For the beginning of the proof we obtain a variational formula for this variance. We start with introducing a semi-norm on $\Cy_0$, which is closely related to the central limit theorem variance. For $1 \le k \le d$ denote $\mathcal{U}^k=(\mathcal{U}_1^k,...,\mathcal{U}_d^k)$ the $d$-dimensional cylinder function with coordinates defined by
\begin{displaymath}
(\mathcal{U}^k)_i(\eta) = \de_{i,k}\nabla_{0,e_k}\eta(0) \quad \text{for all} \quad 1 \le i \le d.
\end{displaymath}
Here $\de_{i,j}$ stands for the delta of Kronecker. For  cylinder functions $g$, $h$ in $\Cy_0$ and $1 \le i \le d$, let
\begin{displaymath}
\ll g,h \gg_{\rho,0}= \sum_{x \in \Z^d} \langle g, \tau_xh \rangle _{\rho} \quad \text{and} \quad \ll g \gg_{\rho,j}=\sum_{x \in \Z^d}x_j \langle g,\eta(x) \rangle _{\rho},
\end{displaymath}
where $x_j$ stands for the $j$-th coordinate of $x \in \Z^d$. Both $\ll g,h \gg_{\rho,0}$ and $\ll g \gg_{\rho,j}$ are well defined because $g$ and $h$ belong to $\Cy_0$ and therefore all but a finite number of terms vanish. For $h$ in $\Cy_0$, define the semi-norm $\ll h \gg_{\rho}^{\frac{1}{2}}$ by 
\begin{align}
\ll &h \gg_{\rho}  \\ 
&=\sup_{g \in \Cy_0, a \in \R^d}\{2\ll g,h \gg_{\rho,0}+2\sum_{i=1}^d a_i \ll h \gg_{\rho,i} - \sum_{i=1}^d \langle \big( \sum_{j=1}^d a_j(\mathcal{U}^j)_i + \nabla_{0,e_i}\Gamma_g \big)^2 \rangle _{\rho}\} \nonumber \\
&=\sup_{g \in \Cy_0, a \in \R^d}\{2\ll g,h \gg_{\rho,0}+2\sum_{i=1}^d a_i \ll h \gg_{\rho,i} - \sum_{i=1}^d \D_{0,e_i}(\nu_{\rho};a_i\eta(0)+\Gamma_g) \}, \nonumber
\end{align}
where $a=(a_i)_{i=1}^d$.

We investigate in the next section several properties of the semi-norm $\ll \cdot \gg_{\rho}^{\frac{1}{2}}$, while in this section we prove that the variance
\begin{displaymath}
(2l)^{-d} \langle (-L_{\La_l})^{-1}\sum_{|x| \le l_{\psi}}\tau_x\psi, \sum_{|x| \le l_{\psi}}\tau_x\psi \rangle _{l,K_l}
\end{displaymath}
of any cylinder function $\psi$ in $\Cy_0$ converges to $\ll \psi \gg_{\rho}$, as $ l \uparrow \infty$ and $\frac{K_l}{(2l)^d} \to \rho$. Here $l_{\psi}$ stands for $l-s_{\psi}$ so that the support of $\tau_x\psi$ is included in $\La_l$ for every  $x \le l_{\psi}$. By elementary computations relying on an adequate change of variables, the norm $\ll \cdot \gg_{\rho}$ may be rewritten as
\begin{align*}
\ll h \gg_{\rho} &=\sup_{g \in \Cy_0, a \in \R^d}\{2\ll g,h \gg_{\rho,0}+2\sum_{i=1}^d a_i \ll h \gg_{\rho,i} \\
	& +2 \sum_{i=1}^d a_i \ll W_{0,e_i},g \gg_{\rho,0} - \|a\|^2 \langle (\nabla_{0,e_1}\eta(0))^2 \rangle _{\rho} - \langle \|\nabla\Gamma_g \|^2 \rangle _{\rho}\}.
\end{align*}

We are now in a position to state the main result of this section.
\begin{prop}\label{prop:variance}
Consider a cylinder function $\psi$ in $\Cy_0$ and a sequence of integers $K_l$ such that $-(2l+1)^d \le K_l \le (2l+1)^d$ and $\lim_{\l \to \infty} \frac{K_l}{(2l)^d}=\rho$. Then,
\begin{displaymath}
\lim_{l \to \infty} (2l)^{-d} \langle (-L_{\La_l})^{-1}\sum_{|x| \le l_{\psi}}\tau_x\psi, \sum_{|x| \le l_{\psi}}\tau_x\psi \rangle _{l,K_l} = \ll \psi \gg_{\rho}.
\end{displaymath}
\end{prop}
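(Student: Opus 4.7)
The plan is to apply the standard variational characterization of the variance associated with a reversible generator, and then establish matching upper and lower bounds that both equal $\ll \psi \gg_{\rho}$. Set $\Si_l := \sum_{|x| \le l_{\psi}} \tau_x \psi$ and $\nu_l := \nu_{\La_l, K_l}$. Since $\psi \in \Cy_0$, $\Si_l$ has mean zero under every $\nu_l$ and therefore lies in the range of $L_{\La_l}$. Duality yields
\[
(2l)^{-d}\langle (-L_{\La_l})^{-1}\Si_l, \Si_l\rangle_{l, K_l} = (2l)^{-d}\sup_{f} \Bigl\{2\langle \Si_l, f\rangle_{l, K_l} - \sum_{b \in (\La_l)^*} \D_b(\nu_l; f)\Bigr\},
\]
the supremum running over $\nu_l$-mean-zero functions on $\chi^{\La_l}$.

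For the lower bound I test against
\[
f_{g,a}(\eta) := \sum_{y \in \La_{l - s_g}} \tau_y g(\eta) + \sum_{i=1}^d a_i \sum_{x \in \La_l} x_i \,\eta(x),
\]
for arbitrary $g \in \Cy_0$ and $a \in \R^d$. Translation invariance, together with the equivalence of ensembles to replace $\nu_l$ by $\nu_{\rho}$ on local observables as $K_l/(2l)^d \to \rho$, gives
\[
(2l)^{-d}\langle \Si_l, f_{g,a}\rangle_{l, K_l} \longrightarrow \ll g, \psi \gg_{\rho, 0} + \sum_{i=1}^d a_i \ll \psi \gg_{\rho, i},
\]
\[
(2l)^{-d}\sum_{b \in (\La_l)^*} \D_b(\nu_l; f_{g,a}) \longrightarrow \sum_{i=1}^d \D_{0, e_i}\bigl(\nu_{\rho}; a_i \eta(0) + \Ga_g\bigr).
\]
Taking the supremum over $g \in \Cy_0$ and $a \in \R^d$ produces $\ll \psi \gg_{\rho}$ as a lower bound for $\liminf_{l \to \infty}(2l)^{-d}\langle (-L_{\La_l})^{-1}\Si_l, \Si_l\rangle_{l, K_l}$.

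For the upper bound, I restrict to sequences of near-optimizers $f_l$ whose rescaled Dirichlet form $(2l)^{-d}\sum_b \D_b(\nu_l; f_l)$ is uniformly bounded. A translation-averaging argument on subcubes $\La_k \subset \La_l$ with $1 \ll k \ll l$, combined with the convexity of the Dirichlet form under averaging and a second moment bound on $f_l$, allows me to replace $f_l$, up to $o((2l)^d)$ errors in both the linear functional and the Dirichlet form, by a local approximation of the shape $\sum_{y} \tau_y g_l + \sum_i a_{l,i} \sum_x x_i \eta(x)$. A compactness argument in the semi-norm $\ll\cdot\gg_{\rho}^{1/2}$ extracts a limiting $(g, a)$; passing to the limit via equivalence of ensembles matches the resulting expression against the variational formula for $\ll \psi \gg_{\rho}$, producing the matching $\limsup$ bound.

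The main obstacle is the upper bound, and specifically the identification of the affine-in-coordinates component $a$ within a general near-optimizer $f_l$. This component is forced on the space by the conservation law $\sum_{x \in \La_l} \eta(x) = K_l$, which admits constant-gradient test functions that contribute negligibly to the bulk per unit volume yet affect boundary Dirichlet bonds and the inner product with $\Si_l$ at leading order; isolating it cleanly from the translation-invariant bulk part $\sum_y \tau_y g_l$ demands careful boundary bookkeeping, while the bulk analysis rests on translation-averaging and equivalence of ensembles in a standard but technically delicate fashion.
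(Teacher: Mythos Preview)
Your lower bound is correct and matches the standard argument. The upper bound, however, has a real gap at the step where you assert that translation-averaging on subcubes lets you replace the near-optimizer $f_l$ by something of the shape $\sum_y \tau_y g_l + \sum_i a_{l,i}\sum_x x_i\eta(x)$. Averaging the bond gradients $\pi_{0,e_i}\tau_y f_l$ and passing to a weak $L^2(\nu_\rho)$ limit produces a translation-covariant family, but that family is a priori only a \emph{germ of closed form} in the sense of Section~3.5; identifying it with $a_i\nabla_{0,e_i}\eta(0) + \nabla_{0,e_i}\Gamma_g$ for some cylinder $g$ and vector $a$ is exactly the content of Theorem~\ref{thm:closedform}, which you never invoke. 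Without that structural result the phrase ``compactness in the semi-norm $\ll\cdot\gg_\rho^{1/2}$'' has no purchase: the semi-norm lives on $\Cy_0$, not on arbitrary $L^2(\nu_l)$ test functions, and there is no way to project a generic near-optimizer onto the span of $\{W_{0,e_i}\}$ and $L\Cy_0$ without first knowing that every closed form is exact in this sense. The paper's proof is deferred to Theorem~7.4.1 of \cite{KL} precisely modulo Theorem~\ref{thm:closedform}, and states this dependence explicitly.

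A second omission is the spectral gap (Theorem~\ref{thm:spectralgap}). The ``second moment bound on $f_l$'' you need, in order to control the boundary terms arising when $\langle \Si_l, f_l\rangle_{l,K_l}$ is rewritten via integration by parts as a sum over bond gradients, comes from the spectral gap applied on subcubes; without it neither the boundary bookkeeping nor the equivalence-of-ensembles replacement for the averaged gradients can be justified.
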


Once Theorem \ref{thm:closedform}, which is stated below, is established the proof of Proposition \ref{prop:variance} is the same as that of Theorem 7.4.1 of \cite{KL} since the proof does not depend on the specific form of $\D_b$.

We conclude this section proving that for each $\psi$ in $\Cy_0$ the function $\ll \psi \gg : [-1,1] \to \R_+$ that associates to each density $\rho$ the value $\ll \psi \gg_{\rho}$ is continuous and that the convergence of the finite volume variances to $\ll \cdot \gg_{\rho} $ is uniform on $[-1,1]$. For each $l$ in $\N$ and $ -(2l+1)^d \le K \le (2l+1)^d$, denote by $V_l^{\psi}\big(\frac{K}{(2l+1)^d}\big)$ the variance of $(2l+1)^{-d}\sum_{|x| \le l_{\psi}}\tau_x\psi$ with respect to $\nu_{l,K}$:
\begin{displaymath}
V_l^{\psi}\Big(\frac{K}{(2l+1)^d}\Big)=(2l)^{-d} \langle (-L_{\La_l})^{-1}\sum_{|x| \le l_{\psi}}\tau_x\psi, \sum_{|x| \le l_{\psi}}\tau_x\psi \rangle _{l,K} 
\end{displaymath}

We may interpolate linearly to extend the definition of $V_l^{\psi}$ to the all interval $[-1,1]$. With this definition $V_l^{\psi}$ is continuous. Proposition \ref{prop:variance} asserts that $V_l^{\psi}$ converges, as $l \uparrow \infty$, to $\ll \psi \gg_{\rho}$, for any sequence $K_l$ such that $\frac{K_l}{(2l+1)^d} \to \rho$. In particular, $\lim_{l \to \infty}V_l^{\psi}(\rho_l)=\ll \psi \gg_{\rho}$ for any sequence $\rho_l \to \rho$. This implies that $\ll \psi \gg_{\rho}$ is continuous and that $V_l^{\psi}(\cdot)$ converges uniformly to $\ll \psi \gg_{\cdot}$ as $l \uparrow \infty$. We have thus proved the following theorem.

\begin{thm}\label{thm:converge}
For each fixed $h$ in $\Cy_0$, $\ll h \gg_{\rho}$ is continuous as a function of the density $\rho$ on $[-1,1]$. Moreover, the variance
\begin{displaymath}
(2l)^{-d} \langle (-L_{\La_l})^{-1}\sum_{|x| \le l_h}\tau_xh, \sum_{|x| \le l_h}\tau_xh \rangle _{l,K_l} 
\end{displaymath}
converges uniformly to $\ll h \gg_{\rho}$ as $l \uparrow \infty$ and $\frac{K_l}{(2l+1)^d} \to \rho$. In particular,
\begin{displaymath}
\lim_{l \to \infty} \sup_{-(2l+1)^d \le K \le (2l+1)^d} (2l)^{-d} \langle (-L_{\La_l})^{-1}\sum_{|x| \le l_h}\tau_xh, \sum_{|x| \le l_h}\tau_xh \rangle _{l,K}=\sup_{-1 \le \rho \le 1}\ll h \gg_{\rho}.
\end{displaymath}
\end{thm}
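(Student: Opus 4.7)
The plan is to deduce everything from Proposition \ref{prop:variance} by a soft compactness argument. First I extend the definition of $V_l^h$ from the finite set $\{K/(2l+1)^d : -(2l+1)^d \le K \le (2l+1)^d\}$ to a continuous function on $[-1,1]$ via piecewise linear interpolation. Since linear interpolation between two values never exceeds either endpoint, the supremum of this extension over $[-1,1]$ coincides with $\sup_K V_l^h(K/(2l+1)^d)$, which is precisely the quantity appearing in the final display of the theorem.

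Second, I upgrade Proposition \ref{prop:variance} to the statement that $V_l^h(\rho_l) \to \ll h \gg_\rho$ for \emph{every} sequence $\rho_l \in [-1,1]$ converging to $\rho$. Given such $\rho_l$, pick integers $K_l$ with $|K_l/(2l+1)^d - \rho_l| \le 1/(2l+1)^d$; then $K_l/(2l+1)^d \to \rho$, so Proposition \ref{prop:variance} applies to $V_l^h(K_l/(2l+1)^d)$. The piecewise linear extension sandwiches $V_l^h(\rho_l)$ between two values of the form $V_l^h(K/(2l+1)^d)$ both converging to $\ll h \gg_\rho$, giving the claim.

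Third, continuity of $\rho \mapsto \ll h \gg_\rho$ on $[-1,1]$ follows by a diagonal trick. Given $\rho_n \to \rho$, apply the upgraded statement with the constant sequence $\rho_n$ in $l$ to choose $l_n \to \infty$ with $|V_{l_n}^h(\rho_n) - \ll h \gg_{\rho_n}| < 1/n$; then the upgraded statement, applied to the actual sequence $\rho_n$, yields $V_{l_n}^h(\rho_n) \to \ll h \gg_\rho$, so the triangle inequality gives $\ll h \gg_{\rho_n} \to \ll h \gg_\rho$. Uniform convergence on $[-1,1]$ is then a standard contradiction argument: if it failed, there would exist $\varepsilon > 0$, $l_k \to \infty$ and $\rho_k \in [-1,1]$ with $|V_{l_k}^h(\rho_k) - \ll h \gg_{\rho_k}| \ge \varepsilon$; by compactness of $[-1,1]$ extract $\rho_k \to \rho^*$, and the upgraded Proposition \ref{prop:variance} forces $V_{l_k}^h(\rho_k) \to \ll h \gg_{\rho^*}$ while the continuity just established forces $\ll h \gg_{\rho_k} \to \ll h \gg_{\rho^*}$, contradicting the lower bound $\varepsilon$.

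The convergence of suprema in the last display of the theorem is then immediate: uniform convergence of the continuous functions $V_l^h$ to the continuous function $\ll h \gg_\cdot$ on the compact interval $[-1,1]$ implies convergence of their suprema, and by the first step the sup of $V_l^h$ over $[-1,1]$ equals its sup over the canonical densities $K/(2l+1)^d$. The only substantive input is Proposition \ref{prop:variance}; everything else is a topological repackaging, so I foresee no obstacle beyond that proposition, which is handled elsewhere in the section.
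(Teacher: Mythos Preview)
Your proposal is correct and follows essentially the same approach as the paper: both extend $V_l^h$ to $[-1,1]$ by linear interpolation, upgrade Proposition~\ref{prop:variance} to arbitrary sequences $\rho_l\to\rho$, and deduce continuity of $\rho\mapsto\ll h\gg_\rho$ together with uniform convergence from this. The paper's argument is terser (it simply asserts that ``$\lim_{l\to\infty}V_l^\psi(\rho_l)=\ll\psi\gg_\rho$ for any $\rho_l\to\rho$'' implies continuity and uniform convergence), while you spell out the diagonal and compactness arguments behind these implications.
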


\subsection{The Diffusion Coefficient}

We investigate here the main properties of the semi norm $\ll \cdot \gg_{\rho}$ introduced in the previous section. We first define from $\ll \cdot \gg_{\rho}$ a semi-inner product on $\Cy_0$ through polarization:
\begin{equation}
\ll g,h \gg_{\rho} = \frac{1}{4}\{\ll g+h \gg_{\rho} - \ll g-h \gg_{\rho} \}. \label{eq:semiin}
\end{equation}

It is easy to check that (\ref{eq:semiin}) defines a semi-inner product on $\Cy_0$. Denote by $\mathcal{N}_{\rho}$ the kernel of the semi-norm $\ll \cdot \gg_{\rho}^{\frac{1}{2}}$ on $\Cy_0$. Since $\ll \cdot \gg_{\rho}$ is a semi-inner product on $\Cy_0$, the completion of $\Cy_0|_{\mathcal{N}_{\rho}}$, denoted by $\mathcal{H}_{\rho}$, is a Hilbert space.

Simple computations show that the linear space generated by the currents $\{W_{0,e_i},\ 1 \le i \le d\}$ and $L\Cy_0 = \{ Lg; \ g \in \Cy_0 \}$ are subsets of $\Cy_0$. The first main result of this section consists in showing that $\mathcal{H}_{\rho}$ is the completion of $L\Cy_0|_{\mathcal{N}_{\rho}}+\{W_{0,e_i}, 1 \le i \le d\}$, in other words, that all elements of $\mathcal{H}_{\rho}$ can be approximated by $\sum_{1 \le i \le d}a_iW_{0,e_i} + Lg$ for some $a$ in $\R^d$ and $g$ in $\Cy_0$. To prove this result we derive two elementary identities:
\begin{equation}
\ll h,Lg \gg_{\rho} = - \ll h,g \gg_{\rho,0} \ \text{and} \quad \ll h,W_{0,e_i} \gg_{\rho}=-\ll h \gg_{\rho,i} \label{eq:keyid}
\end{equation}
for all $h,g$ in $\Cy_0$ and $1 \le i \le d$.

By Proposition \ref{prop:variance} and (\ref{eq:semiin}), the semi-inner product $\ll h,g \gg_{\rho}$ is the limit of the covariance $(2l)^{-d} \langle (-L_{\La_l})^{-1}\sum_{|x| \le l_g}\tau_xg, \sum_{|x| \le l_h}\tau_xh \rangle _{l,K_l}$ as $l \uparrow \infty$ and $\frac{K_l}{(2l)^d} \to \rho$. In particular, if $g=Lg_0$, for some cylinder function $g_0$, the inverse of the generator cancels with the generator. Therefore, $\ll h,Lg_0 \gg_{\rho}$ is equal to
\begin{displaymath}
-\lim_{l \to \infty} (2l)^{-d} \langle \sum_{|x| \le l_{g_0}}\tau_xg_0, \sum_{|x| \le l_h}\tau_xh \rangle _{l,K_l}= \ll g_0,h \gg_{\rho,0}.
\end{displaymath}
The second identity is proved in a similar way.

It follows from the first identity of (\ref{eq:keyid}) that the gradients $\{\eta(e_i)-\eta(0), 1 \le i \le d \}$ are orthogonal to the space $L\Cy_0$, while the second identity permits to compute inner product of cylinder functions with the current:
\begin{align}
\ll \eta(e_i)-\eta(0), Lh \gg_{\rho} &=0,  \label{eq:cal1}\\
\ll \eta(e_i)-\eta(0), W_{0,e_j} \gg_{\rho}& = -\chi(\rho)\delta_{i,j}, \label{eq:cal2}
\end{align}
and 
\begin{equation}
 \ll W_{0,e_i}, W_{0,e_j} \gg_{\rho} =   \langle (\nabla_{0,e_1}\eta(0))^2 \rangle _{\rho}\delta_{i,j}. \label{eq:cal3} 
\end{equation}
for all $1 \le i,j \le d$ and $h \in \Cy_0$.
In this formula $\chi(\rho)$ stands for the static compressibility and is equal to $ \langle \eta(0)^2 \rangle _{\rho} - \langle \eta(0) \rangle _{\rho}^2$. Furthermore,
\begin{equation}
\ll \sum_{j=1}^d a_j W_{0,e_j} +Lg \gg_{\rho} = \sum_{i=1}^d  \langle  \{ \nabla_{0,e_i}(a_i \eta(0) + \Gamma_g )\}^2 \rangle _{\rho} \label{eq:current}
\end{equation}
for $a$ in $\R^d$ and $g$ in $\Cy_0$. In particular, the variational formula for $\ll h \gg_{\rho}$ writes
\begin{equation}
\ll h \gg_{\rho} =\sup_{g \in \Cy_0, a \in \R^d}\{ -2 \ll h, \sum_{i=1}^d a_i W_{0,e_i}+Lg \gg_{\rho} - \ll \sum_{i=1}^d a_i W_{0,e_i} +Lg \gg_{\rho} \}. \label{eq:variational}
\end{equation}

\begin{prop}
Recall that we denote by $L\Cy_0$ the space $\{ Lg; \ g \in \Cy_0 \}$. Then, for each $-1 \le \rho \le 1$, we have
\begin{displaymath}
\mathcal{H}_{\rho} = \overline{L\Cy_0}|_{\mathcal{N}_{\rho}} \oplus \{W_{0,e_i}, 1 \le i \le d\}.
\end{displaymath}
\end{prop}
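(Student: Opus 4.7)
The proposition asserts that the Hilbert space $\mathcal{H}_\rho$ is the algebraic direct sum of $\overline{L\Cy_0}|_{\mathcal{N}_\rho}$ and the $d$-dimensional subspace $\mathrm{span}\{W_{0,e_i}:1\le i\le d\}$. The argument naturally splits into two parts: (i) the sum is dense in $\mathcal{H}_\rho$; (ii) the intersection is trivial.

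For (i), I would apply the variational formula (\ref{eq:variational}) directly. Suppose $h \in \Cy_0$ satisfies $\ll h, Lg \gg_\rho = 0$ and $\ll h, W_{0,e_i} \gg_\rho = 0$ for every $g \in \Cy_0$ and every $1 \le i \le d$. By bilinearity, $\ll h, \sum_i a_i W_{0,e_i} + Lg \gg_\rho$ then vanishes identically in $(a,g) \in \R^d \times \Cy_0$, so (\ref{eq:variational}) collapses to
\[
\ll h \gg_\rho \;=\; \sup_{g \in \Cy_0,\, a \in \R^d} \Bigl\{ - \ll \sum_{i=1}^d a_i W_{0,e_i} + Lg \gg_\rho \Bigr\}.
\]
Because $\ll \cdot \gg_\rho \ge 0$, the supremum is $\le 0$, and the choice $a = 0,\, g = 0$ attains the value $0$. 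Hence $\ll h \gg_\rho = 0$, i.e., $h$ represents the zero element of $\mathcal{H}_\rho$. Since $\Cy_0|_{\mathcal{N}_\rho}$ is by construction dense in $\mathcal{H}_\rho$, this shows that the orthogonal complement of $L\Cy_0 + \mathrm{span}\{W_{0,e_i}\}$ inside $\mathcal{H}_\rho$ is trivial, so the sum is dense.

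For (ii), assume $\sum_{i=1}^d a_i W_{0,e_i} = \lim_{n\to\infty} L g_n$ in $\mathcal{H}_\rho$ for some $a \in \R^d$ and cylinder functions $g_n \in \Cy_0$. Taking the inner product of both sides with the cylinder function $\eta(e_k) - \eta(0) \in \Cy_0$ and passing to the limit using continuity of $\ll \cdot,\cdot \gg_\rho$, identity (\ref{eq:cal1}) forces the right-hand side to vanish, while identity (\ref{eq:cal2}) evaluates the left-hand side to $-\chi(\rho)\, a_k$. Since $\chi(\rho) > 0$ for $\rho \in (-1,1)$, we conclude $a_k = 0$ for each $k$, so the two subspaces intersect only at the origin of $\mathcal{H}_\rho$.

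The main obstacle is essentially bookkeeping rather than new ideas: one must carefully distinguish cylinder functions from their equivalence classes in $\mathcal{H}_\rho$, and justify passing to the limit inside $\ll\cdot,\cdot\gg_\rho$. All of the computational content is already encoded in the identities (\ref{eq:keyid})--(\ref{eq:cal3}) and the variational characterization (\ref{eq:variational}) established in the preceding paragraphs. The degenerate endpoints $\rho = \pm 1$, where $\chi(\rho) = 0$ and $\mathcal{H}_\rho$ collapses, should be handled separately as an immediate special case.
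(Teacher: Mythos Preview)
Your overall strategy matches the standard argument (which is exactly what the paper invokes by citing Proposition~7.5.2 of \cite{KL}), and part~(ii) is clean. However, part~(i) contains a genuine logical slip. You prove that if $h \in \Cy_0$ is orthogonal to $V := L\Cy_0 + \mathrm{span}\{W_{0,e_i}\}$ then $h \in \mathcal{N}_\rho$, and then infer $V^\perp = \{0\}$ in $\mathcal{H}_\rho$ ``since $\Cy_0|_{\mathcal{N}_\rho}$ is dense''. But density of a subspace does not force $V^\perp \cap \Cy_0 = \{0\}$ to imply $V^\perp = \{0\}$: in $\ell^2$, the finitely supported sequences are dense yet meet the one-dimensional closed subspace spanned by $(1,\tfrac12,\tfrac13,\dots)$ only at the origin.

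The repair is immediate and uses the same variational identity you already quoted. For any $h \in \Cy_0$ and any $\varepsilon>0$, formula~(\ref{eq:variational}) supplies $a \in \R^d$ and $g \in \Cy_0$ with
\[
-2\ll h,\ \textstyle\sum_i a_i W_{0,e_i} + Lg \gg_\rho \;-\; \ll \textstyle\sum_i a_i W_{0,e_i} + Lg \gg_\rho \;>\; \ll h \gg_\rho - \varepsilon,
\]
which, since $V$ is a linear space (so $-V=V$), rearranges to $\ll h - v \gg_\rho < \varepsilon$ for some $v \in V$. Hence every $h \in \Cy_0$ already lies in $\overline{V}$, and \emph{now} the density of $\Cy_0|_{\mathcal{N}_\rho}$ in $\mathcal{H}_\rho$ legitimately yields $\overline{V}=\mathcal{H}_\rho$. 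With this adjustment your proof is correct and coincides with the approach the paper defers to \cite{KL}.
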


\begin{proof}
We can apply the proof of Proposition 7.5.2 in \cite{KL} straightforwardly.
\end{proof}

\begin{cor}\label{cor:matrix}
For each $g \in \Cy_0$, there exists a unique vector $a \in \R^d$ such that
\begin{displaymath}
g-\sum_{j=1}^d a_j W_{0,e_j}  \in \overline{L\Cy_0} \quad \text{in} \quad \mathcal{H}_{\rho}.
\end{displaymath}
\end{cor}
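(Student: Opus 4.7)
The plan is to deduce the corollary almost immediately from the preceding proposition, which gives the orthogonal direct-sum decomposition
\begin{displaymath}
\mathcal{H}_\rho \;=\; \overline{L\Cy_0}\big|_{\mathcal{N}_\rho} \,\oplus\, \mathrm{span}\{W_{0,e_i},\ 1\le i\le d\}.
\end{displaymath}
For existence, I would note that any $g\in\Cy_0$ defines an element of $\mathcal{H}_\rho$ through its equivalence class modulo $\mathcal{N}_\rho$. Applying the decomposition to this class produces $a\in\R^d$ and an element $h\in\overline{L\Cy_0}$ with $g = h + \sum_{j=1}^d a_j W_{0,e_j}$ in $\mathcal{H}_\rho$, which is exactly the required statement.

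For uniqueness, I would suppose two vectors $a,a'\in\R^d$ both work. Subtracting, the element $\sum_{j=1}^d (a_j-a'_j) W_{0,e_j}$ lies in $\overline{L\Cy_0}$; but it also lies in the complementary summand $\mathrm{span}\{W_{0,e_i}\}$. Since the decomposition is a direct sum, this element must vanish in $\mathcal{H}_\rho$. To conclude $a_j = a'_j$ for each $j$, I would invoke identity (\ref{eq:cal3}), which gives
\begin{displaymath}
\ll W_{0,e_i}, W_{0,e_j} \gg_\rho \;=\; \langle(\nabla_{0,e_1}\eta(0))^2\rangle_\rho\,\delta_{i,j},
\end{displaymath}
so the currents are orthogonal with common norm $\langle(\nabla_{0,e_1}\eta(0))^2\rangle_\rho^{1/2}$. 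For $\rho\in(-1,1)$ this common value is strictly positive (the product measure $\nu_\rho$ puts positive mass on configurations with $\eta(0)\neq\eta(e_1)$), so $\{W_{0,e_i}\}_{i=1}^d$ is linearly independent in $\mathcal{H}_\rho$, forcing $a_j - a'_j = 0$ for all $j$.

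The entire argument is essentially bookkeeping once the decomposition proposition is in hand; there is no substantial obstacle. The only point that needs a brief check is the linear independence of the currents in $\mathcal{H}_\rho$, handled via (\ref{eq:cal3}), and the boundary cases $\rho=\pm 1$ where $\nu_\rho$ degenerates and $\mathcal{H}_\rho$ itself collapses, so the statement is vacuous.
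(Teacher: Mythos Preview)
Your proposal is correct and matches the paper's intended argument: the paper states the corollary without proof, as an immediate consequence of the decomposition proposition, and your write-up simply fills in the straightforward details. The only minor imprecision is your remark about $\rho=\pm 1$: there the space $\mathcal{H}_\rho$ is trivial, so existence holds for every $a$ and uniqueness \emph{fails} rather than being vacuous---but the paper implicitly works with $\rho\in(-1,1)$ throughout this discussion (cf.\ the division by $\chi(\rho)$ in the definition of $D(\rho)$), so this is harmless.
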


We now start to describe the diffusion coefficient $D$ of the hydrodynamic equation. From Corollary \ref{cor:matrix}, there exists a matrix $ \{ Q_{i,j}, 1 \le i,j \le d\}$  such that
\begin{equation}
\eta(e_i) - \eta(0) + \sum_{j=1}^d Q_{i,j} W_{0,e_j}  \in \overline{L\Cy_0} \quad \text{in} \quad \mathcal{H}_{\rho}. \label{eq:matrixq}
\end{equation}

Notice that the matrix $Q=Q(\rho)$ depends on the density $\rho$ because the inner product depends on $\rho$. It is easily shown that $Q$ is symmetric and strictly positive.

Denote by $D = D(\rho)$ the inverse of $Q$, which is also symmetric and strictly positive. We will see below that $D(\rho)$ is the diffusion coefficient of the hydrodynamic equation (\ref{eq:hydro}). Since $D$ is the inverse of $Q$, we have that
\begin{displaymath}
W_{0,e_i} + \sum_{j=1}^d D_{i,j}[\eta(e_j) - \eta(0)]   \in \overline{L\Cy_0} \quad \text{in} \quad \mathcal{H}_{\rho}.
\end{displaymath}
for $1 \le i \le d$. This relation provides a variational characterization of the diffusion coefficient $D$. Indeed, for all vectors $a \in \R^d$,
\begin{equation}
\inf_{g \in \Cy_0}\{ \ll \sum_{i=1}^d a_i W_{0,e_i} + \sum_{i,j=1}^d a_i D_{i,j}[\eta(e_j) - \eta(0)] -Lg \gg_{\rho} \} =0 .
\end{equation}

Since gradients are orthogonal to the space $L\Cy_0$,
\begin{displaymath}
\ll \eta(e_j)-\eta(0), W_{0,e_i} \gg_{\rho} = -\chi(\rho)\delta_{i,j}, 
\end{displaymath}
and  
\begin{displaymath}
\ll \eta(e_j)-\eta(0), \eta(e_k)-\eta(0) \gg_{\rho} = \chi(\rho)Q_{j,k} = \chi(\rho)[D^{-1}]_{j,k} ,
\end{displaymath}
the last identity reduces to
\begin{displaymath}
\inf_{g \in \Cy_0}\{ -\chi(\rho)a^{*}Da + \ll \sum_{i=1}^d a_i W_{0,e_i} -Lg \gg_{\rho} \} =0 ,
\end{displaymath}
where $a^{*}$ stands for the transposition of $a$. We have thus obtained a variational formula for $D(\rho)$.

\begin{thm}\label{thm:coe}
The diffusion coefficient $D(\rho)$ is such that
\begin{equation}
a^{*}Da = \frac{1}{\chi(\rho)} \inf_{g \in \Cy_0} \ll \sum_{i=1}^d a_i W_{0,e_i} -Lg \gg_{\rho} = \frac{1}{\chi(\rho)} \inf_{g \in \Cy_0} \sum_{i=1}^d  \langle \{ ( \nabla_{0,e_i}(a_i \eta(0)- \Gamma_g )\}^2 \rangle _{\rho} \label{eq:inf}
\end{equation}
for all $a \in \R^d$.
\end{thm}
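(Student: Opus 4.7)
The plan is to read off both equalities in (\ref{eq:inf}) from the identities already collected in the section, with essentially no new work beyond careful bookkeeping.

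First I would derive the second equality. Starting from identity (\ref{eq:current}),
\begin{displaymath}
\ll \sum_{i=1}^d a_i W_{0,e_i} + Lh \gg_{\rho} = \sum_{i=1}^d \langle \{\nabla_{0,e_i}(a_i\eta(0)+\Gamma_h)\}^2 \rangle_{\rho},
\end{displaymath}
and substituting $h=-g$ (which sends $Lh \mapsto -Lg$ and $\Gamma_h \mapsto -\Gamma_g$), I immediately obtain
\begin{displaymath}
\ll \sum_{i=1}^d a_i W_{0,e_i} - Lg \gg_{\rho} = \sum_{i=1}^d \langle \{\nabla_{0,e_i}(a_i\eta(0)-\Gamma_g)\}^2 \rangle_{\rho},
\end{displaymath}
so taking infimum over $g\in\Cy_0$ yields the second equality.

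For the first equality, I would start from the defining relation (\ref{eq:matrixq}) for $D=Q^{-1}$. Multiplying by $a_i$, summing, and using that $\inf_{g\in\Cy_0}\ll \cdot - Lg \gg_{\rho}$ computes the squared distance to $\overline{L\Cy_0}$, we have
\begin{displaymath}
\inf_{g \in \Cy_0} \ll \sum_{i=1}^d a_i W_{0,e_i} + \sum_{i,j=1}^d a_i D_{i,j}[\eta(e_j)-\eta(0)] - Lg \gg_{\rho} = 0.
\end{displaymath}
Expanding the semi-norm by polarization gives three terms: the pure current-minus-$Lg$ term $\ll \sum_i a_i W_{0,e_i} - Lg \gg_{\rho}$, a cross term, and a pure gradient term. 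The cross term splits into a piece $\ll Lg, \sum_{i,j}a_iD_{i,j}[\eta(e_j)-\eta(0)] \gg_\rho$, which vanishes by (\ref{eq:cal1}) since gradients are orthogonal to $L\Cy_0$, and a piece computed via (\ref{eq:cal2}):
\begin{displaymath}
2\ll \sum_i a_i W_{0,e_i},\sum_{i',j} a_{i'} D_{i',j}[\eta(e_j)-\eta(0)] \gg_\rho = -2\chi(\rho) a^{*}Da.
\end{displaymath}
The pure gradient term, using $\ll\eta(e_j)-\eta(0),\eta(e_k)-\eta(0)\gg_\rho = \chi(\rho)[D^{-1}]_{j,k}$ and $DD^{-1}=I$, collapses to $\chi(\rho)a^{*}Da$. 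Adding the three contributions,
\begin{displaymath}
0 = \inf_{g\in\Cy_0}\ll \sum_{i=1}^d a_i W_{0,e_i} - Lg\gg_\rho \; - \; \chi(\rho)\, a^{*}Da,
\end{displaymath}
which rearranges to the first equality in (\ref{eq:inf}).

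There is really no serious obstacle; everything is algebraic once the key identities (\ref{eq:cal1})--(\ref{eq:cal3}) and (\ref{eq:current}) are in hand. The only point that deserves a line of care is the bilinear expansion of the semi-norm, since $\ll \cdot\gg_\rho$ is a semi-norm (not a priori a norm); but the polarization identity (\ref{eq:semiin}) legitimately defines the semi-inner product and the computation above is therefore valid in $\mathcal{H}_\rho$.
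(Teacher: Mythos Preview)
Your proposal is correct and follows essentially the same route as the paper. The paper derives the first equality by expanding $\inf_g \ll \sum_i a_i W_{0,e_i} + \sum_{i,j} a_i D_{i,j}[\eta(e_j)-\eta(0)] - Lg \gg_\rho = 0$ using exactly the orthogonality relations (\ref{eq:cal1}), (\ref{eq:cal2}) and $\ll \eta(e_j)-\eta(0),\eta(e_k)-\eta(0)\gg_\rho = \chi(\rho)[D^{-1}]_{j,k}$ that you invoke, and it obtains the second equality directly from (\ref{eq:current}); your write-up simply makes the three-term bilinear expansion more explicit.
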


The second identity follows from equation (\ref{eq:current}). Moreover, this formula determines the matrix $D$ since $D$ is symmetric.

It is now easy to prove the diffusion coefficient is continuous including at the boundary of $[-1,1]$. From the explicit formulas for $\chi(\rho)$, $\langle (\nabla_{0,e_1}\eta(0))^2 \rangle_{\rho}$ and $\langle \Psi_{0,e_1}^2 \rangle_{\rho}$, we have that $D(\rho)$ converges to $C_+I$ as $\rho \uparrow 1$ and $C_-I$ as $\rho \downarrow -1$. 
\begin{thm}\label{thm:conti}
The diffusion coefficient $D(\rho)$ is continuous on $[-1,1]$. Moreover it converges to $C_+I$ as $\rho \uparrow 1$ and $C_-I$ as $\rho \downarrow -1$. 
\end{thm}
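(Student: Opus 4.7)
The argument splits into interior continuity on $(-1,1)$ and boundary asymptotics at $\rho = \pm 1$.

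\emph{Interior continuity.} By Theorem \ref{thm:diagonal} (invoked in the proof of Theorem \ref{thm:main1} above) the matrix $D(\rho)$ is the scalar $d(\rho) I$, so only the scalar $d(\rho)$ needs to be controlled. From the identity $\ll \eta(e_j) - \eta(0), \eta(e_k) - \eta(0) \gg_\rho = \chi(\rho) [D(\rho)^{-1}]_{j,k}$ stated just before Theorem \ref{thm:coe}, specialization to $j = k = 1$ yields
\[
d(\rho) = \frac{\chi(\rho)}{\ll \eta(e_1) - \eta(0) \gg_\rho}.
\]
Theorem \ref{thm:converge} provides continuity of $\rho \mapsto \ll h \gg_\rho$ on $[-1, 1]$ for each fixed $h \in \Cy_0$, as the uniform limit of the continuous interpolated finite-volume variances $V_l^h$. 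Since $\chi$ is continuous and strictly positive on $(-1, 1)$, and Cauchy--Schwarz in $\mathcal{H}_\rho$ applied to (\ref{eq:cal2})--(\ref{eq:cal3}) gives the positive lower bound $\ll \eta(e_1) - \eta(0) \gg_\rho \ge \chi(\rho)^2/\D_{0, e_1}(\nu_\rho; \eta(0)) > 0$, the quotient $d(\rho)$ is continuous on the open interval.

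\emph{Boundary at $\rho = 1$.} The case $\rho \downarrow -1$ follows by symmetry under $\eta \mapsto -\eta$, which exchanges $C_+$ and $C_-$. Let $p_a := \nu_\rho\{a\}$ for $a \in \{-1, 0, 1\}$. As $\rho \uparrow 1$ one has $p_+ \to 1$ and $p_0, p_- \to 0$, and the reversibility relation $C_A p_+ p_- = C_C p_0^2$ forces $p_- = O(p_0^2)$. Expanding $\D_{0, e_1}(\nu_\rho; \eta(0)) = \langle c_{0,e_1}(\pi_{0,e_1}\eta(0))^2 \rangle_\rho$ configuration-by-configuration over the five active types yields
\[
\chi(\rho) = p_0 + O(p_0^2), \qquad \D_{0, e_1}(\nu_\rho; \eta(0)) = C_+ p_+ p_0 + 2(C_A + 2C_E) p_+ p_- + C_- p_0 p_- = C_+ p_0 + O(p_0^2).
\]
Inserting the trivial test function $g = 0$ into the variational formula of Theorem \ref{thm:coe} gives the upper bound
\[
d(\rho) \le \frac{\D_{0, e_1}(\nu_\rho; \eta(0))}{\chi(\rho)} \longrightarrow C_+.
\]

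\emph{Lower bound and conclusion.} The matching lower bound $\liminf_{\rho \uparrow 1} d(\rho) \ge C_+$ is the main technical obstacle. Via the dual identity $d(\rho) = \chi(\rho)/\ll \eta(e_1) - \eta(0) \gg_\rho$ it amounts to the estimate $\ll \eta(e_1) - \eta(0) \gg_\rho \le \chi(\rho)/C_+ + o(\chi(\rho))$. Under Theorem \ref{thm:diagonal} the identity (\ref{eq:matrixq}) reads $\eta(e_1) - \eta(0) + (1/d(\rho)) W_{0, e_1} \in \overline{L\Cy_0}$ in $\mathcal{H}_\rho$, so (\ref{eq:current}) gives
\[
\frac{\chi(\rho)}{d(\rho)} = \ll \eta(e_1) - \eta(0) \gg_\rho = \D_{0, e_1}\bigl(\nu_\rho; -\tfrac{1}{d(\rho)}\eta(0) + \Gamma_{g^*}\bigr) + \sum_{i \ge 2} \D_{0, e_i}(\nu_\rho; \Gamma_{g^*})
\]
for the optimizing cylinder function $g^* = g^*(\rho) \in \Cy_0$. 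The task is to show that as $\rho \uparrow 1$ the Dirichlet-form contributions involving $\Gamma_{g^*}$ are of order $o(p_0)$, so that the leading-order equation becomes $\chi/d \sim (1/d^2)\D_{0, e_1}(\nu_\rho; \eta(0)) = (C_+/d^2) p_0$, forcing $d \to C_+$. Heuristically this holds because in the $\rho \to 1$ limit the effective dynamics is simple symmetric exclusion between $+$'s and holes, which is gradient with current $W_{0,e_1} = C_+(\eta(0) - \eta(e_1))$ identically (so no $L\Cy_0$ correction is needed), while all nongradient corrections are supported on local configurations containing a $-$ particle and therefore carry $\nu_\rho$-probability $O(p_0^2)$. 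Together with the symmetric analysis at $\rho = -1$ yielding $d(\rho) \to C_-$, this completes the proof.
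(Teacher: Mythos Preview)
Your interior-continuity argument via $d(\rho)=\chi(\rho)/\ll\eta(e_1)-\eta(0)\gg_\rho$ and Theorem~\ref{thm:converge} is fine and matches the paper's implicit reasoning. The upper bound $d(\rho)\le \D_{0,e_1}(\nu_\rho;\eta(0))/\chi(\rho)\to C_+$ is also correct.

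The gap is the boundary lower bound. You say yourself that ``the matching lower bound $\liminf_{\rho\uparrow 1}d(\rho)\ge C_+$ is the main technical obstacle'' and then give only a heuristic: you posit an optimizer $g^*=g^*(\rho)$ and argue informally that its Dirichlet contribution is $o(p_0)$ because nongradient corrections live on configurations containing a $-$ particle. This is not a proof. No such optimizer need exist in $\Cy_0$ (only in the closure $\overline{L\Cy_0}$), and even granting approximate optimizers you have no uniform control on $\Gamma_{g^*}$ as $\rho\uparrow 1$; the claimed $O(p_0^2)$ bound would require an a priori estimate on the size of the correction that you never establish.

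The paper bypasses this entirely by a sandwich argument. It uses the two-sided bound of Theorem~\ref{thm:diagonal},
\[
\frac{\chi(\rho)}{4\langle \Psi_{0,e_1}^2\rangle_\rho}\;\le\; d(\rho)\;\le\;\frac{\langle(\nabla_{0,e_1}\eta(0))^2\rangle_\rho}{\chi(\rho)},
\]
and then checks from the explicit formulas for $\chi(\rho)$, $\langle(\nabla_{0,e_1}\eta(0))^2\rangle_\rho$ and $\langle\Psi_{0,e_1}^2\rangle_\rho$ that both sides converge to $C_+$ (resp.\ $C_-$) as $\rho\uparrow 1$ (resp.\ $\rho\downarrow -1$). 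You already invoke Theorem~\ref{thm:diagonal} for the diagonality $D=dI$; had you also used its lower bound, the boundary limit would follow by pure computation with no need to analyze optimizers.
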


From the continuity of the diffusion coefficient we have
\begin{cor}\label{cor:uniconti}
Let $D$ be the matrix defined in Theorem \ref{thm:coe}. Then, for each $1 \le i \le d$,
\begin{displaymath}
\inf_{\mathfrak{f} \in \Cy_0} \sup_{-1 \le \rho \le 1} \ll W_{0,e_i} + \sum_{j=1}^d D_{i,j}(\rho)[\eta(e_j) - \eta(0)] -L\mathfrak{f}(\eta) \gg_{\rho} =0 .
\end{displaymath}
\end{cor}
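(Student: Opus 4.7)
Set $V_i(\rho):=W_{0,e_i}+\sum_{j=1}^d D_{i,j}(\rho)[\eta(e_j)-\eta(0)]$. The discussion preceding Theorem~\ref{thm:coe} establishes $V_i(\rho)\in\overline{L\Cy_0}$ in $\mathcal{H}_\rho$, equivalently $\inf_{\mathfrak f\in\Cy_0}\ll V_i(\rho)-L\mathfrak f\gg_\rho=0$ at each fixed $\rho\in[-1,1]$. The plan is to upgrade this pointwise identity to a uniform-in-$\rho$ version using compactness of $[-1,1]$ together with two continuity inputs already in hand: continuity of $\rho\mapsto D_{i,j}(\rho)$ on $[-1,1]$ (Theorem~\ref{thm:conti}) and continuity of $\rho\mapsto\ll h\gg_\rho$ on $[-1,1]$ for each fixed $h\in\Cy_0$ (Theorem~\ref{thm:converge}).

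Fix $\e>0$. For each $\rho_0\in[-1,1]$ choose $\mathfrak f_{\rho_0}\in\Cy_0$ with $\ll V_i(\rho_0)-L\mathfrak f_{\rho_0}\gg_{\rho_0}<\e/4$. Theorem~\ref{thm:converge} applied to the fixed cylinder function $V_i(\rho_0)-L\mathfrak f_{\rho_0}$ yields an open neighborhood of $\rho_0$ on which the bound persists up to $\e/4$. The identity $\ll\eta(e_j)-\eta(0),\eta(e_k)-\eta(0)\gg_\rho=\chi(\rho)Q_{j,k}(\rho)$, combined with uniform continuity of $D_{i,j}$ and boundedness of $\chi(\rho)Q(\rho)$ on $[-1,1]$, further shrinks this to a neighborhood $U_{\rho_0}$ on which $\ll V_i(\rho)-V_i(\rho_0)\gg_\rho<\e/4$. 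The triangle inequality for $\ll\cdot\gg_\rho^{1/2}$ then gives $\ll V_i(\rho)-L\mathfrak f_{\rho_0}\gg_\rho<\e$ for every $\rho\in U_{\rho_0}$. Compactness of $[-1,1]$ extracts a finite subcover $\{U_{\rho_k}\}_{k=1}^M$ with associated cylinder functions $\mathfrak f_1,\ldots,\mathfrak f_M$, so that $\min_{1\le k\le M}\ll V_i(\rho)-L\mathfrak f_k\gg_\rho<\e$ uniformly in $\rho$.

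The main technical obstacle is to pass from this minimum over a finite family to a single cylinder function $\mathfrak f\in\Cy_0$ doing the job uniformly. I would follow the gluing strategy used in the analogous step for the generalized exclusion process in \cite{KL}: pick a continuous partition of unity $\{\phi_k\}_{k=1}^M$ on $[-1,1]$ subordinate to the cover and set $\mathfrak f:=\sum_{k=1}^M\phi_k(\eta^\ell(0))\mathfrak f_k$, where $\eta^\ell(0)$ is the local empirical density on $\La_\ell$. For $\ell$ large, the correction from $L$ acting on each factor $\phi_k(\eta^\ell(0))$ is small in $\ll\cdot\gg_\rho$, while the equivalence of ensembles (supported by the spectral gap estimate of Subsection~3.4) forces $\phi_k(\eta^\ell(0))\to\phi_k(\rho)$ in the semi-norm; convexity of $\ll\cdot\gg_\rho^{1/2}$ then gives $\ll V_i(\rho)-L\mathfrak f\gg_\rho^{1/2}\le\sum_k\phi_k(\rho)\ll V_i(\rho)-L\mathfrak f_k\gg_\rho^{1/2}+\text{(vanishing error)}\le\sqrt\e+o(1)$ uniformly in $\rho$. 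A diagonal extraction $\e\downarrow 0$, $\ell\to\infty$ concludes.
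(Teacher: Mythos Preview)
Your proposal is correct and is precisely the argument the paper is deferring to. The paper gives no proof of this corollary; it simply writes ``From the continuity of the diffusion coefficient we have'' and states the result, leaving the reader to import the standard argument from \cite{KL} (Chapter~7). What you have sketched---pointwise approximation in $\mathcal{H}_\rho$, extension to neighborhoods via continuity of $\rho\mapsto\ll h\gg_\rho$ and of $D$, finite subcover by compactness, then gluing through $\mathfrak f=\sum_k\phi_k(\eta^\ell(0))\mathfrak f_k$---is exactly that argument, so there is nothing to compare: you have supplied the proof the paper omits.

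Two minor remarks on execution. First, for the bound $\ll V_i(\rho)-V_i(\rho_0)\gg_\rho<\e/4$ you only need boundedness of $Q(\rho)=D(\rho)^{-1}$ itself on $[-1,1]$, which follows directly from Theorem~\ref{thm:conti} (continuity of $D$ on the compact $[-1,1]$ together with $D(\pm1)=C_\pm I>0$); the factor $\chi(\rho)$ is unnecessary. Second, in the gluing step the replacement of $\phi_k(\eta^\ell(0))$ by $\phi_k(\rho)$ inside $\ll\cdot\gg_\rho$ is the delicate point; it is justified not by equivalence of ensembles alone but by going back to the finite-volume variance characterization of $\ll\cdot\gg_\rho$ in Proposition~\ref{prop:variance}, where the law of large numbers for $\eta^\ell(0)$ under $\nu_{\La_l,K_l}$ (with $l\gg\ell$) does the work. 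Since you cite \cite{KL} for this step, this is fine.
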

This result together with (\ref{eq:variance}), the definition of $\tilde{V}_i^{\mathfrak{f},l}$ and Theorem \ref{thm:converge} concludes the proof of Theorem \ref{thm:keythm}.

We conclude this section proving that the diffusion coefficient $D$ is a diagonal matrix and it has the same diagonal component, therefore $D(\rho)=d(\rho)I$.

\begin{thm}\label{thm:diagonal}
There exists a continuous function $d(\rho)$ on $[-1,1]$ such that $D(\rho)=d(\rho)I$ and 
\begin{displaymath}
\frac{\chi(\rho)}{4\langle \Psi_{0,e_1}^2 \rangle_{\rho}}\le d(\rho) \le \frac{\langle (\nabla_{0,e_1}\eta(0))^2 \rangle_{\rho}}{\chi(\rho)}. 
\end{displaymath}
\end{thm}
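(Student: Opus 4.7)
The plan is to prove three things: (a) $D(\rho)$ is a scalar multiple of the identity, (b) an upper bound via the trivial test function in the variational formula, and (c) a lower bound via a Cauchy--Schwarz estimate with a carefully chosen test function. Continuity on $[-1,1]$ is already supplied by Theorem \ref{thm:conti}.

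\textbf{Step 1 (Scalar diagonal form).} The full hyperoctahedral group $B_d$ of signed coordinate permutations acts on $\Z^d$, lifts to configuration space preserving $\nu_\rho$, and commutes with the generator $L_N$: each rate $c_b(\eta)$ depends only on the occupancy pair $(\eta(x),\eta(y))$ at the endpoints of $b$, and every $R\in B_d$ carries nearest-neighbor bonds to nearest-neighbor bonds. Hence $\ll\cdot,\cdot\gg_\rho$ is $B_d$-invariant, and the formula in Theorem \ref{thm:coe} yields $a^{*}D(\rho)a=(Ra)^{*}D(\rho)(Ra)$ for every $R\in B_d$. Coordinate permutation invariance forces all diagonal entries to coincide and all off-diagonal entries to coincide; a single-axis reflection $e_k\mapsto-e_k$ then flips the sign of every off-diagonal entry touching index $k$, so the off-diagonals vanish. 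Thus $D(\rho)=d(\rho)I$. One identifies this common scalar with the single-bond $d(\rho)$ from the introduction by specializing $a=e_1$ and using $g=\eta(0)$, whose formal sum satisfies $\pi_{0,e_i}\Gamma_{\eta(0)}=\pi_{0,e_i}(\eta(0)+\eta(e_i))=0$ for every $i$ by the local conservation of total charge under bond moves.

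\textbf{Step 2 (Upper bound).} Evaluating the variational formula (\ref{eq:inf}) at $a=e_1$ and $g=0$ immediately gives
\begin{displaymath}
d(\rho)=e_1^{*}D(\rho)e_1\le\frac{1}{\chi(\rho)}\D_{0,e_1}(\nu_{\rho};\eta(0))=\frac{\langle(\nabla_{0,e_1}\eta(0))^2\rangle_\rho}{\chi(\rho)}.
\end{displaymath}

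\textbf{Step 3 (Lower bound).} Use the single-bond representation $d(\rho)\chi(\rho)=\inf_{g}\D_{0,e_1}(\nu_{\rho};\eta(0)+\Gamma_g)$. For any bounded test function $Z$, Cauchy--Schwarz in $L^2(c_{0,e_1}d\nu_{\rho})$ gives
\begin{displaymath}
\D_{0,e_1}(\nu_{\rho};\eta(0)+\Gamma_g)\cdot\langle c_{0,e_1}Z^2\rangle_\rho\ge\bigl|\langle c_{0,e_1}\pi_{0,e_1}(\eta(0)+\Gamma_g)\,Z\rangle_\rho\bigr|^2.
\end{displaymath}
I will choose $Z$ so that: (i) $\langle c_{0,e_1}(\pi_{0,e_1}\Gamma_g)Z\rangle_\rho=0$ for every cylinder $g$, making the right-hand side $g$-independent; (ii) $|\langle c_{0,e_1}(\pi_{0,e_1}\eta(0))Z\rangle_\rho|=\chi(\rho)/2$; and (iii) $\langle c_{0,e_1}Z^2\rangle_\rho=\langle\Psi^2_{0,e_1}\rangle_\rho$. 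Condition (i) is secured, via the detailed balance identity (\ref{eq:dbc}) combined with the per-bond conservation $\pi_{0,e_1}(\eta(0)+\eta(e_1))=0$, by picking $Z$ from the span of local functions that are invariant along the $(0,e_1)$-swap orbits (up to the $c_{0,e_1}$-weighting); a natural candidate is a suitable multiple of the local gradient $\eta(0)-\eta(e_1)$. Substituting yields $d(\rho)\chi(\rho)\ge\chi(\rho)^2/(4\langle\Psi^2_{0,e_1}\rangle_\rho)$.

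The main obstacle is Step 3: producing a single $Z$ that simultaneously satisfies the three Cauchy--Schwarz requirements against the five distinct transition types at the bond $(0,e_1)$ (two same-species swaps, one opposite-species exchange, an annihilation, and a creation). Exhibiting such a $Z$ requires careful bookkeeping using both the bond-level detailed balance and the local conservation law; once this test function is in hand, the lower bound reduces to a one-line estimate.
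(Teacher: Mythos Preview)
Your Step 1 reaches $D(\rho)=d(\rho)I$ by a genuinely different route from the paper. The paper's proof does not work with the variational formula at all: it first invokes the Green--Kubo representation of $D$ (citing Spohn), passes to the Kipnis--Varadhan resolvent identity $\int_0^\infty\sum_x E_{\nu_\rho}[W_{0,e_i}e^{Lt}\tau_xW_{0,e_j}]\,dt=\lim_{\lambda\to0}\sum_xE_{\nu_\rho}[W_{0,e_i}\tau_xg_\lambda^j]$, and then applies the reflection $\theta_i$ through the hyperplane $x_i=\tfrac12$ to the resolvent equation to show the off-diagonal current correlations vanish. Your argument---$B_d$-invariance of the quadratic form in Theorem~\ref{thm:coe}---is more elementary and avoids Green--Kubo entirely, though you should be explicit that under the reflection $e_k\mapsto-e_k$ the bond $(0,e_k)$ is carried to $(0,-e_k)$, so one must compose with a translation and use $\D_b=\D_{b'}$ to land back in the original sum; this is routine but not automatic.

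Your Step 2 is correct. Note that the paper's written proof addresses only the scalar structure $D=dI$ and is silent on both inequalities, so here you are supplying more than the paper does.

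Step 3 is a genuine gap. You correctly isolate the three conditions a Cauchy--Schwarz test function $Z$ would have to satisfy, but you do not construct $Z$, and the candidate $\eta(0)-\eta(e_1)$ is not shown to meet condition (i). The obstruction you name is real: the bond transitions at $(0,e_1)$ are not all involutions---the triple $(1,-1)\to(0,0)\to(-1,1)\to(1,-1)$ is a $3$-cycle---so the usual ``$Z$ antisymmetric under the swap'' trick that kills $\pi_{0,e_1}\Gamma_g$ via a change of variables does not apply directly. As it stands, Step 3 is a plan, not a proof; until $Z$ is exhibited and (i)--(iii) are checked against all five transition types, the lower bound remains open in your write-up.
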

\begin{proof}
Because of the symmetry of the dynamics, it is obvious that $D$ has the same diagonal component. It remains to show that $D$ is a diagonal matrix.

According to \cite{S}, the diffusion coefficient matrix defined by the variational formula (\ref{eq:inf}) coincides with the diffusion coefficient matrix defined by the Green-Kubo formula based on the current-current correlation function:
\begin{displaymath}
a^{*}D(\rho)a:=\frac{1}{\chi(\rho)} \Big\{\sum_{i=1}^d a_i^2\langle (\nabla_{0,e_i} \eta(0))^2 \rangle_{\rho} -\frac{1}{2}\int^{\infty}_0 \sum_{x \in \Z^d}E_{\nu_\rho}[W_ae^{Lt}\tau_xW_a]dt \Big\}
\end{displaymath}
where $W_a:=\sum_i a_i W_{0,e_i}$. Therefore, we have only to prove that 
\begin{displaymath}
\int^{\infty}_0 \sum_{x \in \Z^d}E_{\nu_\rho}[W_{0,e_i}e^{Lt}\tau_xW_{0,e_j}]dt=0
\end{displaymath}
for all $i \neq j$. In \cite{KV}, Kipnis and Varadhan proved some equivalent relation about the central limit theorem variance. We can use one of them. It holds that
\begin{displaymath}
\int^{\infty}_0 \sum_{x \in \Z^d}E_{\nu_\rho}[W_{0,e_i}e^{Lt}\tau_xW_{0,e_j}]dt=\lim_{\la \to 0}\sum_xE_{\nu_\rho}[W_{0,e_i}\tau_xg_{\la}^j]
\end{displaymath}
where $g_{\la}^j$ is a solution of the resolvent equation $\la g_{\la}^j - L g_{\la}^j=W_{0,e_j}$.

Denote by $\theta_i$ the reflection operator with respect to $\frac{1}{2}e_i$ along the $e_i$ direction, namely for $x \in \Z^d$, $\theta_i x = (x_1, x_2, ...,  x_{i-1}, -x_i+1, x_{i+1}, ... ,x_d)$. We may extend $\theta_i$ to configurations in $\chi^d$ and to functions on $\chi^d$ naturally:
\begin{displaymath}
(\theta_i \eta)(x):=\eta(\theta_i x) \quad (\theta_i f)(\eta):=f(\theta_i \eta).
\end{displaymath}
Then, for $i \neq j$, 
\begin{displaymath}
\la \ \theta_i \tau_x g_{\la}^j - L \ \theta_i \tau_x g_{\la}^j= \theta_i \tau_x W_{0,e_j} = \tau_{\theta_i x - 2e_i} W_{0,e_j}.
\end{displaymath}
Therefore, since $\nu_\rho$ is translation invariant and a product measure,
\begin{displaymath}
E_{\nu_\rho}[W_{0,e_i}\tau_xg_{\la}^j]=E_{\nu_\rho}[\theta_iW_{0,e_i} \theta_i\tau_xg_{\la}^j]= E_{\nu_\rho}[-W_{0,e_i} \tau_{\theta_i x - 2e_i}g_{\la}^j].
\end{displaymath}
Since the map $x \to (\theta_i x - 2e_i)$ is a bijection,
\begin{displaymath}
\sum_xE_{\nu_\rho}[W_{0,e_i}\tau_xg_{\la}^j]=\sum_xE_{\nu_\rho}[-W_{0,e_i}\tau_xg_{\la}^j].
\end{displaymath}
Thus, $\sum_xE_{\nu_\rho}[W_{0,e_i}\tau_xg_{\la}^j]=0$ for all $\la$.
\end{proof}

\begin{rem}
If we assume the gradient condition $C_++C_- - C_A-2C_E=0$, then $W_{0,e_i}=h(\eta(0))-h(\eta(e_i))$ with $h(-1)=C_+, h(0)=0$ and $h(-1)=-C_-$. In this case, $\ll W_{0,e_i},Lg \gg_{\rho}=0$ holds. Therefore $d(\rho)= \langle (\nabla_{0,e_i} \eta(0))^2 \rangle_{\rho}=-\frac{\Phi^{\prime}(\rho)}{2}(C_+ - C_-)+\frac{1}{2}(C_+ + C_-)$.
\end{rem}

\subsection{Spectral Gap}
In this section, we prove the spectral gap for the two-species exclusion process on finite $d$-dimensional cubes. For a positive integer $N$, we denote by $\Om_N$ the box $\{1,...,N\}^d$ and by $\mathcal{Y}_N$ the space of configurations $\{-1,0,1\}^{\Om_N}$. Let $L_{\Om_N}$ be the generator of the two-species exclusion process on $\Om_N$ with free boundary conditions:
\begin{displaymath}
L_{\Om_N}f(\eta)=\sum_{x,y \in \Om_N, |x-y|=1}L_{xy}f(\eta)
\end{displaymath}
where $L_{xy}$ was defined in (\ref{eq:generator}).

For $-|\Om_N| \le K \le |\Om_N|$, we denote by $\mathcal{Y}_{N,K}$ the hyperplane $\{\eta; \sum_{x \in \Om_N}\eta(x)=K\}$ and by $\mu_{N,K}$ the product measure $\nu_{\rho}$ on $\mathcal{Y}_N$ conditioned on the hyperplane $\mathcal{Y}_{N,K}$:
\begin{displaymath}
\mu_{N,K}(\cdot)=\nu_{\rho}( \ \cdot \ | \sum_{x \in \Om_N}\eta(x)=K).
\end{displaymath}
As in the previous sections, expected values with respect to the measure $\mu_{N,K}$ are denoted by $\langle \cdot \rangle_{N,K}$:
\begin{displaymath}
\langle f \rangle_{N,K}:=\int_{\mathcal{Y}_{N,K}}f(\eta)\mu_{N,K}(d\eta).
\end{displaymath}

In the main theorem of this section we prove that the generator $L_{\Om_N}$ in $L^2(\mu_{N,K})$ has a spectral gap of order at least $N^{-2}$.
\begin{thm}\label{thm:spectralgap}
There exists a positive constant $C$, which only depends on the constants $C_+, C_-, C_A, C_C$ and $C_E$, such that for every positive integer $N$, every integer $-|\Om_N| \le K \le |\Om_N|$ and every function $f$ in $L^2(\mu_{N,K})$ satisfying $ \langle f \rangle _{N,K}=0$,
\begin{displaymath}
 \langle f^2 \rangle _{N,K} \le CN^2 \langle -L_{\Om_N}f,f \rangle _{N,K}.
\end{displaymath}
\end{thm}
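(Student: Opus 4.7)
The plan is to prove the spectral-gap estimate by induction on the side length $N$, following the Lu--Yau martingale strategy as developed in Kipnis--Landim for symmetric exclusion-type dynamics with a single conservation law. Set
\begin{displaymath}
W(N) := \sup_{-|\Om_N| \le K \le |\Om_N|} \gamma(N,K)^{-1},
\end{displaymath}
where $\gamma(N,K)$ denotes the spectral gap of $-L_{\Om_N}$ on $L^2(\mu_{N,K})$; the goal is $W(N) \le CN^2$ with $C$ depending only on the rates $C_+,C_-,C_A,C_C,C_E$. The base step, for $N$ less than some fixed $N_0$, is immediate: because $C_A>0$ and $C_C>0$, the combined exchange, creation and annihilation moves connect any two configurations on the finite hyperplane $\mathcal{Y}_{N_0,K}$, so the chain is irreducible and its spectral gap is bounded below by a strictly positive constant depending only on the rates and on $N_0$.

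For the inductive step, I split the cube $\Om_N$ along its first coordinate into two adjacent rectangles $B_1, B_2$, each with side length at most $\lceil N/2 \rceil$ in that coordinate, and denote $K_j := \sum_{x \in B_j}\eta(x)$. Writing the standard variance decomposition
\begin{displaymath}
\mathrm{Var}_{\mu_{N,K}}(f) = \bigl\langle \mathrm{Var}_{\mu_{N,K}}(f \mid K_1) \bigr\rangle_{N,K} + \mathrm{Var}_{\mu_{N,K}}\bigl(\langle f \mid K_1\rangle_{N,K}\bigr),
\end{displaymath}
the first summand splits, conditional on $K_1$, into the variances on $B_1$ and $B_2$ of the respective canonical measures, to which the inductive hypothesis applies and yields a bound of order $(N/2)^2 \langle -L_{\Om_N}f, f\rangle_{N,K}$. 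The second summand equals the variance of the function $\bar f(K_1) := \langle f \mid K_1\rangle_{N,K}$ on the admissible range of $K_1$ (an interval of size $O(N^d)$ traversed in steps of $\pm 1$, or $\pm 2$ for the annihilation/creation bonds across the interface).

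The main obstacle is the second summand. A naive estimate using only the interface bonds would lose a factor of $N^d$ and break the iteration, so what is needed is a moving-particle (one-block) lemma: for every admissible pair $(K_1, K_1\pm 1)$, one constructs a short sequence of allowed jumps realizing the transition and bounds its cost in terms of the full Dirichlet form $\langle -L_{\Om_N}f, f\rangle_{N,K}$. The positivity of both $C_A$ and $C_C$ in Case 1 is essential here, since it makes every such transition achievable without any frustrating restriction on the local configuration: a unit of charge can be transported across the interface by a chain of $C_+$- and $C_-$-exchanges supplemented, where needed, by annihilating a $(+,-)$ pair on one side and re-creating it on the other. Bounding the resulting path lengths by a constant independent of $N$ and invoking reversibility, the projected variance is controlled by $c N^2 \langle -L_{\Om_N}f,f\rangle_{N,K}$, and combined with the conditional estimate this yields a recursion of the form $W(N) \le W(\lceil N/2\rceil) + c' N^2$, which iterates from the base case to $W(N) \le C N^2$ and closes the induction.
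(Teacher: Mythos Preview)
Your strategy is genuinely different from the paper's, and as written it has a real gap in the handling of the projected variance.

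The paper does \emph{not} use a Lu--Yau halving recursion. Instead it (i) compares the original Dirichlet form to that of a simplified generator $\tilde L_{\Om_N}$ with $C_+=C_-=C_A=C_E=1$, $C_C=\b$; (ii) compares $\tilde L_{\Om_N}$ to its mean-field version $\tilde L_{\Om_N}^m$ (any two sites interact) via the usual path-counting argument, which produces the factor $N^2$; and (iii) proves an $O(1)$ spectral gap for $\tilde L_{\Om_N}^m$ by projecting onto the number $X$ of $-$particles. On the $\sigma$-field of $X$ the process is a one-dimensional birth--death chain whose drift is strictly monotone, and a general lemma on strongly asymmetric reversible chains gives an $O(1)$ gap; orthogonally to $X$ one is reduced to a multicolour mean-field exclusion, handled by induction on the number of colours. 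No recursion in $N$, and no equivalence-of-ensembles input, is needed.

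In your scheme the weak point is the sentence ``Bounding the resulting path lengths by a constant independent of $N$ \ldots the projected variance is controlled by $cN^2\langle -L_{\Om_N}f,f\rangle_{N,K}$''. Changing $K_1$ by one unit requires transporting a charge across the interface, and the length of such a path is $O(N)$, not $O(1)$; moreover the projected variable $K_1$ ranges over an interval of size $O(N^d)$, so a naive telescoping estimate gives a factor that is far too large in $d\ge 2$. The Lu--Yau method does close, but only after one proves a quantitative one-block estimate of the form $(\bar f(K_1{+}1)-\bar f(K_1))^2 \le C N^{2-d}\langle -L_{\Om_N}f,f\rangle_{N,K}$ (averaging over the $O(N^{d-1})$ interface bonds and using paths of length $O(N)$) together with a local CLT/equivalence-of-ensembles bound showing that the effective range of $K_1$ contributing to $\mathrm{Var}(\bar f)$ has width $O(N^{d/2})$. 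None of this is in your proposal, and without it the recursion $W(N)\le W(\lceil N/2\rceil)+c'N^2$ is unjustified. A secondary issue is that after halving you land on rectangles $(\lceil N/2\rceil)\times N^{d-1}$ rather than cubes, so the induction hypothesis as stated does not apply; you would need to run the induction over boxes with bounded aspect ratio.
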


We start with showing that $\langle -L_{\Om_N}f,f \rangle _{N,K}$ is bounded below by $C\langle -\tilde{L}_{\Om_N}f,f \rangle _{N,K}$ with some constant $C$ where $\tilde{L}_{\Om_N}$ acting on functions as
\begin{displaymath}
\tilde{L}_{\Om_N}f(\eta)=\sum_{x,y \in \Om_N, |x-y|=1}\tilde{L}_{xy}f(\eta)
\end{displaymath}
and
\begin{align*}
\tilde{L}_{xy}&f(\eta)=[\Psi^{x,y}_{1,0}(\eta) + \Psi^{x,y}_{0,-1}(\eta) + \Psi^{x,y}_{-1,1}(\eta)](f(\eta^{x,y})-f(\eta)) \\
&+ \Psi^{x,y}_{1,-1}(\eta)(f(\eta^{x=0,y=0})-f(\eta)) + \b \Psi^{x,y}_{0,0}(\eta)(f(\eta^{x=-1,y=1})-f(\eta)).
\end{align*}
Notice that $\tilde{L}_{\Om_N}$ is the generator of two-species exclusion process with $C_+=C_-=C_A=C_E=1$ and $C_C=\b$. The probability measures $\mu_{N,K}$ are also reversible for the Markov process with generator $\tilde{L}_{\Om_N}$.
\begin{lem}
If we assume that $C_+,C_-,C_A,C_C$ are all positive constants and $C_E$ is a nonnegative constant, there exists a positive constant $C$ such that for every positive integer $N$, every integer $-|\Om_N| \le K \le |\Om_N|$, every function $f$ in $L^2(\mu_{N,K})$ and every directed bond $b=(x,y)$ we have
\begin{displaymath}
\langle (-\tilde{L}_{xy}-\tilde{L}_{yx})f,f \rangle _{N,K} \quad \le \quad C  \langle (-L_{xy}-L_{yx})f,f \rangle _{N,K}
\end{displaymath}
\end{lem}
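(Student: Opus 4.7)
The plan is to bound the $\tilde{L}$-Dirichlet form transition by transition. Using the identity $\langle -(L_{xy}+L_{yx})f,f\rangle_{N,K}=\langle c_{xy}(\pi_{xy}f)^{2}\rangle_{N,K}$ from Section~2 (and its obvious analogue for $\tilde{L}$), each of the two Dirichlet forms at the bond $(x,y)$ decomposes into a sum of nonnegative contributions, one for each of the five possible local transitions allowed by the dynamics: the particle/hole swaps $(1,0)\leftrightarrow(0,1)$ and $(0,-1)\leftrightarrow(-1,0)$, the exchange $(-1,1)\leftrightarrow(1,-1)$, and the two annihilation--creation pairs $(1,-1)\leftrightarrow(0,0)$ and $(-1,1)\leftrightarrow(0,0)$.

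For the two particle/hole swaps and for the two annihilation--creation transitions, the comparison is immediate. In $\tilde{L}$ the rates are $1$ for the first three types and $\beta=C_{C}/C_{A}$ for the creation step, whereas in $L$ they are $C_{+}$, $C_{-}$, $C_{A}$ and $C_{C}$ respectively. Since $\beta/C_{C}=1/C_{A}$, the ratio of the $\tilde{L}$-rate to the $L$-rate on any one of these four transitions is bounded by $\max\{1/C_{+},1/C_{-},1/C_{A}\}$, and hence the corresponding four pieces of the $\tilde{L}$-Dirichlet form are pointwise dominated by this constant times the matching pieces of $\langle -(L_{xy}+L_{yx})f,f\rangle_{N,K}$.

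The only delicate transition is the exchange $(-1,1)\leftrightarrow(1,-1)$, whose rate $C_{E}$ in $L$ is only assumed nonnegative and may vanish. Here I would replace the direct exchange by a two-step path through the empty state: for any $\eta$ with $(\eta(x),\eta(y))=(-1,1)$, denote by $\eta^{\mathrm{sw}}$ its exchange partner (with values $(1,-1)$) and by $\eta^{\mathrm{ann}}$ its annihilation image (with values $(0,0)$). Then
\begin{displaymath}
(f(\eta^{\mathrm{sw}})-f(\eta))^{2}\le 2(f(\eta^{\mathrm{sw}})-f(\eta^{\mathrm{ann}}))^{2}+2(f(\eta^{\mathrm{ann}})-f(\eta))^{2}.
\end{displaymath}
Because $\nu_{\rho}$ is product and symmetric under $+1\leftrightarrow -1$, and $\eta,\eta^{\mathrm{sw}}$ lie on the same canonical hyperplane, $\mu_{N,K}(\eta)=\mu_{N,K}(\eta^{\mathrm{sw}})$. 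Multiplying by $\mu_{N,K}(\eta)$ and summing over all such $\eta$, the two terms on the right are recognized as $2/C_{A}$ times annihilation contributions already present in the $(-1,1)\leftrightarrow(0,0)$ and $(1,-1)\leftrightarrow(0,0)$ pieces of $\langle -(L_{xy}+L_{yx})f,f\rangle_{N,K}$.

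Combining the four immediate bounds with this two-step estimate yields the lemma with $C$ depending only on $C_{+},C_{-},C_{A}$. The main obstacle, and the only genuinely nontrivial step in the argument, is the exchange transition in the case $C_{E}=0$: here the direct swap is absent from $L$ and must be routed through $(0,0)$, which is exactly why the hypothesis $C_{A},C_{C}>0$ is essential.
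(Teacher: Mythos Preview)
Your argument is correct and follows essentially the same route as the paper: compare the two Dirichlet forms transition by transition, observe that four of the five pieces are controlled by a rate ratio, and handle the exchange piece (which may have rate $C_E=0$ in $L$) by routing $(-1,1)\to(0,0)\to(1,-1)$ via the elementary inequality $(a+b)^2\le 2a^2+2b^2$ and a change of variables. The paper carries out the same two-step detour, only expressing the first term after the change of variables as a creation contribution (picking up the factor $\beta$ from detailed balance) rather than as the second annihilation contribution from $L_{yx}$; the two bookkeepings are equivalent.

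One small remark: your justification that $\mu_{N,K}(\eta)=\mu_{N,K}(\eta^{\mathrm{sw}})$ invokes ``symmetry under $+1\leftrightarrow-1$'', which $\nu_\rho$ does not have for general $\rho$. The correct (and sufficient) reason is the one you also state: $\nu_\rho$ is product with identical one-site marginals, so permuting the values at two sites preserves the measure, and both configurations lie on the same hyperplane. This does not affect the validity of the proof.
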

\begin{proof}
It is enough to prove the lemma assuming $C_E=0$. Especially, we only have to bound the term $\langle \Psi^{x,y}_{-1,1}(\eta)(f(\eta^{x,y})-f(\eta))^2 \rangle _{N,K}$ by the term $C \langle (-L_{xy}-L_{yx})f,f \rangle _{N,K}$ with some constant $C$. By the Cauchy-Shwartz inequality, we have
\begin{align*}
& \langle \Psi^{x,y}_{-1,1}(\eta)(f(\eta^{x,y})-f(\eta))^2 \rangle _{N,K} =  \langle \Psi^{x,y}_{1,-1}(\eta)(f(\eta^{x,y})-f(\eta))^2 \rangle _{N,K} \\
&\le \ 2  \langle \Psi^{x,y}_{1,-1}(\eta)[(f(\eta^{x,y})-f(\eta^{x=0,y=0}))^2 + (f(\eta^{x=0,y=0})-f(\eta))^2] \rangle _{N,K} \\
\end{align*}
and the last expression is written as
\begin{displaymath}
2\b  \langle \Psi^{x,y}_{0,0}(\eta)(f(\eta^{x=-1,y=1})-f(\eta))^2 \rangle _{N,K} +2  \langle \Psi^{x,y}_{1,-1}(\eta)(f(\eta^{x=0,y=0})-f(\eta))^2 \rangle _{N,K}
\end{displaymath}
by change of variables. Therefore, we can obtain the desirable estimate with the constant $C:=\min\{C_+,C_-, \frac{C_A}{3}\}$. 
\end{proof}

Now, to conclude the proof of Theorem \ref{thm:spectralgap}, we have only to prove the theorem as follows:
\begin{thm}\label{thm:spectraltilde}
There exists a positive constant $C$ such that for every positive integer $N$, every $-|\Om_N| \le K \le |\Om_N|$ and every function $f$ in $L^2(\mu_{N,K})$ satisfying $ \langle f \rangle _{N,K}=0$,
\begin{displaymath}
\langle f^2 \rangle _{N,K} \le CN^2 \langle -\tilde{L}_{\Om_N}f,f \rangle _{N,K}.
\end{displaymath}
\end{thm}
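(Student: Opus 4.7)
The plan is a conditional variance decomposition that exploits the finer conservation laws of the exchange part of the dynamics. Write $\tilde{L}_{\Om_N}=\tilde{L}^{\mathrm{ex}}_{\Om_N}+\tilde{L}^{\mathrm{AC}}_{\Om_N}$, where $\tilde{L}^{\mathrm{ex}}$ collects the three exchange terms (those with coefficients $\Psi^{x,y}_{1,0}$, $\Psi^{x,y}_{0,-1}$, $\Psi^{x,y}_{-1,1}$) and $\tilde{L}^{\mathrm{AC}}$ collects the annihilation and creation terms. The exchange part conserves the triple of species counts $(n_+,n_0,n_-)$, whereas $\tilde{L}_{\Om_N}$ conserves only the charge $K=n_+-n_-$. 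Since $K$ is fixed on $\mathcal{Y}_{N,K}$, the extra freedom is indexed by $n_+$; set $\mathcal{G}=\si(n_+)$ and apply the conditional variance identity
\begin{displaymath}
\langle f^2\rangle_{N,K}=\big\langle\mathrm{Var}_{N,K}(f\,|\,\mathcal{G})\big\rangle_{N,K}+\mathrm{Var}_{N,K}\!\big(\langle f\,|\,\mathcal{G}\rangle_{N,K}\big).
\end{displaymath}

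For the inner term, conditioning on $\mathcal{G}$ fixes $(n_+,n_0,n_-)$ and makes $\mu_{N,K}$ uniform on the resulting sector. On this sector $\tilde{L}^{\mathrm{ex}}_{\Om_N}$ is the generator of a three-state symmetric colored exclusion process (with $+$, $-$ and holes as the colors, each adjacent swap at rate one), which is covered by the spectral-gap estimate for colored simple exclusion from \cite{Q}; since retaining the $(-1,1)\leftrightarrow(1,-1)$ swap only enlarges the Dirichlet form, one obtains
\begin{displaymath}
\mathrm{Var}_{N,K}(f\,|\,\mathcal{G})\le CN^2\,\langle-\tilde{L}^{\mathrm{ex}}_{\Om_N}f,f\rangle_{N,\mathcal{G}}
\end{displaymath}
uniformly in the sector. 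Averaging over $\mathcal{G}$ and discarding the nonnegative $\tilde{L}^{\mathrm{AC}}$ contribution bounds the inner term by $CN^2\langle-\tilde{L}_{\Om_N}f,f\rangle_{N,K}$.

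For the outer term, $g(m):=\langle f\,|\,n_+=m\rangle_{N,K}$ depends on a single integer. The annihilation and creation moves project to a reversible birth-death chain on the range of $n_+$ with invariant distribution $\pi(m):=\mu_{N,K}(n_+=m)$ and effective rates proportional to $\mathbb{E}_{\mu_{N,K}}[\sum_{|x-y|=1}\Psi^{x,y}_{0,0}\mid n_+=m]$ (creation) and $\mathbb{E}_{\mu_{N,K}}[\sum_{|x-y|=1}\Psi^{x,y}_{1,-1}\mid n_+=m]$ (annihilation). A conditional Jensen inequality applied to $\langle-\tilde{L}^{\mathrm{AC}}_{\Om_N}f,f\rangle_{N,K}$ bounds the outer Dirichlet form from below by the Dirichlet form of this projected chain, so it suffices to exhibit a spectral gap for the projected chain that is bounded below independently of $N$. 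This is the expected behaviour since the effective rates are $\Theta(N^d)$ for $m$ in the bulk of $\pi$, while $\pi$ is concentrated on a window of width $O(N^{d/2})$ around its mean by a local central limit theorem for the sum of the independent $\pm 1,0$ variables conditioned on their total.

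The principal obstacle is the quantitative execution of this last step, in particular the boundary regime where $m$ is close to $0$ or to its maximum and either $(0,0)$- or $(+,-)$-bonds become scarce. Standard weighted Hardy inequalities for one-dimensional birth-death chains, together with local combinatorial estimates on $\mu_{N,K}$, should handle this at the cost of considerable bookkeeping; in the bulk regime the rate lower bounds reduce to two-point estimates for the conditioned product measure, and in the tails one must show that their contribution to the outer Dirichlet form is negligible. Combining the inner and outer bounds then yields the claimed $O(N^{-2})$ spectral gap uniformly in $K$.
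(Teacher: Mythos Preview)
Your decomposition via $\mathcal G=\si(n_+)$ is precisely the one the paper uses (with $n_-$ in place of $n_+$, which is the same since $K$ is fixed), and the inner term is handled correctly by the colored-exclusion gap. The genuine gap is in the outer term. The ``conditional Jensen inequality'' you invoke, namely
\[
\langle-\tilde L^{\mathrm{AC}}_{\Om_N}f,f\rangle_{N,K}\ \ge\ \mathcal D_{\mathrm{proj}}\big(E_{\mu_{N,K}}[f\mid n_+]\big),
\]
is false in general: conditional expectation contracts $L^2(\mu_{N,K})$ but need \emph{not} contract a Dirichlet form. The obstruction is concrete here. The instantaneous AC rate out of a configuration $\eta$ is the number of nearest-neighbour $(0,0)$- and $(1,-1)$-pairs, which is not a function of $n_+(\eta)$; consequently $\eta\mapsto n_+(\eta)$ is not a lumping of the nearest-neighbour dynamics. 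For instance, if $A\subset\{n_+=m\}$ consists of configurations with no $(0,0)$- or $(1,-1)$-neighbours (such configurations exist as soon as $n_0$ and $n_\pm$ are all positive), then $\langle-\tilde L^{\mathrm{AC}}1_A,1_A\rangle_{N,K}=0$, while the projected function $m'\mapsto\mu_{N,K}(A\mid n_+=m')$ has a nonzero increment at $m$, so the projected Dirichlet form is strictly positive. Your final paragraph treats the boundary behaviour of the birth--death chain as the principal obstacle; in fact the projection step preceding it already fails.

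The paper avoids this by reversing the order of the two reductions. It first compares the nearest-neighbour form to the mean-field form $\langle-\tilde L^{m}_{\Om_N}f,f\rangle_{N,K}$ via a moving-particle path argument (this is where the factor $N^2$ enters), and only then performs the $\si(n_+)$ decomposition on the mean-field generator. In mean field every ordered pair $(x,y)$ interacts, so the total annihilation and creation rates depend on $\eta$ only through $(n_+,n_0,n_-)$; thus $n_+(\eta_t)$ is an honest Markov chain, the projection is exact rather than an inequality, and one lands on an explicit birth--death chain whose gap is controlled by an elementary drift criterion (Proposition~\ref{prop:asym} and Lemma~\ref{lem:asym}). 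If you wish to keep your order of operations, you would need an additional comparison replacing the configuration-dependent AC rates by their conditional means before projecting---but that comparison is essentially the mean-field reduction the paper performs.
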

The proof of this theorem relies on the study of the spectral gap of the two-species exclusion process of mean field type. This is the Markov process on $\mathcal{Y}_N$, whose generator $\tilde{L}_{\Om_N}^m$ acting on functions $f$ as
\begin{displaymath}
\tilde{L}_{\Om_N}^m f(\eta)=\frac{1}{|\Om_N|}\sum_{x,y \in \Om_N}\tilde{L}_{xy}f(\eta).
\end{displaymath}
Notice that the probability measures $\mu_{N,K}$ are also reversible for the Markov process with generator $\tilde{L}_{\Om_N}^m$. This generator has a spectral gap in $L^2(\mu_{N,K})$ of order at least 1 as stated in the next theorem.

\begin{thm}\label{thm:spectraluni}
There exists a finite constant $C$ such that for every positive integer $N$, every integer $-|\Om_N| \le K \le |\Om_N|$ and every function $f$ in $L^2(\mu_{N,K})$ satisfying $ \langle f \rangle _{N,K}=0$,
\begin{displaymath}
 \langle f^2 \rangle _{N,K} \le C \langle -\tilde{L}_{\Om_N}^m f,f \rangle _{N,K}.
\end{displaymath}
\end{thm}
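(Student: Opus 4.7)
The plan is to use the full permutation symmetry of the mean-field generator $\tilde L^m_{\Om_N}$ to split $L^2(\mu_{N,K})$ into a ``microscopic'' piece carried by the exchanges and a ``macroscopic'' piece carried by the creation/annihilation, then bound each piece separately. Since $K=n_+-n_-$ is conserved, the unique free macroscopic variable on $\mathcal Y_{N,K}$ is $n_-(\eta)=\sum_{x\in\Om_N}1_{\{\eta(x)=-1\}}$. Set $\bar g(n):=\langle f\mid n_-=n\rangle_{N,K}$, $g(\eta):=\bar g(n_-(\eta))$, $h:=f-g$; orthogonality yields $\langle f^2\rangle_{N,K}=\langle g^2\rangle_{N,K}+\langle h^2\rangle_{N,K}$. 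Correspondingly, decompose
\[
\tilde L^m_{\Om_N}\;=\;\tilde L^{\mathrm{exch}}+\tilde L^{\mathrm{ca}},
\]
where $\tilde L^{\mathrm{exch}}$ collects the exchange indicators $\Psi^{x,y}_{1,0},\Psi^{x,y}_{0,-1},\Psi^{x,y}_{-1,1}$ and $\tilde L^{\mathrm{ca}}$ the indicators $\Psi^{x,y}_{1,-1},\beta\Psi^{x,y}_{0,0}$. Each piece is individually $\mu_{N,K}$-reversible; because $g$ is invariant under exchanges and $\tilde L^{\mathrm{ca}}g$ is a function of $n_-$ alone (with $h$ fiberwise centered), the cross Dirichlet terms vanish and
\[
\langle-\tilde L^m f,f\rangle_{N,K}\;=\;\mathcal E^{\mathrm{exch}}(h)+\mathcal E^{\mathrm{ca}}(g)+\mathcal E^{\mathrm{ca}}(h)\;\ge\;\mathcal E^{\mathrm{exch}}(h)+\mathcal E^{\mathrm{red}}(\bar g),
\]
where $\mathcal E^{\mathrm{red}}$ is the Dirichlet form of the induced chain on $n_-$.

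To control $\langle h^2\rangle_{N,K}$, observe that exchanges preserve both $n_+$ and $n_-$, so $\tilde L^{\mathrm{exch}}$ restricts to each joint fiber $F_{n_+,n_-}$, on which $\mu_{N,K}$ is uniform. Read on that fiber the dynamics is the rate $1/|\Om_N|$ random transposition chain acting on $|\Om_N|$ three-colored sites, which by Diaconis--Shahshahani has spectral gap bounded below by an absolute constant $c_1>0$ independent of $|\Om_N|,n_+,n_-$. Applying this gap fiberwise and using that $h$ is centered on every fiber gives
\[
\langle h^2\rangle_{N,K}\;\le\;c_1^{-1}\,\mathcal E^{\mathrm{exch}}(h).
\]

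To control $\langle g^2\rangle_{N,K}$ I use that $\eta\mapsto n_-(\eta)$ is strongly lumpable for $\tilde L^{\mathrm{ca}}$ (each rate depends on $\eta$ only through $(\eta(x),\eta(y))$ and $\mu_{N,K}$ is exchangeable). The induced chain is the birth--death chain with $b(n)=\tfrac{\beta}{|\Om_N|}(|\Om_N|-2n-K)(|\Om_N|-2n-K-1)$ and $d(n)=\tfrac{1}{|\Om_N|}(n+K)n$; its stationary measure, the push-forward $\mu_{\mathrm{red}}$ of $\mu_{N,K}$, has the explicit form
\[
\mu_{\mathrm{red}}(n)\;\propto\;\frac{\beta^{\,n}}{(n+K)!\,n!\,(|\Om_N|-2n-K)!}.
\]
The ratio $\mu_{\mathrm{red}}(n+1)/\mu_{\mathrm{red}}(n)=b(n)/d(n+1)$ is strictly decreasing and the second discrete difference of $\log\mu_{\mathrm{red}}$ is $-1/(n+K)-1/n-O(1/(|\Om_N|-2n-K))$, which is bounded away from $0$ near the unique mode uniformly in $N,K$; a standard conductance or direct variational estimate for strictly log-concave birth--death measures then delivers a Poincar\'e inequality $\mathrm{Var}_{\mu_{\mathrm{red}}}(\bar g)\le c_2^{-1}\mathcal E^{\mathrm{red}}(\bar g)$ with $c_2>0$ uniform in $N,K$. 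Adding both bounds yields $\langle f^2\rangle_{N,K}\le\max(c_1^{-1},c_2^{-1})\,\langle-\tilde L^m f,f\rangle_{N,K}$, completing the proof.

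The main obstacle is securing the uniform Poincar\'e constant $c_2$ for $\mu_{\mathrm{red}}$. When $|K|/|\Om_N|\to1$ either the typical value $n^*/|\Om_N|$ or the empty fraction $n_0^*/|\Om_N|$ approaches $0$, so the second-difference of $\log\mu_{\mathrm{red}}$ near the mode could in principle degenerate; one must verify, using the detailed balance identity $\beta p_0^2=p_+p_-$ that pins the equilibrium of the linearised chain, that the restoring drift stays of order one in these boundary regimes so that $c_2$ remains bounded away from zero and depends only on $\beta$.
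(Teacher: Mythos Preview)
Your overall architecture matches the paper's exactly: condition on $n_-=X(\eta)$, bound the fiberwise-centered piece $h=f-E[f\mid X]$ by the exchange Dirichlet form, and bound the $X$-measurable piece $g=E[f\mid X]$ via the induced birth--death chain on $n_-$. Your Dirichlet-form decomposition and the vanishing of the cross terms are correct, and for the exchange piece your appeal to the random-transposition gap (Diaconis--Shahshahani, or equivalently the Bernoulli--Laplace/multi-urn gap) is a clean substitute for the paper's inductive proof of the same fact (Proposition~\ref{prop:multispectral}).

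The genuine gap is in the birth--death piece. Your key claim, that the second discrete difference of $\log\mu_{\mathrm{red}}$ is ``bounded away from $0$ near the unique mode uniformly in $N,K$'', is false: your own formula $-\tfrac{1}{n+K}-\tfrac{1}{n}-O\bigl(\tfrac{1}{|\Om_N|-2n-K}\bigr)$ is $\Theta(1/|\Om_N|)$ at the mode whenever $n^*$, $n^*+K$, and $|\Om_N|-K-2n^*$ are each of order $|\Om_N|$ (the generic regime). Strict log-concavity with curvature $O(1/|\Om_N|)$ does not by itself produce an $O(1)$ Poincar\'e constant, and ``a standard conductance or direct variational estimate'' does not bridge this without bringing in the size of the rates. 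What actually makes the gap $O(1)$ is that the rates $b(n),d(n)$ are themselves $\Theta(|\Om_N|)$, so the \emph{drift} $D(n)=b(n)-d(n)$ satisfies $D(n)-D(n+1)\ge c_\beta>0$ uniformly in $N,K$ (equivalently $-D'(x)\ge c_\beta$). The paper establishes precisely this and then invokes the ramification/drift criterion of Proposition~\ref{prop:asym}: $|D(x)|\ge \tfrac{c_\beta}{2}|x-e_0|$ yields the Poincar\'e inequality with constant depending only on $\beta$. Your closing caveat about the ``restoring drift staying of order one'' is pointing at the right quantity; make that the argument, and drop the curvature-of-$\log\mu_{\mathrm{red}}$ claim.
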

Before proving Theorem \ref{thm:spectraluni}, we show that Theorem \ref{thm:spectraltilde} is an easy corollary of this result.
\begin{proof}[Proof of Theorem \ref{thm:spectraltilde}]
For each pair $\{x,y\} \in \Om_N \times \Om_N$, we determine a path inside $\Om_N$ which connects $x=(x_1,...,x_d)$ and $y=(y_1,...,y_d)$ as follows: First we connect $x$ and $(y_1,x_2,...,x_d)$ only by changing the first coordinate one by one. Then, $(y_1,x_2,x_3...,x_d)$ and $(y_1,y_2,x_3...,x_d)$ are connected by changing the second coordinate and this procedure is continued. We denote the sequence of bonds appearing in this path by $b_1=(z_1,w_1), b_2,...,b_M=(z_M,w_M)$ and the set of these bonds by $\mathbb{B}(x,y)$. For a configuration $\eta$ satisfying $r_{x,y}(\eta) \neq 0$, let define a sequence of configurations $(\xi_i)_{0 \le i \le 2M-1}$ such that $\xi_0=\eta$, $\xi_{2M-1}=\eta^{(x,y)}$ as follows : $\xi_0:=\eta$, $\xi_j:=(\xi_{j-1})^{z_j,w_j}$ for $j \le M-1$, $\xi_M=(\xi_{M-1})^{(z_M,w_M)}$ and $\xi_j:=(\xi_{j-1})^{z_{2M-j},w_{2M-j}}$ for $M+1 \le j \le 2M-1$. Then, we have
\begin{align*}
\langle \Psi^{x,y}_{1,0}(\eta)  (f(\eta^{(x,y)}) & -f(\eta))^2 \rangle _{N,K}  = \langle \Psi^{x,y}_{1,0}(\eta)[\sum_{0 \le i \le 2M-1}(f(\xi_{i+1})-f(\xi_i))]^2 \rangle _{N,K}\\
	& \le (2M-1) \sum_{0 \le i \le 2M-1}  \langle r_{z_{i+1},w_{i+1}}(\eta)(f(\eta^{z_{i+1},w_{i+1}})-f(\eta))^2 \rangle _{N,K}\\
	& \le 4dN  \sum_{b \in \mathbb{B}(x,y)}  \langle (-\tilde{L}_b-\tilde{L}_{b^{\prime}})f,f \rangle _{N,K}.\\
\end{align*}
Similarly, we have
\begin{displaymath}
\langle (-\tilde{L}_{xy}-\tilde{L}_{yx})f,f \rangle_{N,K} \le CN \sum_{b \in \mathbb{B}(x,y)} \langle (-\tilde{L}_b-\tilde{L}_{b^{\prime}})f,f \rangle _{N,K}\\
\end{displaymath}
for some positive constant $C$.

Applying Theorem \ref{thm:spectraluni}, for all functions $f$ in $L^2(\mu_{N,K})$ satisfying $ \langle f \rangle _{N,K}=0$ we obtain that   
\begin{align*}
 \langle f^2 \rangle _{N,K} & \le C \frac{1}{|\Om_N|}\sum_{x,y \in \Om_N} \langle -\tilde{L}_{xy}f,f \rangle _{N,K}  \le C \frac{N}{|\Om_N|}\sum_{x,y \in \Om_N}\sum_{b \in \mathbb{B}(x,y)} \langle (-\tilde{L}_b-\tilde{L}_{b^{\prime}})f,f \rangle _{N,K} \\
	& \le C \frac{N}{|\Om_N|}\sum_{b \in (\Om_N)^*} \langle -\tilde{L}_bf,f \rangle _{N,K} \times \#\{(x,y) \in \Om_N \times \Om_N; b \ \text{or} \ b^{\prime} \in \mathcal{B}(x,y)\} \\
	& \le CN^2 \sum_{b \in (\Om_N)^*} \langle -\tilde{L}_bf,f \rangle _{\La_N,K} 
\end{align*}
where a constant $C$ changes each line.
\end{proof}

\begin{proof}[Proof of Theorem \ref{thm:spectraluni}]
There is a duality between $+$particles and $-$particles, i.e., $+$particles evolve with the same dynamics as $-$particles do under the generator $\tilde{L}_{\Om_N}^u$. Therefore, we assume that $0 \le K \le |\Om_N|$.

Let $X(\eta)$ denote the number of sites occupied by $-$particles in the configuration $\eta$:
\begin{displaymath}
X(\eta):=\sum_{x \in \Om_N}1_{\{\eta(x)=-1\}}.
\end{displaymath}
We first project $f$ on the $\sigma$-field generated by $X$ and on its orthogonal:
\begin{equation}
 \langle f^2 \rangle _{N,K}  =  \langle  (f-E[f|X])^2  \rangle _{N,K} +  \langle (E[f|X])^2 \rangle _{N,K}. \label{eq:twoterm}
\end{equation}
We consider the two terms separately. 
Let us define $L^1_{xy}$ for each ordered pair $(x,y)$ by
\begin{displaymath}
(L^1_{xy} f)(\eta) =[\Psi^{x,y}_{1,0}(\eta) + \Psi^{x,y}_{0,-1}(\eta) + \Psi^{x,y}_{-1,1}(\eta)](f(\eta^{x,y})-f(\eta)) 
\end{displaymath}
and define $L^1_{\Om_N}$ by
\begin{displaymath}
L^1_{\Om_N}f(\eta)=\frac{1}{|\Om_N|}\sum_{x,y \in \Om_N}L^1_{xy}f(\eta).
\end{displaymath}
To bound the first term in (\ref{eq:twoterm}) by the Dirichlet form $\langle -L^1_{\Om_N}f, f \rangle$, we use a general result concerning the spectral gap for multispecies exclusion processes. 

We introduce some notation. For positive integers $r$, $N$ and nonnegative integers $K_1,...K_r$ such that $\sum_{i=1}^r K_i\le N$, define $\Si_{N,K_1,...K_r}^r$ as the hyperplane of all configurations of $\Si_N^r:=\{0,1,2...r\}^N$ with $K_i$ sites occupied by the $i$-particles:
\begin{displaymath}
\Si_{N,K_1,...K_r}^r=\{\eta \in \Si_N^r; \sum_{j=1}^N \ 1_{\{\eta(j)=i\}}=K_i \ 1 \le i \le r\}
\end{displaymath}
and $m_{N,K_1,...K_r}$ as the uniform probability measure on $\Si_{N,K_1,...K_r}^r$. As before we denote by $\langle \cdot \rangle_{N,K_1,...K_r}$, the expectation with respect to the measure $m_{N,K_1,...K_r}$. Consider the process that exchanges the value of configurations between any two sites at a fixed rate. Its generator $L^r_N$ is given by 
\begin{displaymath}
 L_N^rf(\eta) = \frac{1}{N}\sum_{1 \le j,k \le N}(f(\eta^{j,k})-f(\eta))
\end{displaymath}
where 
\begin{displaymath}
 \eta^{j,k}(z) = \begin{cases}
	\eta(z)  & \text{if $z \neq x,y$} \\
	\eta(k) & \text{if $z=j$} \\
	\eta(j)  & \text{if $z=k$.} 
\end{cases} 
\end{displaymath}
A simple computation shows that the uniform measures $m_{N,K_1,...K_r}$ are reversible for this process. We prove that the spectral gap of the generator $L^r_N$ is of order $O(1)$.
\begin{prop}\label{prop:multispectral}
There exists a positive constant $C=C(r)$ such that for every positive integer $N$, every set of nonnegative integers $K_1,...K_r$ such that $\sum_{i=1}^r K_i \le N$ and every function $f$ in $L^2(m_{N,K_1,...K_r})$ satisfying $ \langle f \rangle_{N,K_1,...K_r}=0$,
\begin{equation}
 \langle f^2 \rangle_{N,K_1,...K_r} \le C \langle -L^r_Nf,f \rangle_{N,K_1,...K_r}. \label{eq:multispectral}
\end{equation}
\end{prop}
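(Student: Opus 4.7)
My plan is to induct on $r$, producing a constant $C(r)$ depending only on $r$. For the base case $r=1$, the generator $L^1_N$ is precisely the mean-field Kawasaki (Bernoulli--Laplace) process on $\{0,1\}^N$ with $K_1$ particles, whose spectral gap is classically known to be bounded below by an absolute positive constant (one clean derivation diagonalizes the generator via representations of the symmetric group $S_N$).

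For the inductive step assume the bound holds with constant $C(r-1)$ for $r-1$ species. Since $\sum_{i=1}^r K_i\le N$, there is some index with $K_i\le N/r$; relabel so this holds for $i=r$, giving $N-K_r\ge N(r-1)/r$. Let $\mathcal{F}_r = \sigma(1_{\{\eta(x)=r\}}:x\in\Om_N)$ and split the variance as
\begin{displaymath}
\langle f^2\rangle_{N,K_1,\ldots,K_r} = \big\langle\mathrm{Var}(f\mid\mathcal{F}_r)\big\rangle_{N,K_1,\ldots,K_r} + \mathrm{Var}\bigl(\mathbb{E}[f\mid\mathcal{F}_r]\bigr).
\end{displaymath}
The projection $\eta\mapsto(1_{\{\eta(x)=r\}})_x$ pushes $m_{N,K_1,\ldots,K_r}$ to the uniform measure on $\Si^1_{N,K_r}$ and pushes $L^r_N$ to exactly the Bernoulli--Laplace generator (swaps between two sites of the same species-$r$ status are invisible at the projected level). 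So the base case bounds $\mathrm{Var}(\mathbb{E}[f\mid\mathcal{F}_r])$ by $C(1)$ times the Dirichlet form of the projection; combining this with the measure-preserving bijection $\eta\mapsto\eta^{j,k}$ between the two fibers that differ at sites $j,k$ of distinct species-$r$ status, and the conditional Jensen inequality, dominates it by $C(1)\langle -L^r_N f,f\rangle_{N,K_1,\ldots,K_r}$.

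For the inner term, condition on $\mathcal{F}_r$: the restriction of $\eta$ to the $N-K_r$ non-$r$ sites is uniform on arrangements of $K_1,\ldots,K_{r-1}$ particles of types $1,\ldots,r-1$, and the dynamics generated inside each fiber consists of mean-field swaps between non-$r$ sites at rate $1/N$. Applying the inductive hypothesis on this $(r-1)$-species subsystem yields
\begin{displaymath}
\mathrm{Var}(f\mid\mathcal{F}_r) \le C(r-1)\cdot\frac{N}{N-K_r}\,\big\langle -\widetilde L_{\eta_r}f,f\big\rangle_{\mathrm{cond}},
\end{displaymath}
where the factor $N/(N-K_r)$ corrects the rate mismatch between $1/N$ in $L^r_N$ and the $1/(N-K_r)$ of the natural $(r-1)$-species mean-field generator on $N-K_r$ sites, and the sub-generator $\widetilde L_{\eta_r}$ is dominated by $L^r_N$. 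Our choice of $K_r$ bounds the prefactor by $r/(r-1)$; taking expectation and adding the two contributions closes the induction with $C(r)=C(1)+C(r-1)\cdot r/(r-1)$.

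The main obstacle lies in the outer-variance step: one must carefully verify that the pushforward of $L^r_N$ under the single-species projection really is mean-field Kawasaki (so the base case applies), and then use the explicit fiber bijection together with conditional Jensen to lift the resulting Dirichlet-form bound on $\Si^1_{N,K_r}$ back to the full Dirichlet form on $\mathcal{Y}_N$ without losing a bad prefactor. Everything else is bookkeeping, and the minimality choice for $K_r$ is precisely what prevents the rate-mismatch factor in the inner step from blowing up.
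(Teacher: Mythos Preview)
Your proof is correct and follows essentially the same route as the paper: induction on $r$ with the Bernoulli--Laplace base case, a variance decomposition into a projected part (handled by the base case plus the Dirichlet-form contraction under conditional expectation, which is exactly your fiber-bijection-plus-Jensen step) and a fiberwise part (handled by the induction hypothesis with a bounded rate-mismatch factor). The only cosmetic difference is that the paper singles out the \emph{empty} type (relabeling so that $K_0$ is minimal, giving the bound $K_0\le N/(r+1)$) while you single out a particle type $r$ with $K_r\le N/r$; your choice is arguably cleaner since the conditional $(r-1)$-species problem then arises without any relabeling.
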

The proof of this proposition is postponed to the last part of this section. To apply the estimate in (\ref{eq:multispectral}), we rewrite the first term of in the right hand side of (\ref{eq:twoterm}) as
\begin{displaymath}
 \langle  (f-E[f|X])^2  \rangle _{N,K} = \sum_{l=0}^{\frac{|\Om_N| - K}{2}} \mu_{N,K}(\{X=l\})  \langle f_l^2 \rangle _{\La_N,K,l}
\end{displaymath}
where $ \langle  \cdot  \rangle _{\La_N,K,l}$ stands for the expectation with respect to the uniform measure on the set of configurations $\eta \in \{-1, 0, 1\}^{\Om_N}$ satisfying $\sum_{x \in \Om_N}1_{\{\eta(x)=1\}}=K+l$ and $\sum_{x \in \Om_N}1_{\{\eta(x)=-1\}}=l$, and $f_l$ stands for the function on this set defined by $f_l(\eta)=f(\eta)-E[f|X](\eta)$. 
By Proposition \ref{prop:multispectral}, we have that
\begin{eqnarray*}
 \langle  (f-E[f|X])^2  \rangle _{N,K} &\le& C\sum_{l=0}^{\frac{|\Om_N| - K}{2}} \mu_{N,K}(\{X=l\}) \langle -L^1_{\Om_N}f_l,f_l \rangle _{\La_N,K,l} \\
	&=& C \langle -L^1_{\Om_N}f,f \rangle _{N,K} \\
	&\le&  C\langle -\tilde{L}_{\Om_N}f,f \rangle _{N,K}, \\
\end{eqnarray*}
notice that $f_l$ can be replaced by $f$ to have the second line.

Next, let us consider the second term of (\ref{eq:twoterm}). Let $\eta_t$ be the Markov process with the generator $\tilde{L}_{\Om_N}^m$. Since the original geometry of the process evolving according to the generator $L_{\Om_N}$ is lost, $X(\eta_t)$ is a Markov process and the state space of this Markov process is $\chi:=\{0,1,...,\frac{|\Om_N|-K}{2}\}$. A simple computation shows that its generator is given by 
\begin{displaymath}
\mathcal{L}_{N,K}f(l)=r(l,l-1)(f(l-1)-f(l))+r(l,l+1)(f(l+1)-f(l)) \quad \forall l \in \chi
\end{displaymath}
where $r(l,l-1)=\frac{l(K+l)}{|\Om_N|}$ and $r(l,l+1)=\frac{(|\Om_N|-K-2l)(|\Om_N|-K-2l-1)\b}{|\Om_N|}$. For fixed $N$ and $K$, denote by $\tilde{m}_{N,K}$ the probability measure $\mu_{N,K}X^{-1}$ on $\chi$:
\begin{displaymath}
\tilde{m}_{N,K}(l):=\mu_{N,K}(\{X=l\}).
\end{displaymath}
For $X$-measurable function $f$, define a function $\tilde{f}:\chi \to \R$ by $\tilde{f}(l):=f(\eta)$ for some $\eta$ such that $X(\eta)=l$. A simple computation shows that $\tilde{L}^m_Nf=\mathcal{L}_{N,K}\tilde{f}$ and $\langle f \rangle _{N,K} = \langle \tilde{f} \rangle _{\tilde{m}_{N,K}}$. Therefore, $\langle -\tilde{L}^u_Nf, f \rangle _{N,K}= \langle -\mathcal{L}_{N,K}\tilde{f},\tilde{f} \rangle _{\tilde{m}_{N,K}}$ and $\langle f^2 \rangle _{N,K} = \langle \tilde{f}^2 \rangle _{\tilde{m}_{N,K}}$ hold. To conclude the proof of Theorem \ref{thm:spectraluni}, we have only to prove Lemma \ref{lem:asym} below.
\end{proof}

\begin{lem}\label{lem:asym}
There exists a constant $C_{\b}$ such that for any integer $N$ and $K$ satisfying $0 \le K \le |\Om_N|$
\begin{displaymath}
\langle f^2 \rangle _{\tilde{m}_{N,K}} \le C_{\b} \langle -\mathcal{L}_{N,K}f,f \rangle _{\tilde{m}_{N,K}}
\end{displaymath}
for all functions $f:\chi \to \R$ satisfying $ \langle f \rangle _{\tilde{m}_{N,K}}=0$ where $\tilde{m}_{N,K}$, $\mathcal{L}_{N,K}$ and $\chi$ were defined above. 
\end{lem}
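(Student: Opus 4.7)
The plan is to view $\mathcal L_{N,K}$ as a reversible one-dimensional birth-death chain on the finite interval $\chi=\{0,1,\ldots,n\}$, $n=\lfloor(|\Omega_N|-K)/2\rfloor$, with birth and death rates $b_l:=r(l,l+1)$, $d_l:=r(l,l-1)$, and to bound its spectral gap by the Hardy--Miclo characterization. Choosing $l_0$ to be a median of $\tilde m_{N,K}$, this characterization gives, up to a universal multiplicative factor,
\[
C_P(\mathcal L_{N,K})\;\asymp\;\max\!\left(\sup_{m>l_0}\tilde m_{N,K}\bigl([m,n]\bigr)\sum_{k=l_0}^{m-1}\frac{1}{\tilde m_{N,K}(k)\,b_k},\ \sup_{m<l_0}\tilde m_{N,K}\bigl([0,m]\bigr)\sum_{k=m}^{l_0-1}\frac{1}{\tilde m_{N,K}(k)\,b_k}\right),
\]
so it suffices to bound the right-hand side by a constant depending only on $\beta$.

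First, I compute $\tilde m_{N,K}$ explicitly. Since the canonical measure $\mu_{N,K}$ is independent of the reference density, taking $\rho=0$ yields
\[
\tilde m_{N,K}(l)\;=\;\frac{1}{Z_{N,K}}\,\binom{|\Omega_N|}{K+l,\;|\Omega_N|-K-2l,\;l}\,\beta^l.
\]
The ratio $\tilde m_{N,K}(l+1)/\tilde m_{N,K}(l)=\beta(|\Omega_N|-K-2l)(|\Omega_N|-K-2l-1)/[(K+l+1)(l+1)]$ is strictly decreasing in $l$, so $\tilde m_{N,K}$ is strictly log-concave on $\chi$, with a unique mode $l^\ast$ characterized by $\beta(|\Omega_N|-K-2l^\ast)^2\approx l^\ast(K+l^\ast)$; note that $l^\ast/|\Omega_N|$ is determined by $\beta$ and $\rho:=K/|\Omega_N|$ alone.

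Setting $M:=|\Omega_N|-K$, the main estimate is that whenever $M$ exceeds a $\beta$-dependent threshold $M_\beta$, a Stirling expansion combined with log-concavity yields Gaussian-type concentration of $\tilde m_{N,K}$ about $l^\ast$ on an appropriate scale $\sigma$ (with width and leading constants depending only on $\beta$ and $\rho$), while $b_l$ is comparable to a $\beta$-dependent constant times $M^2/|\Omega_N|$ throughout the Gaussian bulk. Inserting these two estimates into the Hardy sums and performing the standard comparison between partial sums of $e^{u^2/2}$ and the Gaussian tail $\int_m^\infty e^{-u^2/2}du$ gives the required uniform bound. In the complementary regime $M<M_\beta$, the state space has cardinality at most $M_\beta/2$, and the spectral gap is bounded below by an elementary argument for a finite irreducible birth-death chain whose transition rates are controlled in terms of $\beta$.

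The main technical obstacle is uniformity in the regime $\rho\to 1$: here $l^\ast$ approaches the edge $\{0\}$ of $\chi$, the Gaussian width $\sigma$ can become comparable to $l^\ast$, and the two-sided bulk approximation must be replaced by a one-sided near-boundary concentration estimate. In this boundary regime, however, the linear death rate $d_l\approx \rho\,l$ provides an explicit restoring drift toward $l^\ast$, so the truncated Hardy sum can be bounded directly, or equivalently a Lyapunov function of the form $V(l)=\alpha^l$ with $\alpha=\alpha(\beta)>1$ yields a spectral gap estimate. The symmetric bound for the left-sum in the Hardy characterization is proven identically, which completes the proof.
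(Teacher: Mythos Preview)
Your Hardy--Miclo approach is in principle a valid route to the spectral-gap bound for this birth--death chain, but it is considerably heavier than what the paper actually does, and several steps you leave as sketches (uniform Gaussian comparison via Stirling across all $\rho\in[0,1]$, the boundary regime $\rho\to 1$, the Lyapunov argument with $V(l)=\alpha^l$) would each require nontrivial work to make rigorous and uniform in $N,K$.

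The paper bypasses all of this by exploiting a single structural feature of the chain: the drift $D(l)=r(l,l+1)-r(l,l-1)$ is \emph{uniformly concave} in $l$, with $D'(l)\le -C_\beta^N$ for an explicit constant $C_\beta^N$ bounded below by a positive $\beta$-dependent constant (namely $C_\beta^N=(|\Omega_N|-2\beta)/|\Omega_N|$ for $\beta\ge 1/4$ and $C_\beta^N=2\beta(2|\Omega_N|-1)/|\Omega_N|$ for $\beta\le 1/4$). Choosing the root $e_0$ to be the nearest integer to the zero $\bar e_0$ of $D$, this immediately gives $|D(l)|\ge \tfrac{1}{2}C_\beta^N\,|l-e_0|$ for all $l\neq e_0$, i.e.\ a linear restoring drift. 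The paper then invokes a general criterion (Proposition~\ref{prop:asym}, taken from \cite{KLO}) for reversible chains with a ramification satisfying the hypothesis $|\gamma(e_0,x)|\le C_0\big[r(x,p(x))-\sum_{y\in s(x)}r(x,y)\big]$; for a one-dimensional chain this hypothesis is exactly the linear-drift inequality above, and the proposition yields the Poincar\'e constant $2C_0=4/C_\beta^N\le C_\beta$ directly.

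The practical difference is that the drift-criterion proof is a two-line derivative computation that is automatically uniform in $N$ and $K$ and requires no regime splitting, no Stirling asymptotics, and no separate treatment of the edge $\rho\to 1$: the bound on $D'(l)$ holds pointwise on the whole state space. Your Hardy--Miclo route would eventually arrive at the same conclusion, but the uniformity you flag as the ``main technical obstacle'' is exactly what the paper's approach gets for free. If you want to complete your argument, the cleanest way is probably to abandon the Gaussian comparison altogether and instead use the log-concavity of $\tilde m_{N,K}$ together with the linear-drift bound $|D(l)|\gtrsim |l-l^\ast|$ directly in the Hardy sums; but at that point you are essentially reproducing the KLO criterion by hand.
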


The proof of this lemma is based on a general result concerning the spectral gap for strongly asymmetric reversible Markov processes presented below, see \cite{KLO}.
\begin{prop}\label{prop:asym}
Let $(X_t)$ be a Markov process with generator denoted by $L$ on a countable state space $E$ reversible with respect to a probability measure $m$, where $L$ acting on functions as $Lf(x)=\sum_{y \in E}r(x,y)(f(y)-f(x))$ for $x \in E$. Suppose that there exists a point $e_0 \in E$, a positive constant $C_0$ and a ramification $\{\gamma(e_0,x);x \in E\}$ satisfying the following assumption (H): $|\gamma(e_0,x)| \le C_0 \ [r(x,p(x))-\sum_{y \in s(x)}r(x,y)]$. Then, for every $f$ in $L^2(m)$, we have
\begin{displaymath}
 \langle (f- \langle f \rangle _m)^2 \rangle _m \le 2C_0  \langle -Lf,f \rangle _m 
\end{displaymath}
\end{prop}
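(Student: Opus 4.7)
The plan is to establish the Poincaré-type bound by a canonical-paths argument that leverages the ramification (tree) rooted at $e_0$ together with the strong asymmetry hypothesis (H). First, I would reduce the variance to an $L^2$ distance to a single reference point, using the elementary identity
\begin{displaymath}
\langle (f-\langle f\rangle_m)^2 \rangle_m \;=\; \inf_{c\in\R}\,\sum_{x\in E} m(x)(f(x)-c)^2 \;\le\; \sum_{x\in E} m(x)(f(x)-f(e_0))^2,
\end{displaymath}
so that it suffices to control the right-hand side by $2C_0\langle -Lf,f\rangle_m$.

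Next I would telescope $f(x)-f(e_0)$ along the unique path $\gamma(e_0,x)=(e_0=z_0,z_1,\dots,z_k=x)$ in the ramification and apply Cauchy--Schwarz:
\begin{displaymath}
(f(x)-f(e_0))^2 \;=\; \Big(\sum_{i=0}^{k-1}(f(z_{i+1})-f(z_i))\Big)^2 \;\le\; |\gamma(e_0,x)| \sum_{(u,v)\in \gamma(e_0,x)} (f(v)-f(u))^2.
\end{displaymath}
Substituting into the variance estimate and exchanging the order of summation reorganises the bound edge-by-edge: for each non-root $y\in E$ the edge $(p(y),y)$ carries a weight $\sum_{x\in T_y} m(x)|\gamma(e_0,x)|$, where $T_y$ denotes the subtree hanging from $y$, so that
\begin{displaymath}
\sum_{x} m(x)(f(x)-f(e_0))^2 \;\le\; \sum_{y\neq e_0}(f(y)-f(p(y)))^2 \sum_{x\in T_y} m(x)|\gamma(e_0,x)|.
\end{displaymath}

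The core of the argument is then to absorb the edge weight into the Dirichlet form by means of (H). The hypothesis expresses $|\gamma(e_0,x)|$ in terms of a net inward flux at $x$, and when one sums $m(x)|\gamma(e_0,x)|$ over $x\in T_y$ and uses reversibility $m(x)r(x,p(x))=m(p(x))r(p(x),x)$, the interior contributions telescope from the leaves of $T_y$ inward and only the boundary flux at the root of the subtree survives, giving a bound of the form $\sum_{x\in T_y} m(x)|\gamma(e_0,x)| \le C_0\, m(y)\,r(y,p(y))$. Inserted into the previous display, this yields $C_0 \sum_{y\neq e_0} m(y)\,r(y,p(y))\,(f(y)-f(p(y)))^2$, which is exactly $2C_0\langle -Lf,f\rangle_m$ once we recognise the symmetrised Dirichlet form $\tfrac12\sum_{x,y} m(x)r(x,y)(f(y)-f(x))^2$ (the tree being a spanning subgraph of the jump graph, every tree edge corresponds to a pair of reversible transitions contributing to the Dirichlet form).

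The main obstacle I anticipate is this telescoping step: verifying that (H) genuinely produces the cancellation inside every subtree $T_y$, and hence the clean constant $2C_0$, requires careful book-keeping, because it is precisely here that the "strongly asymmetric" drift hypothesis does the work of turning an a priori unbounded path length into a quantity comparable to the local inward rate. The remaining ingredients—the variance-to-reference reduction, Cauchy--Schwarz along the path, and reversibility—are routine.
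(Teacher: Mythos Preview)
The paper does not supply its own proof of this proposition; it is quoted from Kipnis--Landim--Olla \cite{KLO} and merely invoked to prove Lemma~\ref{lem:asym}. Your canonical-paths argument is the right one and is essentially how the result is established in that reference. In particular, the telescoping step you single out as the main obstacle works cleanly and exactly: using (H) and then reversibility $m(x)r(x,z)=m(z)r(z,p(z))$ for each child $z\in s(x)$,
\begin{displaymath}
\sum_{x\in T_y} m(x)\,|\gamma(e_0,x)|
\;\le\; C_0\sum_{x\in T_y}\Big[m(x)r(x,p(x))-\sum_{z\in s(x)}m(z)r(z,p(z))\Big]
\;=\; C_0\,m(y)\,r(y,p(y)),
\end{displaymath}
since as $x$ ranges over $T_y$ the children $z$ range over $T_y\setminus\{y\}$ and the two sums cancel except for the root term. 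So this is not a heuristic but an identity; there is no hidden difficulty.

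One small correction on constants: once you arrive at $C_0\sum_{y\neq e_0} m(y)\,r(y,p(y))\,(f(y)-f(p(y)))^2$, note that in the symmetrised Dirichlet form $\tfrac12\sum_{x,y}m(x)r(x,y)(f(y)-f(x))^2$ each tree edge $\{y,p(y)\}$ already appears with total weight $m(y)r(y,p(y))$ (both orientations together, by reversibility). Hence your bound is in fact $\le C_0\langle -Lf,f\rangle_m$, not merely $2C_0\langle -Lf,f\rangle_m$; the extra factor $2$ in the stated inequality is harmless slack.
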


\begin{proof}[Proof of Lemma \ref{lem:asym}]
For $K=2|\Om_N|-1$ and $2|\Om_N|$ the process has only one possible state. Therefore, we may assume that $0 \le K \le 2|\Om_N|-2$.

Denote by $\xi_t$ the Markov process on $\chi$ with generator $\mathcal{L}_{N,K}$. The proof consists in finding a state $e_0$ and a ramification $\{\gamma(e_0,x),x \in \chi_n\}$ satisfying the assumption $(H)$ required in Proposition \ref{prop:asym} for some strictly positive constant $C_0$.

The natural candidate as root of the ramification is the point where the drift  of the particle is 0. Define $D(x)$ as the mean drift of the particle at $x$: $D(x)=r(x,x+1)-r(x,x-1)$. Since $0 \le K \le 2|\Om_N|-2$, a simple computation shows that $D(0) \ge 0$ and $D([\frac{|\Om_N|-K}{2}]) \le 0$. Let $\bar{e_0}$ be the unique root of $D(x)$ in $[0,[\frac{|\Om_N|-K}{2}]]$ and define $e_0$ as the nearest integer of $\bar{e_0}$.

Once $e_0$ is defined, there is only one possible ramification of the state space. For $x \ge e_0$ we have to define the path from $e_0$ to $x$ as $\gamma(e_0,x)=(e_0, e_0+1,...,x)$. In the same way for $x \le e_0$ the path from $e_0$ to $x$ has to be $\gamma(e_0,x)=(e_0, e_0-1,...,x)$. With this ramification, for a fixed $x \ge e_0$ the parent of $x$ is $x-1$ and there is only one child $x+1$. In the same way for a fixed $x \le e_0$ the parent of $x$ is $x+1$ and there is only one child $x-1$. Therefore, in order to prove that this ramification satisfies assumption (H) of the Proposition \ref{prop:asym}, we have to show that $r(x,x+1)-r(x,x-1) \ge  C_0 (e_0-x)$ for $x < e_0$ and $r(x,x-1)-r(x,x+1) \ge  C_0 (x-e_0)$ for $x > e_0$.

A simple computation shows that $D^{\prime}(x) \le -C_{\b}^N$ where $C_{\b}^N=\frac{|\Om_N|-2\b}{|\Om_N|}$ for $\b \ge \frac{1}{4}$ and $C_{\b}^N=2\b\frac{2|\Om_N|-1}{|\Om_N|}$ for $\b \le \frac{1}{4}$. Therefore, for $x > e_0$ and large N,
\begin{align*}
r(x,x-1)-r(x,x+1) = -D(x) &\ge C_{\b}^N(x-\bar{e_0}) \\
			&\ge C_{\b}^N(x-e_0)-C_{\b}^N|e_0-\bar{e_0}| \\
			&\ge \frac{C_{\b}^N}{2}(x-e_0).
\end{align*}
The last inequality follows from the definition of $e_0$ since $|e_0-\bar{e_0}| \le \frac{1}{2}$ and from the fact that $x \ge e_0 +1$. In the same way we can prove that for $x < e_0$,
\begin{displaymath}
r(x,x+1)-r(x,x-1) \ge \frac{C_{\b}^N}{2}(e_0-x).
\end{displaymath}
Since there exists some finite constant $C_{\b}$ such that $\frac{2}{C_{\b}^N} \le C_{\b}$ for all $N$, we conclude the proof of the Lemma \ref{lem:asym}.
\end{proof}

\begin{proof}[Proof of Proposition \ref{prop:multispectral}]
The idea of the proof consists in using the means of the mathematical induction with respect to $r$.
Assume $r=1$. Then, the process is the uniform symmetric simple exclusion process for which Quastel proved a spectral gap in [8].

Next, consider the general positive integer $r$. First of all, we suppose without loss of generality that $K_0 \le K_i$ for $0 \le i \le r$ where $K_0:=N-\sum_{i=1}^rK_i$. This implies that $K_0 \le \frac{N}{r+1}$.
For $0 \le i \le r$, define $\pi_0:\Si^r_N \to \{0,1\}^N$ as a function on the configuration space which do not distinguish sites occupied by some particle:
\begin{displaymath}
\pi_0(\eta)=\xi \in \{0,1\}^N \ \text{where} \ \xi(j)=
\left\{
\begin{aligned}
&1&  \eta(j)=0 \\
&0&  \text{otherwise.}
\end{aligned}
\right.
\end{displaymath}
For a function $f$ in $L^2(m_{N,K_1,...K_r})$ satisfying $ \langle f \rangle_{N,K_1,...K_r}=0$, define $f_0$ as the conditional expectation of $f$ with respect to the $\si$-field generated by $\pi_0$. Then, similar to the proof of Theorem \ref{thm:spectraluni}, we project $f$ on this $\si$-field and on its orthogonal:
\begin{equation}
 \langle f^2 \rangle _{m_{N,K_1,...K_r}}  =  \langle  (f-f_0)^2  \rangle _{m_{N,K_1,...K_r}} +  \langle f_0^2 \rangle _{m_{N,K_1,...K_r}}. \label{eq:twoterm2}
\end{equation}
We first consider the second term. The arguments are similar to the ones used in the second step of the proof of Theorem \ref{thm:spectraluni}. 

We may think $f_0$ as a function defined on $\{0,1\}^N$. The generator $L^r_N$ acting on $\{0,1\}^N$ is the generator of the usual uniform symmetric exclusion process for which Quastel [8] proved a spectral gap. Therefore, we have 
\begin{displaymath}
\langle f_0^2 \rangle _{m_{N,K_1,...K_r}} \le \langle L^r_N f_0, f_0 \rangle _{m_{N,K_1,...K_r}} \le \langle L^r_N f, f \rangle _{m_{N,K_1,...K_r}}. 
\end{displaymath}
We now turn to the first term of (\ref{eq:twoterm2}). For a subset $B \subset \La_N:=\{1,...,N\}$ such that $\#B=K_0$, define $f_{0,B}$ as a function on $\Si_{\La_N-B,K_1,K_2,...,K_{r-1}}^{r-1}$ as follows:
\begin{displaymath}
f_{0,B}(\xi)=f(\xi^B)-f_0(\xi^B) \quad \xi \in \Si_{\La_N-B,K_1,K_2,...,K_{r-1}}^{r-1}.
\end{displaymath}
In this formula, $\xi^B$ stands for the configuration $\eta \in \Si_{N,K_1,K_2,...,K_r}^r$ such that 
\begin{displaymath}
\eta(i)= \begin{cases}
0 & \text{if} \quad i \in B \\
j+1 & \text{if} \quad \xi(i)=j.
\end{cases}
\end{displaymath}
With this notation, we rewrite the first term of (\ref{eq:twoterm2}) as follows:
\begin{align*}
\langle (f-f_0)^2 & \rangle _{m_{N,K_1,...K_r}} \\
& = \sum_{B \subset \La_N}m_{N,K_1,...K_r}(\{\eta; \eta(i)=0 \ \text{for all} \ i \in B\}) \langle  f_{0,B}^2  \rangle _{m_{N-K_0,K_1,...K_{r-1}}}.
\end{align*}
The generator $L^r_N$ acting on $\Si_{\La_N-B,K_1,K_2,...,K_{r-1}}^{r-1}$ is the operator $\frac{N-K_0}{N} L^{r-1}_{N-K_0}$. By the induction assumption, there exists some constant $C_{r-1}$ such that
\begin{align*}
&\langle f_{0,B} \rangle _{m_{N-K_0,K_1,...K_{r-1}}} \le C_{r-1} \langle L^{r-1}_{N-K_0} f_{0,B}, f_{0,B} \rangle _{m_{N-K_0,K_1,...K_{r-1}}} \\
& \le  C_{r-1} \frac{N}{N-K_0} \langle L^r_N f, f \rangle _{m_{N,K_1,...K_r}}.
\end{align*}
By the assumption $K_0 \le \frac{N}{r+1}$, therefore we have $ \frac{N}{N-K_0} \le \frac{r+1}{r}$ and
\begin{displaymath}
\langle f_{0,B} \rangle _{m_{N-K_0,K_1,...K_{r-1}}} \le C_r \langle L^r_N f, f \rangle _{m_{N,K_1,...K_r}} 
\end{displaymath}
for some constant $C_r$.
\end{proof}
\subsection{Closed Forms}	
In this section, we introduce the notion of closed forms associated to our model and those of the generalized exclusion process. We have one-to-one correspondence between them, so algebraic characterization of the closed forms can be reduced to that for the generalized exclusion process.

Let $\mathcal{H}_{x, x+e_i}$ be a subspace of $\chi=\{-1,0,1\}^{\Z^d}$ such that
\begin{displaymath}
\mathcal{H}_{x, x+e_i}:=\{\eta \ ;\Psi^{x,x+e_i}_{1,0}(\eta)+\Psi^{x,x+e_i}_{0,-1}(\eta)+\Psi^{x,x+e_i}_{-1,1}(\eta)+\Psi^{x,x+e_i}_{1,-1}(\eta)+\Psi^{x,x+e_i}_{0,0}(\eta)=1 \}.
\end{displaymath}

For two configurations $\eta$ and $\xi \in \chi$, define $D(\eta,\xi)$ as follows: $D(\eta,\xi)=1$ if there exists a unique point $x \in \Z^d$ and a unique direction $i$ such that $\eta(z)=\xi(z)$ for all $z \neq x, x+e_i$ and $\eta(x)+\eta(x+e_i)=\xi(x)+\xi(x+e_i)$ and $\eta \neq \xi$, and $D(\eta,\xi)=0$ otherwise.  A path $\Gamma(\eta,\xi)=(\eta=\eta^0, \eta^1,...,\eta^{m-1},\eta^m=\xi)$ from $\eta$ to $\xi$ is a sequence of configurations $\eta^j$ such that every two consecutive configurations satisfies $D(\eta^j,\eta^{j+1})=1$. 

Consider a family of continuous functions $\mathfrak{u}=(\mathfrak{u}_x^i)_{1 \le i \le d, x \in \Z^d}$ where $\mathfrak{u}_x^i: \mathcal{H}_{x,x+e_i} \to \R$. For an ordered pair $(\eta, \xi)$ satisfying $D(\eta,\xi)=1$, define a path integral $I^1_{(\eta,\xi)}$ of $\mathfrak{u}$ by 
\begin{align*}
I^1_{(\eta,\xi)}(\mathfrak{u}) &:= 
	\frac{1}{\sqrt{C_+}} \big( \mathfrak{u}_x^i(\eta) \Psi^{x,x+e_i}_{1,0}(\eta)- \mathfrak{u}_x^i(\xi) \Psi^{x,x+e_i}_{1,0}(\xi) \big) \\
	&+\frac{1}{\sqrt{C_-}} \big( \mathfrak{u}_x^i(\eta) \Psi^{x,x+e_i}_{0,-1}(\eta)- \mathfrak{u}_x^i(\xi) \Psi^{x,x+e_i}_{-1,0}(\xi) \big) \\
	&+ \frac{1}{\sqrt{C_E}} \big( \mathfrak{u}_x^i(\eta) \Psi^{x,x+e_i}_{-1,1}(\eta) \Psi^{x,x+e_i}_{1,-1}(\xi) - \mathfrak{u}_x^i(\xi) \Psi^{x,x+e_i}_{1,-1}(\eta) \Psi^{x,x+e_i}_{-1,1}(\xi) \big) \\
	&+ \frac{1}{\sqrt{C_A}} \big( \mathfrak{u}_x^i(\eta) \Psi^{x,x+e_i}_{1,-1}(\eta) \Psi^{x,x+e_i}_{0,0}(\xi) - \mathfrak{u}_x^i(\xi) \Psi^{x,x+e_i}_{0,0}(\eta) \Psi^{x,x+e_i}_{1,-1}(\xi) \big) \\
	&+ \frac{1}{\sqrt{C_C}} \big( \mathfrak{u}_x^i(\eta) \Psi^{x,x+e_i}_{0,0}(\eta) \Psi^{x,x+e_i}_{-1,1}(\xi) - \mathfrak{u}_x^i(\xi) \Psi^{x,x+e_i}_{-1,1}(\eta) \Psi^{x,x+e_i}_{0,0}(\xi) \big) \\
\end{align*}
A path integral can be naturally extended to paths of any length as
\begin{displaymath}
I^1_{\Gamma(\eta,\xi)}(\mathfrak{u}):=\sum_{j=0}^{m-1} I^1_{(\eta^j,\eta^{j+1})}(\mathfrak{u}).
\end{displaymath}

A family of continuous functions $(\mathfrak{u}_x^i)_{1 \le i \le d, x \in \Z^d}$ is called an $I^1$-closed form if for all closed path $\Gamma(\eta,\xi)$, $I^1_{\Gamma(\eta,\xi)}(\mathfrak{u})=0$ where a path $\Gamma(\eta,\xi)$ is called closed if $\eta=\xi$.

Next, let us recall the path integral and the closed form of the generalized exclusion processes. Let $\widetilde{\mathcal{H}_{x, x+e_i}}$ be a subspace of $\chi=\{-1,0,1\}^{\Z^d}$ such that
\begin{displaymath}
\widetilde{\mathcal{H}_{x, x+e_i}}:=\{\eta \ ;\Psi^{x,x+e_i}_{1,0}(\eta)+\Psi^{x,x+e_i}_{0,-1}(\eta)+\Psi^{x,x+e_i}_{1,-1}(\eta)+\Psi^{x,x+e_i}_{0,0}(\eta)=1 \}.
\end{displaymath}

For two configurations $\eta$ and $\xi \in \chi$, define $\tilde{D}(\eta,\xi)$ as follows: $\tilde{D}(\eta,\xi)=1$ if there exists a unique point $x \in \Z^d$ and a unique direction $i$ such that $\eta(z)=\xi(z)$ for all $z \neq x, x+e_i$ and $\eta(x)-1=\xi(x)$ and $\eta(x+e_i)+1=\xi(x+e_i)$ or $\eta(x)+1=\xi(x)$ and $\eta(x+e_i)-1=\xi(x+e_i)$, and $\tilde{D}(\eta,\xi)=0$ otherwise. A path $\tilde{\Gamma}(\eta,\xi)=(\eta=\eta^0, \eta^1,...,\eta^{m-1},\eta^m=\xi)$ from $\eta$ to $\xi$ is a sequence of configurations $\eta^j$ such that every two consecutive configurations satisfies $\tilde{D}(\eta^j,\eta^{j+1})=1$.  

Consider a family of continuous functions $(\tilde{\mathfrak{u}_x^i})_{1 \le i \le d, x \in \Z^d}$ where $\tilde{\mathfrak{u}_x^i}: \widetilde{\mathcal{H}_{x,x+e_i}} \to \R$. For an ordered pair $(\eta, \xi)$ satisfying $\tilde{D}(\eta,\xi)=1$, define a path integral $I^2_{(\eta,\xi)}$ by  
\begin{displaymath}
 I^2_{(\eta,\xi)}(\tilde{\mathfrak{u}}) := \begin{cases}
	\tilde{\mathfrak{u}_x^i}(\eta)  & \text{if $\eta(x)-1=\xi(x)$ and $\eta(x+e_i)+1=\xi(x+e_i)$} \\	
	-\tilde{\mathfrak{u}_x^i}(\xi)  & \text{if $\eta(x)+1=\xi(x)$ and $\eta(x+e_i)-1=\xi(x+e_i)$}. \\	
\end{cases}
\end{displaymath}
A path integral can be naturally extended to paths of any length as
\begin{displaymath}
I^2_{\tilde{\Gamma}(\eta,\xi)}(\tilde{\mathfrak{u}}):=\sum_{j=0}^{m-1} I^2_{(\eta^j,\eta^{j+1})}(\tilde{\mathfrak{u}}).
\end{displaymath}
A family of continuous functions $(\tilde{\mathfrak{u}_x^i})_{1 \le i \le d, x \in \Z^d}$ is called an $I^2$-closed form if $I^2_{\tilde{\Gamma}(\eta,\xi)}(\tilde{\mathfrak{u}})=0$ hold for all closed paths $\tilde{\Gamma}(\eta,\xi)$.

Now, we construct one-to-one map from the set of $I^1$-closed forms to the set of $I^2$-closed forms. For an $I^1$-closed form $(\mathfrak{u}_x^i)_{1 \le i \le d, x \in \Z^d}$, define a family of continuous functions $(\tilde{\mathfrak{u}_x^i})_{1 \le i \le d, x \in \Z^d}$ as
\begin{align*}
 \tilde{\mathfrak{u}_x^i}(\eta)&:=
	\frac{1}{\sqrt{C_+}}\mathfrak{u}_x^i(\eta) \Psi^{x,x+e_i}_{1,0}(\eta) 		+\frac{1}{\sqrt{C_-}}\mathfrak{u}_x^i(\eta) \Psi^{x,x+e_i}_{0,-1}(\eta) \\
	&+\frac{1}{\sqrt{C_A}}\mathfrak{u}_x^i(\eta) \Psi^{x,x+e_i}_{1,-1}(\eta)
	+\frac{1}{\sqrt{C_C}}\mathfrak{u}_x^i(\eta) \Psi^{x,x+e_i}_{0,0}(\eta),
\end{align*}
then, $(\tilde{\mathfrak{u}_x^i})_{1 \le i \le d, x \in \Z^d}$ is an $I^2$-closed form. On the other hand, for an $I^2$-closed form $(\tilde{\mathfrak{v}_x^i})_{1 \le i \le d, x \in \Z^d}$, define a family of continuous functions $(\mathfrak{v}_x^i)_{1 \le i \le d, x \in \Z^d}$ as
\begin{align*}
\mathfrak{v}_x^i(\eta) &:=
	\sqrt{C_+}\tilde{\mathfrak{v}_x^i}(\eta)  \Psi^{x,x+e_i}_{1,0}(\eta)
	+\sqrt{C_-}\tilde{\mathfrak{v}_x^i}(\eta) \Psi^{x,x+e_i}_{0,-1}(\eta) \\	&+\sqrt{C_A}\tilde{\mathfrak{v}_x^i}(\eta) \Psi^{x,x+e_i}_{1,-1}(\eta) 
	+\sqrt{C_C}\tilde{\mathfrak{v}_x^i}(\eta) \Psi^{x,x+e_i}_{0,0}(\eta) \\
	&- \sqrt{C_E}\Big[(\tilde{\mathfrak{v}_x^i}(\eta^{x=1,x+e_i=-1})+\tilde{\mathfrak{v}_x^i}(\eta^{x=0,x+e_i=0})) \Big] \Psi^{x,x+e_i}_{-1,1}(\eta), 
\end{align*}
then, $(\mathfrak{v}_x^i)_{1 \le i \le d, x \in \Z^d}$ is an $I^1$-closed form.

Let us introduce the notion of an $I^1$-germ of closed form and an $I^2$-germ of closed form. A family of continuous functions $(\mathfrak{g}_i)_{1 \le i \le d}$ (resp. $\tilde{\mathfrak{g}_i}$) where $\mathfrak{g}_i:\mathcal{H}_{0,e_i} \to \R$ is an $I^1$(resp. $I^2$)-germ of closed form if $\mathfrak{u}_x^i:=\tau_x \mathfrak{g}_i$ is an $I^1$(resp. $I^2$)-closed form.

Main theorem of this section is formulated as follows:
\begin{thm}\label{thm:closedform}
For every $I^1$-germ of closed form $\mathfrak{g}$, there exists a sequence of $L^2(\nu_{\rho})$-functions $h_n$ and constants $(c_i)_{1 \le i \le d}$ such that
\begin{displaymath}
\mathfrak{g}_i = \lim_{n \to \infty}(c_i \sum_{j=1}^d (\mathcal{U}^j)_i + \nabla_{0,e_i}\Gamma_{h_n}) \quad \text{in} \quad L^2(\nu_{\rho}) \quad \text{for all} \quad 1 \le i \le d.
\end{displaymath}
\end{thm}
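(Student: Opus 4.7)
The plan is to reduce the statement to the known characterization of germs of closed forms for the generalized exclusion process with $\kappa = 2$, exploiting the explicit one-to-one correspondence between $I^1$-germs and $I^2$-germs of closed forms constructed immediately before the theorem. The characterization on the generalized-exclusion side is available in the literature (e.g.\ Funaki--Uchiyama, or the arguments in Chapter 7 of \cite{KL} adapted to $\kappa=2$).

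First, given an arbitrary $I^1$-germ of closed form $\mathfrak{g}=(\mathfrak{g}_i)_{1\le i\le d}$, I pass to the associated $I^2$-germ $\tilde{\mathfrak{g}}$ via the explicit map
\[
\tilde{\mathfrak{g}}_i(\eta) = \frac{\mathfrak{g}_i(\eta)\Psi^{0,e_i}_{1,0}}{\sqrt{C_+}} + \frac{\mathfrak{g}_i(\eta)\Psi^{0,e_i}_{0,-1}}{\sqrt{C_-}} + \frac{\mathfrak{g}_i(\eta)\Psi^{0,e_i}_{1,-1}}{\sqrt{C_A}} + \frac{\mathfrak{g}_i(\eta)\Psi^{0,e_i}_{0,0}}{\sqrt{C_C}},
\]
so that $\tilde{\mathfrak{g}}$ is a germ of $I^2$-closed form by the bijection argument just given. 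Second, I apply the characterization for the $\kappa=2$ generalized exclusion process to obtain constants $(c_i)_{1\le i \le d}$ and cylinder functions $\tilde h_n$ such that
\[
\tilde{\mathfrak{g}}_i = \lim_{n\to\infty}\bigl( c_i(\eta(e_i)-\eta(0)) + \nabla_{0,e_i}\Gamma_{\tilde h_n} \bigr) \quad \text{in } L^2(\nu_\rho)
\]
for each $i$. Third, I transport this limit back through the inverse map recalled in the excerpt. Since the inverse map is bounded in $L^2(\nu_\rho)$ (all weights $\sqrt{C_\pm},\sqrt{C_A},\sqrt{C_C},\sqrt{C_E}$ are constants and all indicators are bounded by $1$), and since $\sum_{j=1}^d(\mathcal{U}^j)_i(\eta)=\nabla_{0,e_i}\eta(0)=\eta(e_i)-\eta(0)$ so that the pre-image of the drift piece $c_i(\eta(e_i)-\eta(0))$ is exactly $c_i\sum_{j=1}^d(\mathcal{U}^j)_i$, the image of the approximating sequence has the required form $c_i\sum_{j=1}^d(\mathcal{U}^j)_i + \nabla_{0,e_i}\Gamma_{h_n}$ for an appropriate cylinder function $h_n$ built from $\tilde h_n$.

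The main obstacle I anticipate is the bookkeeping in this third step. The inverse map contains the non-diagonal correction $-\sqrt{C_E}\bigl[\tilde{\mathfrak{v}}_x^i(\eta^{x=1,x+e_i=-1})+\tilde{\mathfrak{v}}_x^i(\eta^{x=0,x+e_i=0})\bigr]\Psi^{x,x+e_i}_{-1,1}$, which is not, at face value, of gradient type. The crux is to show that when $\tilde{\mathfrak{v}}_x^i = \tau_x \tilde\nabla_{0,e_i}\Gamma_{\tilde h_n}$, this corrective term combines with the other four summands to give precisely $\tau_x\nabla_{0,e_i}\Gamma_{h_n}$ in $L^2(\nu_\rho)$, where $h_n$ is produced explicitly from $\tilde h_n$ by grouping the exchange and creation/annihilation pieces. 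This uses the detailed balance identity \eqref{eq:dbc} and the algebraic decomposition of $\pi_b$. Once this identification is verified, convergence in $L^2(\nu_\rho)$ transfers by boundedness of the inverse map and the conclusion follows.
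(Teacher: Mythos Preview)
Your approach coincides with the paper's: reduce via the bijection to the $I^2$-characterization for $\kappa=2$ generalized exclusion, invoke the argument of \cite{KL} (which needs only the spectral gap and translation invariance of $\nu_\rho$), and transport back by $L^2$-boundedness of the inverse map---what the paper summarizes as ``Cauchy--Schwarz.'' The obstacle you anticipate in step three in fact dissolves on direct computation: the inverse map sends $\tilde{\nabla}_{0,e_i}\Gamma_h$ \emph{exactly} to $\nabla_{0,e_i}\Gamma_h$ with the same $h$ and sends $\tilde{\mathcal{U}}_i$ to $-(\mathcal{U}^i)_i$, so no detailed-balance manipulation or rebuilding $h_n$ from $\tilde h_n$ is required; one slip to fix is that in this paper $\nabla_{0,e_i}$ carries the weight $\sqrt{c_{0,e_i}}$, hence $\nabla_{0,e_i}\eta(0)\neq \eta(e_i)-\eta(0)$, and the drift in the $I^2$-statement is the indicator $\tilde{\mathcal{U}}_i$ rather than $\eta(e_i)-\eta(0)$.
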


By the one-to-one correspondence between the $I^1$-germ of closed form and the $I^2$-germ of closed form and Cauchy-Schwartz inequality, in order to prove Theorem \ref{thm:closedform} we have only to prove the next theorem:
\begin{thm}
For every $I^2$-germ of closed form $\tilde{\mathfrak{g}}$, there exists a sequence of $L^2(\nu_{\rho})$-functions $h_n$ and constants $(c_i)_{1 \le i \le d}$ such that
\begin{displaymath}
\tilde{\mathfrak{g}}_i = \lim_{n \to \infty}(c_i \sum_{j=1}^d (\tilde{\mathcal{U}^j})_i + \tilde{\nabla}_{0,e_i}\Gamma_{h_n}) \quad \text{in} \quad L^2(\nu_{\rho}) \quad \text{for all} \quad 1 \le i \le d.
\end{displaymath}
where $\tilde{\mathcal{U}}_i(\eta) = 1_{\{\eta(0) \ge 0, \eta(e_i) \le 0\}}$ and\
\begin{align*}
\tilde{\nabla}_{0,e_i}& f(\eta) =\Psi^{0,e_i}_{1,0}(\eta)(f(\eta^{0,e_i})-f(\eta)) + \Psi^{0,e_i}_{0,-1}(\eta)(f(\eta^{0,e_i})-f(\eta)) \\
	&+ \Psi^{0,e_i}_{1,-1}(\eta)(f(\eta^{0=0,e_i=0})-f(\eta)) + \Psi^{0,e_i}_{0,0}(\eta)(f(\eta^{0=-1,e_i=1})-f(\eta)).
\end{align*}
\end{thm}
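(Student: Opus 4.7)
The plan is to work first in finite volume, using the spectral gap of Theorem \ref{thm:spectralgap}, and then pass to the infinite-volume limit. For each positive integer $l$, the $I^2$-germ $\tilde{\mathfrak{g}}$ induces a closed form on $\La_l$ in the sense that its path integral $I^2_{\tilde{\Ga}}$ vanishes along every loop in the hyperplane $\Si_{\La_l,K}:=\{\sum_{x\in\La_l}\eta(x)=K\}$. I would work inside the Hilbert space $\mathcal{H}_l$ of bond-indexed families $(u_{x,i})_{x,\,x+e_i\in\La_l,\,1\le i\le d}$ with $u_{x,i}\in L^2(\nu_{\La_l,K})$, and distinguish therein the subspace $\mathcal{E}_l$ of exact forms $\{\tilde{\nabla}_{x,x+e_i}h\colon h\in L^2(\nu_{\La_l,K})\}$ and the subspace $\mathcal{Z}_l$ of closed forms.

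First I would verify by direct computation that $\mathcal{E}_l\subset\mathcal{Z}_l$, since coboundaries integrate to zero along any loop. Second, I would establish that the quotient $\mathcal{Z}_l/\overline{\mathcal{E}_l}$ is exactly $d$-dimensional and spanned by the germs of the canonical currents $\{\tilde{\mathcal{U}}_i\}_{i=1}^d$. The idea is to take an orthogonal decomposition of $\mathcal{Z}_l$ and observe that the orthogonal complement of $\overline{\mathcal{E}_l}$ inside $\mathcal{Z}_l$ consists of bond-indexed families that are harmonic with respect to the associated discrete Laplacian $\tilde{L}_{\La_l}$; together with the closedness constraint, this kernel has at most $d$ dimensions, one per coordinate direction coming from the single conserved charge, and the $d$ germs $\tilde{\mathcal{U}}_i$ span it since they are closed, translation invariant, and linearly independent modulo exact forms.

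Third, given the germ $\tilde{\mathfrak{g}}$, I would extract the constants $c_i$ by the finite-volume orthogonal projection onto $\mathrm{span}\{\tilde{\mathcal{U}}_i\}$; translation invariance of $\tilde{\mathfrak{g}}$ together with the translation invariance of $\nu_\rho$ makes these constants independent of the scale $l$ and of $K$ in the limit $K/|\La_l|\to\rho$. The remainder $\tilde{\mathfrak{g}}_i-c_i\tilde{\mathcal{U}}_i$ then lies in the $L^2(\nu_\rho)$-closure of exact germs, and a diagonal extraction of local approximants produces a sequence $h_n$ of cylinder functions with $\tilde{\nabla}_{0,e_i}\Gamma_{h_n}\to \tilde{\mathfrak{g}}_i-c_i\tilde{\mathcal{U}}_i$ in $L^2(\nu_\rho)$ for every $i$, which is the claimed decomposition.

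The main obstacle is the dimension count in the second step. The lower bound is easy: the $\tilde{\mathcal{U}}_i$ are closed, translation invariant, and manifestly nonexact. The harder direction is the upper bound, excluding the existence of any additional cohomology class. Here Theorem \ref{thm:spectralgap} is essential: it controls the convergence of the finite-volume Dirichlet forms and thereby guarantees that the projection onto $\overline{\mathcal{E}_l}^{\perp}\cap\mathcal{Z}_l$ remains finite-dimensional uniformly in $l$, with dimension matched by the number of conservation laws. This is the analog, in our two-species setting, of the closed-form characterization used for the generalized exclusion process with $\k=2$, and via the $I^1$--$I^2$ correspondence constructed just before the statement, the argument transfers directly to the original problem with only cosmetic changes.
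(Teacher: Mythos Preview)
Your plan aims at the same destination as the paper, which simply invokes the argument of \cite{KL} after noting that the two inputs it needs---the spectral gap of Theorem~\ref{thm:spectralgap} and the translation invariance of $\nu_\rho$---are available. But the finite-volume picture you describe is wrong, and the error propagates through the rest of the sketch. On each hyperplane $\Sigma_{\La_l,K}$ the configuration graph is finite and connected, so every closed $1$-form on it is exact: $\mathcal{Z}_l=\mathcal{E}_l$, and the quotient you want to be $d$-dimensional is in fact trivial. In particular the $\tilde{\mathcal{U}}^j$ are themselves exact on $\La_l$, and there is no finite-volume orthogonal projection onto a $d$-dimensional complement from which to read off the constants $c_i$. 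The $d$ extra cohomology classes emerge only in the infinite-volume limit.

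Correspondingly, the role you assign to the spectral gap is not the one it actually plays. In the \cite{KL} scheme one uses $\mathcal{Z}_l=\mathcal{E}_l$ to write the restricted germ as $\tilde{\nabla}\psi_l$ for some function $\psi_l$ on $\Sigma_{\La_l,K}$, and Theorem~\ref{thm:spectralgap} is then applied \emph{quantitatively}, bounding the variance of (averages of) $\psi_l$ by $Cl^2$ times a Dirichlet form built from the germ. It is this growth estimate---not any dimension count---that allows one to extract the constants $c_i$ as limits of suitably normalized expectations of $\psi_l$, and that guarantees the recentred potentials remain bounded in $L^2$ per site so that a tightness/compactness argument produces the cylinder approximants $h_n$. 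You have assembled the correct list of ingredients (finite-volume exactness, spectral gap, translation invariance, passage to the limit), but the mechanism by which they combine is not the one you describe; as written, your second and third steps would not go through.
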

Applying the method in \cite{KL}, we can deduce this theorem since the proof depends only on the spectral gap estimates, which is proved by Theorem 1.8.1, and the fact that $\nu_{\rho}$ is translation invariant. 

\section{Proof of Theorem \ref{thm:main2}}
The strategy of the proof is the same as given for liquid-solid system in \cite{F}. The main step is to establish the local ergodic theorem.

First, we consider a class of martingales associated with the empirical measure as similar to Case 1. We also use the same notations: $T$, $M^H(t)$, $\pi^N_t$, $Q_{\mu^N}$ and $Q^*$ which are defined in Section 3. In Case 2, we assume the gradient condition, $C_+ + C_- - C_A -2C_E=0$, so the martingale $M^H(t)$ is rewritten as
\begin{displaymath}
M^{H}(t) = \langle \pi^N_t, H \rangle  -  \langle \pi^N_0, H \rangle  
	 - \int^t_0 N^{-d}\sum_{x \in \T^d_N}(\Delta^NH)(\frac{x}{N})\tau_x h(\eta_s)ds
\end{displaymath}
where $h$ is a cylinder function defined by
\begin{align*}
h(\eta) &= C_+ 1_{\{\eta(0)=1\}} - C_- 1_{\{\eta(0)=-1\}}
\end{align*}
and $\Delta^NH$ represents the discrete Laplacian of $H$:
\begin{displaymath}
(\Delta^NH)(\frac{x}{N})=\sum_{i=1}^d N^2[H(\frac{x+e_i}{N})+H(\frac{x-e_i}{N})-2H(\frac{x}{N})].
\end{displaymath}
Following the same argument in Section 3, it is easy to prove that 
\begin{displaymath}
\lim_{N \to \infty}\mathbb{P}_{\mu^N}[\sup_{0 \le t \le T}|M^H(t)| \ge \de]=0
\end{displaymath}
for every $\de>0$, the sequence $\{Q_{\mu^N}, N \ge 1 \}$ is weakly relatively compact and that every limit points $Q^{*}$ is concentrated on absolutely continuous paths $\pi_t(du)=\pi(t,u)du$ with density bounded by 1 and -1 from above and below respectively: $-1 \le \pi(t,u) \le 1$.

From Theorem \ref{thm:uniqueness} stated below, there exists at most one weak solution of (\ref{eq:hydro2}). Therefore, to conclude the proof of the theorem, it remains to show that all limit points of the sequence $\{Q_{\mu^N}, N \ge 1\}$ are concentrated on absolutely continuous trajectories $\pi(t,du)=\pi(t,u)du$ whose densities are weak solutions of the equation (\ref{eq:hydro2}). For this purpose, all we have to show is that
\begin{displaymath}
\lim_{N \to \infty}\mathbb{E}_{\mu^N}[\int^t_0 N^{-d}\sum_{x \in \T^d_N}(\Delta^NH)(\frac{x}{N})\tau_x h(\eta_s)ds]=E_{Q^*}[\int^t_0 \int_{\T^d}(\Delta H)(u)P(\pi(s,u))ds]
\end{displaymath}
where $P$ is the function defined in (\ref{eq:diffusion2}).

First, we establish the local ergodic theorem which enables us to replace the sample mean of microscopic variables with their average under the equilibrium measure having a microscopically defined sample density as its density-parameter. Let $\mu^N_t \in \mathcal{P}(\chi^d_N)$ be the probability distribution of $\eta_t$ on $\chi^d_N$ and let $\tilde{\mu^N}$ be the space-time average of $\{\mu^N_t\}_{0 \le t \le T}$ defined by
\begin{displaymath}
\tilde{\mu^N}=\frac{1}{TN^d}\sum_{x \in \T^d_N} \int^T_0 \mu^N_t \cdot \tau_x^{-1} dt.
\end{displaymath}
Then, we have that
\begin{prop}[local ergodic theorem]
For every cylinder function $f$,
\begin{displaymath}
\lim_{K \to \infty}\limsup_{N \to \infty}E^{\tilde{\mu^N}}[|\bar{f}_{0,K}(\eta)-\langle f \rangle_{\eta^K(0)}|]=0
\end{displaymath}
where $\bar{f}_{0,K}=\frac{1}{(2K+1)^d}\sum_{|x| \le K}\tau_xf(\eta)$.
\end{prop}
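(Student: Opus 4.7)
The plan is to follow the classical one-block estimate of Guo, Papanicolaou and Varadhan (cf.\ Chapter 5 of \cite{KL}), adapted to the present gradient setting. The overall strategy is to extract a translation-invariant weak limit $Q^\infty$ of the space-time averaged measure $\tilde\mu^N$ and to identify it as a mixture of grand canonical measures $\nu_\rho$; once that identification is in hand, the $L^1$ difference appearing in the proposition reduces to a law of large numbers for canonical expectations, which is routine.

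The first step is to obtain the a priori entropy and Dirichlet form bounds. Since $\nu_\rho$ is not fully supported on $\chi^d$ when $C_C=0$, I would take as reference a product measure $\nu^{*}$ that assigns strictly positive probability to each of $-1,0,1$ at every site. One has $H(\mu^N \mid \nu^{*})\le C_0 N^d$ for the initial measure (by the concentration hypothesis on $\mu^N$), and by monotonicity of relative entropy along the Markov semigroup this persists uniformly in $t\in[0,T]$. The usual computation of $\tfrac{d}{dt}H(\mu^N_t \mid \nu^{*})$ combined with the $N^2$ time-acceleration yields $\int_0^T \sum_{b\in(\T^d_N)^*} D_b(\mu^N_t,\nu^{*})\,dt \le C_0 N^{d-2}/2$. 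Convexity of $D_b$ and averaging over space and time then give $D_b(\tilde\mu^N,\nu^{*})=O(1/(TN^2))$ for every fixed bond $b$.

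Next, by weak compactness of probability measures on $\chi^d=\{-1,0,1\}^{\Z^d}$, extract a subsequence $\tilde\mu^{N_k}$ converging weakly to some $Q^\infty$. The limit is translation-invariant with bounded entropy density per site, and by lower semicontinuity of the Dirichlet form one has $D_b(Q^\infty,\nu^{*})=0$ for every bond $b$. Consequently $Q^\infty$ is reversible for each local jump operator. A standard argument combining the ergodic decomposition of translation-invariant measures with irreducibility of the infinite-volume dynamics at each density level identifies the ergodic components with the grand canonical family $\{\nu_\rho\}_{\rho\in[-1,1]}$, so $Q^\infty = \int_{[-1,1]} \nu_\rho \, d\lambda(\rho)$ for some probability measure $\lambda$. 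Under this decomposition the multidimensional ergodic theorem gives, $Q^\infty$-a.s., $\eta^K(0)\to\rho$ and $\bar f_{0,K}(\eta)\to\langle f\rangle_\rho$ as $K\to\infty$. Continuity of $\rho\mapsto\langle f\rangle_\rho$ on $[-1,1]$ (immediate from the explicit form of $\nu_\rho$ in Case~2) gives $\langle f\rangle_{\eta^K(0)}\to\langle f\rangle_\rho$ as well, and bounded convergence (since $|f|\le \|f\|_\infty$) yields $E^{Q^\infty}[|\bar f_{0,K}-\langle f\rangle_{\eta^K(0)}|]\to 0$.

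Finally, because $\bar f_{0,K}$ and $\langle f\rangle_{\eta^K(0)}$ are bounded continuous functions of the configuration for each fixed $K$, weak convergence transfers the conclusion from $Q^\infty$ back to $\tilde\mu^{N_k}$, and since every weak limit point of $\tilde\mu^N$ satisfies the same bound, the full $\limsup_{N\to\infty}$ is controlled by the $Q^\infty$-quantity, which vanishes as $K\to\infty$. The main obstacle is the identification of every zero-Dirichlet-form translation-invariant measure as a mixture of the $\nu_\rho$: in Case~2 creation is forbidden, so configurations mixing $+$ and $-$ particles cannot be regenerated after annihilation, and one must verify carefully that the only ergodic components surviving are the one-species product measures $\nu_\rho$. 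This is precisely the step where we follow the approach of \cite{F}; once it is in place, the remainder of the argument is essentially routine.
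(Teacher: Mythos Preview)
Your overall strategy (pass to a weak limit of $\tilde\mu^N$, identify it as a mixture of the $\nu_\rho$, then invoke the law of large numbers) is exactly the route the paper takes. However, the specific mechanism you propose for the identification has a genuine gap.

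You claim that with a fully supported product reference $\nu^*$ one has monotonicity of $t\mapsto H(\mu_t^N\mid\nu^*)$ and hence the bound $\int_0^T\sum_b D_b(\mu_t^N,\nu^*)\,dt\le C_0 N^{d-2}$. But relative entropy is monotone along the semigroup only when the reference measure is \emph{invariant}, and in Case~2 (with $C_C=0$) no product measure that charges all three symbols is invariant: annihilation $(1,-1)\to(0,0)$ occurs at rate $C_A>0$ with no reverse transition, so detailed balance fails for any fully supported product law. The ``usual computation'' of $\partial_t H(\mu_t^N\mid\nu^*)$ therefore produces, besides the Dirichlet form, a drift term coming from the non-invariance of $\nu^*$; this term is of the same order as the Dirichlet form and does not vanish, so the bound $D_b(\tilde\mu^N,\nu^*)=O(N^{-2})$ is not justified as written. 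Conversely, the invariant measures $\nu_\rho$ are supported on $\{0,1\}^{\Z^d}$ or $\{-1,0\}^{\Z^d}$, so an arbitrary initial $\mu^N$ need not be absolutely continuous with respect to any of them, and the entropy argument cannot be run with those references either.

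The paper sidesteps this difficulty entirely. Rather than bounding a Dirichlet form, it follows Funaki~\cite{F} and shows directly that any weak limit $\mu$ of $\tilde\mu^N$ satisfies $\mu(Lf)=0$ for every cylinder $f$: from the martingale identity one has $\int_0^T\mu_t^N(L\tau_x f)\,dt=O(N^{-2})$ for each $x$, and averaging over $x$ and $t$ gives $\tilde\mu^N(Lf)\to 0$. No reference measure is needed. From $L$-stationarity together with translation invariance (and the irreversibility of annihilation when $C_C=0$) one concludes that $\mathrm{supp}(\mu)\subset\{-1,0\}^{\Z^d}\cup\{0,1\}^{\Z^d}$; on each piece the dynamics reduce to ordinary simple exclusion, whose translation-invariant stationary measures are mixtures of Bernoulli, and the law of large numbers finishes the proof. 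You already invoke \cite{F} for the identification step; the point is that \cite{F} is needed earlier, to replace the entropy argument you sketched.
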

\begin{proof}
Let $L$ be an operator on $\mathcal{C}$ defined by $(Lf)(\eta) = \sum_{b \in (\Z^d)^*}L_bf(\eta)$, where $(\Z^d)^*$ stands for the set of all directed bonds.
Following the method of the proof of Theorem 4.1 in \cite{F}, it is easy to show that $\{\tilde{\mu^N}\}_N$ is tight in $\mathcal{P}(\chi^d)$ and an arbitrary limit $\mu \in \mathcal{P}(\chi^d)$ satisfies $\mu(Lf)=0$ for every cylinder function $f$. Moreover, by definition, $\mu$ is invariant under spatial translations.
Therefore, we have that support $(\mu) \subset \{-1,0\}^{\Z^d} \cup \{0,1\}^{\Z^d}$. It is known that the translation-invariant $L$-stationary measure on $\{-1,0\}^{\Z^d}$ or $\{0,1\}^{\Z^d}$ is a superposition of Bernoulli product measures. Then, the law of large numbers concludes the proposition.
\end{proof}

Next, we need to prove that the sample density defined microscopically can be replaced in the limit with the macroscopic one. We can use Young measures to complete it by the exactly same way as in \cite{F}. Then, combining with these results, the main theorem will be concluded. The details are omitted.

\section{Proof of Theorem \ref{thm:main3}}
In Case 3, since the hydrodynamic equation is the heat equation, the replacements which are required in Case 2 are unnecessary. So, this is the easiest case to prove the hydrodynamic limit.

First, we consider a class of martingales associated with empirical measure again. Then, as in Case 2, the martingale $M^H(t)$ is rewritten as
\begin{align*}
M^{H}(t) &  = \langle \pi^N_t, H \rangle  -  \langle \pi^N_0, H \rangle - C_E \int^t_0  \langle \pi^N_s, \Delta^N H \rangle ds \\
	& + \frac{C_+ - C_-}{2} \int^t_0 N^{-d}\sum_{x \in \T^d_N} 1_{\{\eta_s(x)=0\}}(\Delta^NH)(\frac{x}{N})ds
\end{align*}
where $\Delta^NH$ represents the discrete Laplacian of $H$.

Following the same argument in Section 3 and 4, all we have to show is that
\begin{equation}
\lim_{N \to \infty}\mathbb{E}_{\mu^N}[\int^t_0 N^{-d}\sum_{x \in \T^d_N} 1_{\{\eta_s(x)=0\}}ds]=0. \label{eq:localergodic3}
\end{equation}

With the notation defined in the last section, (\ref{eq:localergodic3}) is rewritten as
\begin{displaymath}
\lim_{N \to \infty}E^{\tilde{\mu^N}}[1_{\{\eta(0)=0\}}]=0.
\end{displaymath}

Following the method of the proof of Theorem 4.1 in \cite{F} again, it is easy to show that $\{\tilde{\mu^N}\}_N$ is tight in $\mathcal{P}(\chi^d)$ and an arbitrary limit $\mu \in \mathcal{P}(\chi^d)$ satisfies $\mu(Lf)=0$ for every cylinder function $f$. Moreover, by definition, $\mu$ is invariant under spatial translations. Therefore, we have that support $(\mu) \subset \{-1,1\}^{\Z^d}$ and it concludes the theorem.

\section{Weak Solutions of Nonlinear Parabolic Equations}
In this section, we fix the terminology of weak solutions of parabolic equations and present the uniqueness of such equations. Hereafter $\phi:\R \to \R$ is a strictly increasing and Lipschitz continuous function. We consider the Cauchy problem:
\begin{equation}
\left\{
\begin{aligned}
\partial_{t}\rho(t,u) & = \De(\phi(\rho(t,u))) \ \Big(= \sum_{i=1}^d \frac{\partial^2}{\partial u_i^2} \phi(\rho(t,u)) \Big) \\
\rho(0,\cdot) & = \rho_0(\cdot), \label{eq:cauchy}
\end{aligned}
\right.
\end{equation}
and define weak solutions of this Cauchy problem.

\begin{defn}
Fix a bounded initial profile $\rho_0:\T^d \to \R$. A measurable function $\rho \equiv \rho(t)=\rho(t,u) \in C([0,T], \M(\T^d)) \cap L^2([0,T] \times \T^d)$ is a weak solution of the Cauchy problem (\ref{eq:cauchy}) if for every function $H:\T^d \to \R$ of class $C^2(\T^d)$ and for every $0 \le t \le T$
\begin{displaymath}
\int_{\T^d} \ H(u)\rho(t,u)du = \int^t_0 ds \int_{\T^d} \phi(\rho(s,u))\De H(u)du \\
+ \int_{\T^d} \ H(u)\rho_0(u) du.
\end{displaymath}
\end{defn}
We prove the uniqueness of weak solutions in this class.
\begin{thm}\label{thm:uniqueness}
Fix a bounded measurable function $\rho_0:\T^d \to \R$. There exists at most one weak solution of the parabolic equation (\ref{eq:cauchy}).
\end{thm}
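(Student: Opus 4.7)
\emph{Approach.} The plan is to use the classical $H^{-1}$-duality argument (in the spirit of Brezis--Crandall). Let $\rho_1$ and $\rho_2$ be two weak solutions of (\ref{eq:cauchy}) sharing the initial profile $\rho_0$, and set $w := \rho_1 - \rho_2$ and $B := \phi(\rho_1) - \phi(\rho_2)$. The monotonicity of $\phi$ gives the pointwise inequality $wB\ge 0$, while the Lipschitz property gives $|B|\le L|w|$, so that $B\in L^2([0,T]\times\T^d)$. Taking the test function $H\equiv 1$ in the weak formulation of the equation for $w$ yields $\int_{\T^d} w(t,u)\,du = 0$ for every $t$.

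\emph{Choice of test function.} For each $t\in[0,T]$ let $G(t,\cdot)$ be the unique zero-mean solution on the torus of the Poisson equation $-\De G(t,\cdot) = w(t,\cdot)$. Elliptic regularity provides $G\in L^2([0,T],H^2(\T^d))$, and applying $-\De^{-1}$ to both sides of the PDE identifies $\partial_t G(t,\cdot) = -B(t,\cdot) + c(t)$ in the sense of distributions, where $c(t)$ is the spatial constant chosen so as to preserve zero mean. Using a joint space-time mollification of $G$ as test function in the weak formulation (so that the mollification is $C^2$ in $u$ and $C^1$ in $t$), and then letting the regularization parameter tend to zero -- the limit passage being legitimate since $w,B\in L^2([0,T]\times\T^d)$ and $G\in L^2([0,T],H^2)$, $\partial_t G\in L^2([0,T],L^2)$ -- one obtains
\begin{equation*}
\int_{\T^d} G(t,u)\,w(t,u)\,du \;=\; \int_0^t\!\!\int_{\T^d}\partial_s G(s,u)\,w(s,u)\,du\,ds \;+\; \int_0^t\!\!\int_{\T^d}\De G(s,u)\,B(s,u)\,du\,ds.
\end{equation*}

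\emph{Conclusion.} The left-hand side equals $\int_{\T^d}|\nabla G(t,u)|^2\,du$. The first integral on the right equals $-\int_0^t\!\int wB\,du\,ds$ (the contribution of $c(s)$ vanishes because $\int w(s,\cdot)=0$), and the second equals $-\int_0^t\!\int wB\,du\,ds$ since $-\De G = w$. Therefore
\begin{equation*}
\int_{\T^d}|\nabla G(t,u)|^2\,du + 2\int_0^t\!\!\int_{\T^d} w(s,u)B(s,u)\,du\,ds \;=\; 0.
\end{equation*}
Both terms are nonnegative, so each vanishes. In particular $\nabla G(t,\cdot)\equiv 0$, hence $G(t,\cdot)$ is constant, and the zero-mean condition forces $G(t,\cdot)\equiv 0$, so that $w(t,\cdot) = -\De G(t,\cdot) = 0$ for every $t$. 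The only real obstacle is the rigorous justification of using $G$ as a test function: this is standard and relies on density of smooth functions in $L^2([0,T],H^2(\T^d))$, on $C([0,T],\M(\T^d))$ continuity, and on dominated convergence combined with the Lipschitz bound $|B|\le L|w|$.
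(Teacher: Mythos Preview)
Your argument is correct and is precisely the $H^{-1}$-duality proof of Theorem~A.2.4.4 in Kipnis--Landim~\cite{KL}, which is exactly what the paper invokes; the paper's own proof consists only of the sentence ``We can apply the proof of Theorem A.2.4.4 in \cite{KL} straightforwardly.'' Your reconstruction of that argument---solving $-\De G=w$ on the torus, identifying $\partial_t G=-B+c(t)$, and reading off the energy identity $\int|\nabla G(t)|^2+2\int_0^t\!\int wB=0$---is the standard route, and the only technical point (extending the weak formulation from time-independent $H\in C^2(\T^d)$ to the time-dependent mollified $G$) is handled as you indicate.
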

\begin{proof}
We can apply the proof of Theorem A.2.4.4 in \cite{KL} straightforwardly.
\end{proof}

\subsection*{Acknowledgement}
The author would like to thank Professor T. Funaki for helping her with valuable suggestions.


\begin{thebibliography}{99}
\bibitem{CDFG}{\sc P.\ Collet, F.\ Dunlop, D.\ Foster, T.\ Gobron},
{\it Product measures and front dynamics for solid on solid interfaces}, J.\ Stat.\ Phys., {\bf 89} (1997), 509--536.

\bibitem{F}{\sc T.\ Funaki},
{\it Free boundary problem from stochastic lattice gas model}, Ann.\ Inst.\ H.\ Poincar\'e,\ Probab.\ Statis., {\bf 35} (1999), 573--603.

\bibitem{KL}{\sc C.\ Kipnis and C.\ Landim},
{\it Scaling Limits of Interacting Particle Systems}, 
1999, Springer.

\bibitem{KLO}{\sc C.\ Kipnis, C.\ Landim and S.\ Olla},
{\it Hydrodynamic limit for a nongradient system: The generalized symmetric exclusion process}, 
Comm.\ Pure Appl.\ Math., {\bf 47} (1994), 1475--1545.

\bibitem{KV}{\sc C.\ Kipnis, and S.R.S.\ Varadhan},
{\it Central limit theorem for additive functionals of reversible Markov processes and applications to simple exclusion}, 
Comm.\ Math.\ Phys., {\bf 104} (1986), 1--19.

\bibitem{Q}{\sc J.\ Quastel},
{\it Diffusion of color in the simple exclusion process}, 
Comm.\ Pure Appl.\ Math., {\bf 45} (1992), 623--679.

\bibitem{S}{\sc H.\ Spohn},
{\it Large Scale Dynamics of Interacting Particles}, 
1991, Springer.

\end{thebibliography}
\end{document}